\def\Nset{{\mathbb{N}}}
\def\Rset{\mathbb R}
\def\C0{\mathsf{C}_0}
\newcommand{\eg}{{\em e.g.} }
\newcommand{\as}{\text{a.s.} }
\newcommand{\eqdef}{\ensuremath{\stackrel{\mathrm{def}}{=}}}
\def\eqsp{\;}
\newcounter{rmnum}
\newcommand{\un}{\ensuremath{\mathbbm{1}}}
\def\Id{\mathrm{I}}
\newcommand{\gausspdf}[2]{\mathcal{N}(#1,#2)}
\newcommand{\ratio}{r}
\newcommand{\rate}[2][]
{\ifthenelse{\equal{#1}{}}{\ratesymbol(#2)}{\ratesymbol^{(#1)}(#2)}}
\def\Xset{\mathsf{X}} % Espace d 'etat
\def\Xsigma{\mathcal{X}} % tribu
\def\L{\mathcal{L}}
\def\Lsub{\mathcal{M}}
\def\t{\theta}
\def\PE{\ensuremath{\mathbb E}}
\def\PP{\ensuremath{\mathbb P}}
\def\F{\mathcal{F}}
\newcommand{\tvnorm}[1]{\ensuremath{\left\|#1\right\|_{\mathrm{TV}}}}
\newcommand{\fnorm}[2]{\ensuremath{\left|#1\right|_{#2}}}
\newcommand{\supnorm}[1]{| #1 |_{\infty}}
\newcommand{\Vnorm}[2]{\left\| #1 \right\|_{#2}}
\newcommand{\dlim}{\ensuremath{\stackrel{\mathcal{D}}{\longrightarrow}}}
\newcommand{\plim}{\ensuremath{\stackrel{\PP}{\longrightarrow}}}
\newcommand{\aslim}{\ensuremath{\stackrel{\text{a.s.}}{\longrightarrow}}}
\newtheorem{theo}{Theorem}[section]
\newtheorem{lemma}[theo]{Lemma}
\newtheorem{prop}[theo]{Proposition}
\theoremstyle{remark}
\newcounter{hypA}
\newenvironment{hypA}{\refstepcounter{hypA}\begin{itemize}
  \item[{\bf A\arabic{hypA}}]}{\end{itemize}}
\newcounter{hypEE}
\newenvironment{hypEE}{\refstepcounter{hypEE}\begin{itemize}
  \item[{\bf I\arabic{hypEE}}]}{\end{itemize}}
\newcounter{hypAM}
\newcounter{hypU}
\def\Yproc{\ensuremath{\{Y_n \eqsp, n\geq 0\}}}
\def\rmd{\mathrm{d}}
\def\rme{\mathrm{e}}
\def\rmi{\mathrm{i}}
\def\1{\mathbbm{1}}
\def\Tset{\Theta}
\def\Tsigma{\mathcal{T}}
\newcommand{\CPE}[3][]
{\ifthenelse{\equal{#1}{}}{\mathbb{E}\left[\left. #2 \, \right| #3 \right]}{\mathbb{E}_{#1}\left[\left. #2 \, \right | #3 \right]}}
\newcommand{\CPP}[3][]
{\ifthenelse{\equal{#1}{}}{\mathbb{P}\left[\left. #2 \, \right| #3 \right]}{\mathbb{P}_{#1}\left(\left. #2 \, \right | #3 \right)}}
\newcommand{\coint}[1]{\left[#1\right)}
\newcommand{\ocint}[1]{\left(#1\right]}
\newcommand{\ccint}[1]{\left[#1\right]}
\def\operpoisson{\Lambda}
\newcounter{hypoconbis}
\newcounter{saveconbis}
\newcommand\debutA{\begin{list} {\textbf{A\arabic{hypoconbis}}}{\usecounter{hypo
conbis}}\setcounter{hypoconbis}{\value{saveconbis}}}
\newcommand\finA{\end{list}\setcounter{saveconbis}{\value{hypoconbis}}}
\newcommand{\chunk}[4][]%
{\ifthenelse{\equal{#1}{}}{\ensuremath{{#2}_{#3:#4}}}{\ensuremath{#2}_{#1,#3:#4}}
}
\newcommand{\Var}[2][]{\ifthenelse{\equal{#1}{}}{\mathrm{Var} \left[ #2 \right]}{\mathrm{Var}_{#1} \left[ #2 \right]}}
\newcommand{\sequence}[2]{ \left( #1_#2 \right)_{#2 \in \Nset}}
\newcommand{\sequencetwo}[3]{ \left\{ \left( #1_#3, #2_#3 \right)\right\}_{#3 \in \Nset} }
\begin{document}

\begin{frontmatter}

\title{A Central Limit Theorem for Adaptive and Interacting Markov Chains}
\runtitle{A Central Limit Theorem for iMCMC}
           
\begin{aug}
  \author{\fnms{G.}
    \snm{Fort}\thanksref{t1,T1,m1}\ead[label=e1]{gersende.fort@telecom-paristech.fr}},
  \author{\fnms{E.}
    \snm{Moulines}\thanksref{T1,m2}\ead[label=e2]{eric.moulines@telecom-paristech.fr}},
   \author{\fnms{P.}  \snm{Priouret}\thanksref{m3}
    \ead[label=e3]{priouret@ccr.jussieu.fr}} \and
    \author{\fnms{P.} \snm{Vandekerkhove} \ead[label=e4]{pierre.vandek@univ-mlv.fr} \thanksref{m4} }

  \thankstext{t1}{Corresponding author}
  \thankstext{T1}{This
    work is partially supported by the French National Research Agency, under
    the program ANR-08-BLAN-0218 BigMC}
  \runauthor{G. Fort et al.}

  \affiliation{CNRS \& TELECOM ParisTech \thanksmark{m1}\thanksmark{m2}, Univ.
    Pierre et Marie Curie\thanksmark{m3}, Univ. Paris Est\thanksmark{m4}}

  \address{LTCI, TELECOM ParisTech \& CNRS, \\
    46 rue Barrault, \\
    75634 Paris Cedex 13, \\
    France \\ \printead{e1} \\ \printead{e2}}
  \address{LPMA, Univ. Pierre et Marie Curie, \\  Boîte courrier 188 \\ 75252 PARIS Cedex 05, France,\\
    \printead{e3}} \address{LAMA, Univ. Paris Est \\ 5 Bd Descartes
    Champs-sur Marne \\
    77454 Marne-la-Vallée Cedex 2, France.}

\end{aug}

\begin{abstract}
Adaptive and interacting Markov Chains Monte Carlo (MCMC) algorithms are a novel class of non-Markovian algorithms aimed at improving
the simulation efficiency for complicated target distributions. In this paper, we study a general (non-Markovian) simulation framework 
covering both the adaptive and interacting MCMC algorithms. We establish a Central Limit Theorem for additive functionals of unbounded 
functions under a set of verifiable conditions, and identify the asymptotic variance.
Our result extends all the results reported so far.
An application to the interacting tempering algorithm (a simplified version of the equi-energy sampler) is presented to
support our claims.
\end{abstract}

\begin{keyword}[class=AMS]
\kwd[primary ]{65C05, 60F05, 62L10}
\kwd[; secondary ]{65C40, 60J05,93E35}
\end{keyword}

\begin{keyword}
\kwd{MCMC}
\kwd{interacting MCMC}
\kwd{Limit theorems}
\end{keyword}

\end{frontmatter}

\section{Introduction}
\label{sec:Introduction}
Markov chain Monte Carlo (MCMC) methods generate samples from
distributions  known up to a scaling factor.

In the last decade, several non-Markovian simulation algorithms have been
proposed.  In the so-called adaptive MCMC algorithm, the transition kernel of
the MCMC algorithm depends on a finite dimensional \emph{parameter} which is
updated at each iteration from the past values of the chain and the parameters.
The prototypical example is the adaptive Metropolis algorithm, introduced in
\cite{haario:saksman:tamminen:1999} (see \cite{saksman:vihola:2010} and the
references therein for recent references).  Many other examples of adaptive
MCMC algorithms are presented in the survey papers by \cite{andrieu:thoms:2008,rosenthal:2009,atchade:fort:moulines:priouret:2011}.

In the co-called \emph{Interacting MCMC}, several processes are simulated in
parallel, each targeting different distribution.  Each process might interact
with the whole past of its neighboring processes.  A prototypical example is the
equi-energy sampler introduced in \cite{kou:zhou:wong:2006}, where the
different processes target a tempered version of the target distribution. The
convergence of this algorithm has been considered in a series of papers by
\cite{andrieu:jasra:doucet:delmoral:2007},
\cite{andrieu:jasra:doucet:delmoral:2007b},
\cite{andrieu:jasra:doucet:delmoral:2008} and in
\cite{fort:moulines:priouret:2010}.  Different variants of the interacting MCMC
algorithm have been later introduced and studied in
\cite{bercu:delmoral:doucet:2009}, \cite{delmoral:doucet:2010} and
\cite{brockwell:delmoral:doucet:2010}. These algorithms are so far limited to
specific scenarios, and the assumptions used in these papers preclude the
applications of their results in the applications considered in this paper.

The analysis of the convergence of these algorithms is involved. Whereas the basic building blocks 
of these simulation algorithms are Markov kernels, the processes generated by these techniques 
are no longer Markovian. Indeed, each individual process either interacts with its distant past, or the distant 
past of some auxiliary processes.  

The ergodicity and the consistency of additive functionals for adaptive and interacting Markov Chains have been
considered in several recent papers: see \cite{fort:moulines:priouret:2010} and
the references therein. Up to now, there are much fewer works addressing
Central Limit Theorems (CLT).  In \cite{andrieu:moulines:2006} the authors
establish the asymptotic normality of additive functionals for a special class
of adaptive MCMC algorithms in which a finite dimensional parameter is adapted using a
stochastic approximation procedure. Some of the theoretical limitations of \cite{andrieu:moulines:2006} have
been alleviated by \cite{saksman:vihola:2010} for the so-called adaptive Metropolis algorithm, which established a CLT for
additive functionals for the Adaptive Metropolis algorithm (with a proof
specially tailored for this algorithm). The results presented in this
contribution contain as special cases these two earlier results.

The theory for interacting MCMC algorithms is up to now quite limited, despite the clear
potential of this class of methods to sample complicated multimodal target
distributions. The law of large numbers for additive functionals have been
established in \cite{andrieu:jasra:doucet:delmoral:2008b} for some specific
interacting algorithm.  A wider class of interacting Markov chains has been considered in
\cite{delmoral:doucet:2010}. This paper establishes the consistency of a form of interacting 
tempering algorithm and provides non-asymptotic $L^p$-inequalities.
 The assumptions under which the results are derived are restrictive and the results do not cover the
interacting MCMC algorithms considered in this paper. More recently, \cite{fort:moulines:priouret:2010} 
have established the ergodicity and law of large numbers for a wide class of interacting MCMC, under 
the weakest conditions known so far.  

A functional CLT was derived in \cite{bercu:delmoral:doucet:2009} for a
specific class of interacting Markov Chains but their assumptions do not cover
the interactive MCMC considered in this paper (and in particular, the
interacting MCMC algorithm).  A CLT for additive functionals is established by
\cite{atchade:2010} for the interacting tempering algorithm; the proof of the
main result in this paper, Theorem~3.3, contains a serious gap (p.865) which
seems difficult to correct.

This paper aims at providing a theory removing the limitations mentioned above
and covering both adaptive and interacting MCMC in a common unifying framework.
The paper is organized as follows.  In Section~\ref{sec:mainresult} we
establish CLTs for adaptive and interacting MCMC algorithms.  These results are
applied in section~\ref{sec:applicationIT} to the interacting tempering
algorithm which is a simplified version of the Equi-Energy sampler.  All the
proofs are postponed in Section~\ref{sec:proofs}.

\subsection*{Notations}
Let $(\Xset, \Xsigma)$ be a general state space and $P$ be a Markov transition
kernel (see e.g. \cite[Chapter 3]{meyn:tweedie:2009}). $P$ acts on bounded
functions $f$ on $\Xset$ and on $\sigma$-finite positive measures $\mu$ on
$\Xsigma$ via
\[
P f(x) \eqdef \int P(x,\rmd y) f(y) \eqsp, \qquad \mu P (A) \eqdef \int
\mu(\rmd x) P(x,A) \eqsp.
\]
We denote by $P^n$ the $n$-iterated transition kernel defined inductively 
\[
P^n(x,A) \eqdef \int P^{n-1}(x,\rmd y) P(y,A) = \int P(x,\rmd y) P^{n-1}(y,A)
\eqsp;
\]
where $P^0$ is the identity kernel.  For a function $V : \Xset \to \coint{1,+\infty}$, define the $V$-norm of a function $f: \Xset \to \Rset$ by
\[
\fnorm{f}{V} \eqdef \sup_{x \in \Xset} \frac{|f|(x)}{V(x)} \eqsp.
\]
When $V=1$, the $V$-norm is the supremum norm  denoted by
$\supnorm{f}$.  Let $\L_V$ be the set of measurable functions such that $\fnorm{f}{V} <
+\infty $. For $\mu$ a signed  measure on $(\Xset,\Xsigma)$, we defined $\Vnorm{\mu}{V}$ the $V$-norm of $\mu$ 
as 
$$
\Vnorm{\mu}{V}= \sup_{f \in \L_V, \fnorm{f}{V} \leq 1} |\mu(f)| \eqsp.
$$ 
When $V\equiv 1$, the $V$-norm corresponds to the
total variation norm.

For two transition kernels $P_1, P_2$, define the $V$-distance as 
\[
\Vnorm{P_1 -P_2}{V} \eqdef \sup_{x \in \Xset} V^{-1}(x) \Vnorm{P_1(x,\cdot)-P_2(x,\cdot)}{V} \eqsp.
\]
Let $\sequence{x}{n}$ a sequence. For $p \leq q \in \Nset^2$, $\chunk{x}{p}{q}$ denotes the vector $(x_p,\dots,x_q)$.

\section{Main results}
\label{sec:mainresult}
Let $(\Tset, \Tsigma)$ be a measurable space.  Let $\{P_\t, \t \in \Theta\}$ be
a collection of Markov transition kernels on $(\Xset,\Xsigma)$ indexed by a
parameter $\t \in \Theta$.  In the sequel, it is assumed that for any $A \in
\Xsigma$, $(x,\theta) \mapsto P_\theta(x,A)$ is $\Xsigma \otimes \Tsigma/
\mathcal{B}([0,1])$ measurable, where $\mathcal{B}([0,1])$ denotes the Borel
$\sigma$-field.  In the sequel $\Theta$ is not necessarily a finite-dimensional
vector space.  It might be a function space or a space of measures.  We
consider a $\Xset \times \Tset$-valued process $\sequencetwo{X}{\t}{n}$ on a
filtered probability space $(\Omega, \mathcal{A}, \{\F_n, n\geq 0\}, \PP)$.  It
is assumed that
\begin{hypA}
 \label{hyp:DefinitionProc:X:Theta} The process $\sequencetwo{X}{\t}{n}$ is
 $\sequence{\F}{n}$-adapted and for any bounded measurable function $h$,
\begin{equation*}
  \label{eq:DefinitionProc:X:Theta}
\CPE{h(X_{n+1})}{\F_n} = P_{\t_n} h(X_n) \eqsp.
\end{equation*}
\end{hypA}
Assumption \textbf{A\ref{hyp:DefinitionProc:X:Theta}} implies that conditional to the past
(subsumed in the $\sigma$-algebra $\F_n$), the distribution of the next sample
$X_{n+1}$ is governed by the current value $X_n$ and the current parameter
$\theta_n$. This assumption covers any adaptive and interacting MCMC
algorithms; see \cite{andrieu:thoms:2008},
\cite{atchade:fort:moulines:priouret:2011}, \cite{fort:moulines:priouret:2010}
for examples. This assumption on the adaptation of the parameter
$\sequence{\t}{n}$ is quite weak since it only requires the parameter to be
adapted to the filtration. In practice, it frequently occurs that the joint
process $\sequencetwo{X}{\t}{n}$ is Markovian but assumption
\textbf{A\ref{hyp:DefinitionProc:X:Theta}} covers more general adaptation rules.

We assume that the transition kernels $\{P_\t, \t \in \Theta\}$ satisfy a
Lyapunov drift inequality and smallness conditions:
\begin{hypA}
\label{hyp:geometric-ergodicity}
For all $\t \in \Theta$, $P_\t$ is phi-irreducible, aperiodic and there exists a
function $V: \Xset \to \coint{1, +\infty}$, and for any $\t \in
\Theta$ there exist some constants $b_\t \in (1, +\infty), \lambda_\t \in
(0,1)$ such that
\begin{equation*}
P_\t V \leq  \lambda_\t V + b_\t \eqsp.
\end{equation*}
In addition, for any $d \geq 1$ and any $\t \in \Tset$, the level sets $\{ V
\leq d \}$ are $1$-small for $P_\theta$.
\end{hypA}
In many examples considered so far (see \cite{andrieu:moulines:2006},
\cite{saksman:vihola:2010}, \cite{fort:moulines:priouret:2010},
\cite{andrieu:jasra:doucet:delmoral:2008}) this condition is satisfied.  All
the results below can be established under assumptions insuring that the drift
inequality and/or the smallness condition are satisfied for some $m$-iterated
$P^m_\theta$. Note that checking assumption on the iterated kernel $P_\theta^m$
is prone to be difficult because the expression of the $m$-iterated kernel is
most often rather involved.

\textbf{A\ref{hyp:geometric-ergodicity}} implies that, for any $\t \in \Theta$, $P_\t$
possesses an invariant probability distribution $\pi_\t$ and the kernel $P_\t$
is geometrically ergodic~\cite[Chapter 15]{meyn:tweedie:2009}. The following
lemma summarizes the properties of the family $\{P_{\t}, \t \in \Tset\}$
used in  the sequel (see e.g.~\cite{douc:moulines:rosenthal:2004} and references therein).
\begin{lemma}
\label{lem:BoundCandRho}
Assume \textbf{A\ref{hyp:geometric-ergodicity}}. Then for any $\t \in \Theta$, there
exists a probability distribution $\pi_\t$ such that $\pi_\t P_\t = \pi_\t$ and
$\pi_\t(V) \leq b_\t (1-\lambda_\t)^{-1}$. In addition, for any $\alpha \in
\ocint{0,1}$, the following property holds.
\begin{list}{}{}
\item {\bf P[$\alpha$]} For any $\theta \in \Theta$, there exist $C_\t < \infty$ and $\rho_\t \in (0,1 )$
  such that, for any $\gamma \in [\alpha,1]$,
$$\Vnorm{P_\t^n - \pi_\t}{V^\gamma} \leq C_\t \ \rho_\t^n\eqsp.
$$
\end{list}

\end{lemma}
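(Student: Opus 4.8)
The plan is to treat the three assertions of the lemma in order, relying on the classical ergodic theory of $\phi$-irreducible aperiodic Markov chains under a geometric drift condition.

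\emph{Existence of $\pi_\t$ and the moment bound.} Under \textbf{A\ref{hyp:geometric-ergodicity}}, the drift inequality combined with the smallness of the level sets $\{V\le d\}$ implies that $P_\t$ is positive Harris recurrent, hence admits a unique invariant probability $\pi_\t$, and that $P_\t^n(x,\cdot)\to\pi_\t$ in total variation for every $x\in\Xset$ (see e.g.\ \cite[Chapters 13--14]{meyn:tweedie:2009}). Iterating the drift inequality gives, for every $n\ge1$ and $x\in\Xset$,
\[
P_\t^n V(x)\le\lambda_\t^n V(x)+b_\t(1-\lambda_\t)^{-1}.
\]
For fixed $M$, the function $V\wedge M$ is bounded, so $P_\t^n(V\wedge M)(x)\to\pi_\t(V\wedge M)$; since $P_\t^n(V\wedge M)(x)\le P_\t^n V(x)$ and $\lambda_\t^n V(x)\to0$, letting $n\to\infty$ yields $\pi_\t(V\wedge M)\le b_\t(1-\lambda_\t)^{-1}$, and then letting $M\to\infty$ with monotone convergence gives $\pi_\t(V)\le b_\t(1-\lambda_\t)^{-1}$. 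In particular $\pi_\t(V^\gamma)\le\pi_\t(V)<\infty$ for every $\gamma\in\ocint{0,1}$, since $V\ge1$.

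\emph{The case $\gamma=1$.} Because the level sets of $V$ are $1$-small and have positive $\pi_\t$-measure for $d$ large enough (by the moment bound and Markov's inequality), $P_\t$ meets the assumptions of the quantitative geometric ergodicity theorem; see \cite{douc:moulines:rosenthal:2004} or \cite[Chapter 15]{meyn:tweedie:2009}. Hence there exist $\tilde C_\t<\infty$ and $\tilde\rho_\t\in(0,1)$ with $\Vnorm{\delta_x P_\t^n-\pi_\t}{V}\le\tilde C_\t V(x)\tilde\rho_\t^n$ for all $x$ and $n$; this is precisely the $\gamma=1$ instance of \textbf{P[$\alpha$]}. We also record the trivial bound $\Vnorm{\delta_x P_\t^n-\pi_\t}{\tv}\le 2$.

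\emph{The case $\gamma\in[\alpha,1]$ --- the only step needing an argument.} For any finite signed measure $\mu$ and any $W\ge1$ one has $\Vnorm{\mu}{W}=\int W\,\rmd|\mu|$, where $|\mu|$ is the total variation measure (take $f=W$ on the positive part and $f=-W$ on the negative part of the Hahn decomposition). Applying H\"older's inequality with exponents $1/\gamma$ and $1/(1-\gamma)$ to $\int V^\gamma\cdot1\,\rmd|\mu|$ gives the interpolation inequality
\[
\Vnorm{\mu}{V^\gamma}\le\Vnorm{\mu}{V}^\gamma\,\Vnorm{\mu}{\tv}^{1-\gamma},\qquad\gamma\in[0,1].
\]
Taking $\mu=\delta_x P_\t^n-\pi_\t$ and using the two bounds from the previous step,
\[
\Vnorm{\delta_x P_\t^n-\pi_\t}{V^\gamma}\le\bigl(\tilde C_\t V(x)\tilde\rho_\t^n\bigr)^\gamma\,2^{1-\gamma}\le 2\max(1,\tilde C_\t)\,V^\gamma(x)\,\tilde\rho_\t^{\gamma n}.
\]
Dividing by $V^\gamma(x)\ge1$, taking the supremum over $x$, and bounding $\tilde\rho_\t^{\gamma n}\le(\tilde\rho_\t^{\alpha})^n$ (valid since $\gamma\ge\alpha$ and $\tilde\rho_\t<1$) yields \textbf{P[$\alpha$]} with $C_\t\eqdef2\max(1,\tilde C_\t)$ and $\rho_\t\eqdef\tilde\rho_\t^{\alpha}$, neither of which depends on $\gamma$. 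The point requiring care is exactly this uniformity over $\gamma$: bounding $\Vnorm{\delta_x P_\t^n-\pi_\t}{\tv}$ by the constant $2$ (rather than by its $V$-norm bound) keeps the prefactor proportional to $V^\gamma(x)$ instead of $V(x)$, and replacing $\tilde\rho_\t^{\gamma n}$ by $(\tilde\rho_\t^{\alpha})^n$ furnishes a single geometric rate valid on all of $[\alpha,1]$; applying the standard $V^\gamma$-geometric ergodicity theorem separately for each $\gamma$ would instead leave $\gamma$-dependent constants.
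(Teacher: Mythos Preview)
Your proof is correct. The paper does not actually prove this lemma: it simply states it as a summary of standard facts and cites \cite{douc:moulines:rosenthal:2004} and references therein. Your argument supplies a self-contained proof, and the one step that genuinely needs care---the uniformity of $C_\t,\rho_\t$ over $\gamma\in[\alpha,1]$---is handled cleanly by your H\"older interpolation $\Vnorm{\mu}{V^\gamma}\le\Vnorm{\mu}{V}^{\gamma}\Vnorm{\mu}{\tv}^{1-\gamma}$ together with the monotonicity $\tilde\rho_\t^{\gamma}\le\tilde\rho_\t^{\alpha}$. This is exactly the kind of argument the cited quantitative bounds literature uses (Jensen on the drift function for the $V^\gamma$-drift, or interpolation as you do), so your approach is in the same spirit as what the paper defers to, just made explicit.
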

Set \begin{equation}
\label{eq:DefinitionLtheta}
L_\theta  \eqdef C_\t \vee (1-\rho_\t)^{-1} \eqsp.
\end{equation}
It has been shown in \cite{fort:moulines:priouret:2010}, that under appropriate
assumptions, when the sequence $\sequence{\theta}{k}$ converges to
$\theta_\star \in \Tset$ in an appropriate sense, $n^{-1} \sum_{k=1}^n f(X_k)$
converges almost surely to $\pi_{\t_\star}(f)$, for any functions $f$ belonging
to a suitable class of functions $\Lsub$.

The objective of this paper is to derive a CLT for $
n^{-1/2} \sum_{k=1}^n \left\{ f(X_k) - \pi_{\t_\star}(f) \right\}$
for functions$f$ belonging to $\Lsub$. To that goal, consider the following decomposition
\[
n^{-1/2} \sum_{k=1}^n \left\{ f(X_k) - \pi_{\t_\star}(f) \right\}= S_n^{(1)}(f) + S_n^{(2)}(f) \eqsp,
\]
where $S_n^{(1)}(f)$ and $S_n^{(2)}(f)$ are given by
\begin{align}
\label{eq:definition-S_1}
& S_n^{(1)}(f) \eqdef n^{-1/2} \sum_{k=1}^n \left\{ f(X_k) - \pi_{\t_{k-1}}(f) \right\} \eqsp, \\
\label{eq:definition-S_2}
& S_n^{(2)}(f) \eqdef n^{-1/2} \sum_{k=0}^{n-1} \left\{ \pi_{\t_{k}}(f) -
  \pi_{\t_\star}(f) \right\} \eqsp.
\end{align}
We consider these two terms separately.  For the first term, we use a classical technique 
based on the Poisson decomposition; this amounts to write
$S_n^{(1)}(f)$ as the sum of a martingale difference and of a remainder term
converging to zero in probability; see
\cite{andrieu:moulines:2006,atchade:fort:2010,fort:moulines:priouret:2010,delmoral:doucet:2010,saksman:vihola:2010}
for law of large numbers for adaptive and interacting MCMC).  Then we apply a
classical CLT for martingale difference array; see for example \cite[Theorem
3.2]{hall:heyde:1980}.

The second term vanishes when $\pi_\t= \pi_{\theta_\star}$ for all $\t \in \Tset$ which is the case for example, for the adaptive Metropolis
  algorithm~\citep{haario:saksman:tamminen:1999}. In scenarios where $\t \mapsto \pi_\t$ is a non trivial function of
  $\theta$, the weak convergence  $S_n^{(2)}(f)$ relies on conditions which are quite problems specific.

  The application detailed in Section~\ref{sec:applicationIT}, an elementary
  version of the interacting tempering algorithm, is a situation in which
  $\pi_{\t_\star}$ is known but the expression of $\pi_\t$, $\t \neq \t_\star$,
  is unknown, except in very simple examples.  The Wang-Landau
  algorithm~\citep{wang:landau:2001,liang:liu:carroll:2007} is an example of
  adaptive MCMC algorithm in which $\t \mapsto \pi_\t$ is explicit.

  The results in this paper cover the case when the expression of $\pi_\t$ is unknown: we rewrite $S_n^{(2)}(f)$ by using a
  linearization of the fluctuation $\pi_{\t_k}(f) - \pi_{\t_\star}(f)$ in terms
  of the difference $P_{\t_k} - P_{\t_\star}$
\[
\pi_{\t_k}(f) - \pi_{\t_\star}(f) = \pi_{\t_\star} \left(P_{\t_k} -
  P_{\t_\star} \right) \operpoisson_{\t_\star}(f) +\Xi(f,\t_k) \eqsp.
\]
Our approach covers much more general set-up than the one outlined
in~\cite{bercu:delmoral:doucet:2009}.

By \textbf{A\ref{hyp:geometric-ergodicity}}, for any $\alpha \in (0,1)$ and $f \in
\L_{V^\alpha}$, the function $\sum_{n \geq 0} P_{\t}^n \left(f - \pi_{\t}(f)
\right)$ exists and is in $\L_{V^\alpha}$. For $\t \in \Theta$, denote by
$\operpoisson_{\t}: \L_{V^\alpha} \mapsto \L_{V^\alpha}$ the transition kernel
which associates to any function $f \in \L_{V^\alpha}$ the function $
\operpoisson_\t f$ given by:
\begin{equation}
  \label{eq:operateur:poisson}
  \operpoisson_{\t} f \eqdef \sum_{n \geq 0} P_{\t}^n f - \pi_{\t}(f)  \eqsp.
\end{equation}
The function $\operpoisson_{\t} f$ is the solution of the Poisson equation
\begin{equation}
  \label{eq:PoissonEquation}
  \operpoisson_{\t} f - P_\t \operpoisson_{\t} f = f - \pi_\t (f) \eqsp.
\end{equation}
This solution is unique up to an additive constant (see e.g.~\cite[Proposition
17.4.1.]{meyn:tweedie:2009}).

The convergence of $S_n^{(1)}(f)$ is addressed under the following assumptions
which are related to the regularity in the parameter $\t \in \Tset$ of the ergodic behavior of the
kernels $\{P_\t, \t \in \Tset \}$.
\begin{hypA}\label{hyp:cvgProba:series}
  There exist $\alpha \in (0,1/2)$ and a subset of measurable functions
  $\Lsub_{V^\alpha} \subseteq \L_{V^\alpha}$ satisfying the two following conditions
  \begin{enumerate}[(a)]
  \item \label{hyp:cvgProba:series:1} for any  $f
    \in \Lsub_{V^\alpha}$,
\[
n^{-1/2} \sum_{k=1}^n \fnorm{P_{\t_k} \operpoisson_{\t_k}f - P_{\t_{k-1}}
  \operpoisson_{\t_{k-1}}f}{V^{\alpha}} V^{\alpha}(X_k) \plim 0 \eqsp.
\]
\item \label{hyp:cvgProba:series:2} $ n^{-1/2\alpha} \sum_{k=0}^{n-1}
  L_{\theta_{k}}^{2/\alpha} \, P_{\theta_{k}}V(X_{k}) \plim 0$ where $L_\t$ is
  defined by (\ref{eq:DefinitionLtheta}) for the constants $C_{\t}, \rho_{\t}$
  given by P[$\alpha$].
  \end{enumerate}
\end{hypA}
\textbf{A\ref{hyp:cvgProba:series}-\ref{hyp:cvgProba:series:1}} controls the regularity
in the parameter $\t$ of the Poisson solution $\operpoisson_\t f$.
Lemma~\ref{lem:regularity-in-theta:poisson} in Appendix~\ref{appendix} is
useful to check \textbf{A\ref{hyp:cvgProba:series}-\ref{hyp:cvgProba:series:1}}. It
relates the regularity in $\t$ of the function $\t \mapsto P_\t \operpoisson_\t
f$ to the ergodicity constants $C_\t$ and $\rho_\t$ introduced in
Lemma~\ref{lem:BoundCandRho} and to the regularity in $\t$ of the function $\t
\mapsto P_\t$ from the parameter space $\Tset$ to the space of Markov
transition kernels equipped with the $V$-operator norm.

\textbf{A\ref{hyp:cvgProba:series}-\ref{hyp:cvgProba:series:2}} is a kind of containment
condition (see \cite{roberts:rosenthal:2007}): when the ergodic behavior
\textbf{A\ref{hyp:geometric-ergodicity}} is uniform in $\t$ so that $\lambda_\t$, $b_\t$
and the minorization constant of the $P_\t$-smallness condition do not depend
on $\t$, then the constant $L_\t$ does not depend on $\t$ and by
\textbf{A\ref{hyp:DefinitionProc:X:Theta}} and the drift inequality
\textbf{A\ref{hyp:geometric-ergodicity}},
\[
n^{-1/2\alpha} \sum_{k=0}^{n-1} \PE\left[V(X_{k+1}) \right] \leq
n^{1-1/2\alpha} \ \left\{ \PE\left[V(X_0)\right] + (1-\lambda)^{-1} b
\right\} \to 0 \eqsp.
\]
Therefore, condition \textbf{A\ref{hyp:cvgProba:series}-\ref{hyp:cvgProba:series:2}}
holds provided the ergodic constant $L_{\t_k}$ is controlled by a
slowly-increasing function of $k$. Lemma~\ref{lem:explicit:control} in
Appendix~\ref{appendix} provides sufficient conditions to obtain upper bounds
of $\t \mapsto L_\t$ in terms of the constants appearing in the drift
inequality \textbf{A\ref{hyp:geometric-ergodicity}}.

We finally introduce a condition allowing to obtain a closed-form expression for the asymptotic
variance of $S_n^{(1)}(f)$. For $\t \in \Tset$ and $f \in \L_{V^\alpha}$ define
\begin{equation}
\label{eq:definition-gtheta}
F_{\t} \eqdef  P_{\t} (\operpoisson_\t f)^2 - \left[ P_{\t} \operpoisson_\t f\right]^2 \eqsp.
\end{equation}
\begin{hypA} \label{hyp:existence:variance}
  For any $f \in \Lsub_{V^\alpha}$, $ n^{-1} \sum_{k=0}^{n-1} F_{\t_k}(X_k) \plim
  \sigma^2(f)$, where $\sigma^2(f)$ is a deterministic constant.
\end{hypA}
Assumption \textbf{A\ref{hyp:existence:variance}} is typically established by using  the Law of Large
Numbers (LLN) for adaptive and interacting Markov Chain derived in \cite{fort:moulines:priouret:2010}; see also Theorem~\ref{theo:WeakLLN} in
Appendix~\ref{appendix}. Under appropriate regularity conditions on the Markov kernels
$\{P_\t, \t \in \Tset \}$, it is proved that $n^{-1} \sum_{k=0}^{n-1}
\{F_{\t_k}(X_k) - \int \pi_{\t_k}(\rmd x) \, F_{\t_k}(x) \}$ converges in
probability to zero.  The second step consists in showing  that $n^{-1} \sum_{k=0}^{n-1}
\int \pi_{\t_k}(\rmd x) \, F_{\t_k}(x)$ converges to a (deterministic) constant
$\sigma^2(f)$: when $\pi_{\t}$ is not explicitly known and the set $\Xset$ is Polish,
Lemma~\ref{lem:mun:fn:convergence} in Appendix~\ref{appendix} is useful to
check this convergence. In practice, this may introduce a restriction of the
set of functions $f \in \L_{V^\alpha}$ for which this limit holds (see e.g. the
example detailed in Section~\ref{sec:applicationIT} where $\Lsub_{V^\alpha}
\neq \L_{V^\alpha}$).

We can now state conditions upon which $S_n^{(1)}(f)$ is asymptotically normal.
\begin{theo}
\label{theo:CLT:martingale}
Assume \textbf{A\ref{hyp:DefinitionProc:X:Theta}} to \textbf{A\ref{hyp:existence:variance}}. For
any $f \in \Lsub_{V^\alpha}$,
\[
\frac{1}{\sqrt{n}} \sum_{k=1}^n \left\{ f(X_k) -\pi_{\t_{k-1}} (f) \right\}\dlim
\gausspdf{0}{\sigma^2(f)} \eqsp.
\]
\end{theo}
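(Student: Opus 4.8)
The plan is to write $S_n^{(1)}(f)$ as a martingale-difference array plus a remainder that converges to zero in probability, and then invoke the martingale CLT (e.g. \cite[Theorem 3.2]{hall:heyde:1980}). First I would use the Poisson decomposition: for $f \in \Lsub_{V^\alpha} \subseteq \L_{V^\alpha}$, equation~(\ref{eq:PoissonEquation}) gives $f(X_k) - \pi_{\t_{k-1}}(f) = \operpoisson_{\t_{k-1}} f(X_k) - P_{\t_{k-1}} \operpoisson_{\t_{k-1}} f(X_k)$. Writing $g_k \eqdef \operpoisson_{\t_{k-1}} f$, I add and subtract $P_{\t_{k-1}} g_{k}(X_{k-1})$ (which by \textbf{A\ref{hyp:DefinitionProc:X:Theta}} equals $\CPE{g_k(X_k)}{\F_{k-1}}$) to split the sum into a martingale part $M_n \eqdef n^{-1/2}\sum_{k=1}^n \{ g_k(X_k) - P_{\t_{k-1}} g_k(X_{k-1})\}$ and a telescoping-type remainder $R_n \eqdef n^{-1/2}\sum_{k=1}^n \{ P_{\t_{k-1}} g_k(X_{k-1}) - P_{\t_{k-1}} g_k(X_k)\}$. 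Re-indexing $R_n$ as a telescoping sum produces boundary terms $n^{-1/2}(P_{\t_0}g_1(X_0) - P_{\t_{n-1}}g_n(X_n))$, which are $O_{\PP}(n^{-1/2} V^\alpha(X_0)) + O_\PP(n^{-1/2}V^\alpha(X_n))$ and vanish by the drift inequality \textbf{A\ref{hyp:geometric-ergodicity}}, plus a sum $n^{-1/2}\sum_{k} \{P_{\t_{k}}\operpoisson_{\t_k}f - P_{\t_{k-1}}\operpoisson_{\t_{k-1}}f\}(X_k)$ whose absolute value is bounded by the quantity in \textbf{A\ref{hyp:cvgProba:series}-\ref{hyp:cvgProba:series:1}}, hence $\plim 0$.

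It then remains to verify the two hypotheses of the martingale CLT for the array $\xi_{n,k} \eqdef n^{-1/2}\{g_k(X_k) - P_{\t_{k-1}}g_k(X_{k-1})\}$, $1 \le k \le n$, which is a martingale-difference array with respect to $\{\F_k\}$ by construction. The conditional-variance condition is $\sum_{k=1}^n \CPE{\xi_{n,k}^2}{\F_{k-1}} \plim \sigma^2(f)$. A direct computation gives $\CPE{\xi_{n,k}^2}{\F_{k-1}} = n^{-1}\{P_{\t_{k-1}}(\operpoisson_{\t_{k-1}}f)^2(X_{k-1}) - (P_{\t_{k-1}}\operpoisson_{\t_{k-1}}f)^2(X_{k-1})\} = n^{-1} F_{\t_{k-1}}(X_{k-1})$, so the sum is $n^{-1}\sum_{k=0}^{n-1} F_{\t_k}(X_k)$, which converges in probability to $\sigma^2(f)$ by \textbf{A\ref{hyp:existence:variance}}. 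For the conditional Lindeberg condition, $\sum_{k=1}^n \CPE{\xi_{n,k}^2 \un_{\{|\xi_{n,k}| > \epsilon\}}}{\F_{k-1}} \plim 0$, I would control it by a Lyapunov-type argument: for $\beta \in (1, 1/(2\alpha))$ (possible since $\alpha < 1/2$), bound it by $\epsilon^{-(2\beta-2)} n^{-\beta} \sum_{k=1}^n \CPE{|g_k(X_k) - P_{\t_{k-1}}g_k(X_{k-1})|^{2\beta}}{\F_{k-1}}$; using $|g_k|_{V^\alpha} \le C_{\t_{k-1}}(1-\rho_{\t_{k-1}})^{-1}|f|_{V^\alpha} \lesssim L_{\t_{k-1}}|f|_{V^\alpha}$ from P[$\alpha$], convexity, and $P_{\t_{k-1}}V^{2\alpha\beta}(X_{k-1}) \le P_{\t_{k-1}}V(X_{k-1})$ (as $2\alpha\beta < 1$ and $V \ge 1$), this is dominated (up to constants depending on $f$) by $n^{-\beta}\sum_{k=1}^n L_{\t_{k-1}}^{2\beta} P_{\t_{k-1}}V(X_{k-1})$. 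Choosing $\beta$ close enough to $1/(2\alpha)$ so that $2\beta \le 2/\alpha$ and $\beta \ge 1/(2\alpha) - \delta$, this is up to a vanishing factor $n^{1/(2\alpha)-\beta}$ bounded by $n^{-1/(2\alpha)}\sum_{k=0}^{n-1} L_{\t_k}^{2/\alpha} P_{\t_k}V(X_k)$, which $\plim 0$ by \textbf{A\ref{hyp:cvgProba:series}-\ref{hyp:cvgProba:series:2}}. Hence $M_n \dlim \gausspdf{0}{\sigma^2(f)}$, and Slutsky combined with $R_n \plim 0$ finishes the proof.

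The main obstacle is the bookkeeping in the Lindeberg step: one has to juggle the exponent $\beta$ so that simultaneously $2\alpha\beta < 1$ (to absorb powers of $V$ into a single power at most $V$, matching the form in \textbf{A\ref{hyp:cvgProba:series}-\ref{hyp:cvgProba:series:2}}), the normalization $n^{-\beta}$ against the number of terms $n$ leaves a negative net power of $n$ after extracting the $n^{-1/(2\alpha)}$ factor, and the power of $L_{\t_k}$ stays below $2/\alpha$; this is exactly where the restriction $\alpha \in (0,1/2)$ is used. A secondary point requiring care is the justification that $\operpoisson_{\t_{k-1}} f$ is well-defined and lies in $\L_{V^\alpha}$ with the stated norm bound, but this is supplied by \textbf{A\ref{hyp:geometric-ergodicity}}, Lemma~\ref{lem:BoundCandRho} (property P[$\alpha$]) and the remark following~(\ref{eq:operateur:poisson}); likewise the measurability/adaptedness needed to call $\{\xi_{n,k}\}$ a martingale-difference array is immediate from \textbf{A\ref{hyp:DefinitionProc:X:Theta}} and the assumed joint measurability of $(x,\t)\mapsto P_\t(x,A)$.
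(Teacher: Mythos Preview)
Your approach is the same as the paper's: Poisson decomposition into a martingale-increment sum plus a telescoping remainder, the remainder handled by \textbf{A\ref{hyp:cvgProba:series}}, then the Hall--Heyde martingale CLT with the conditional variance supplied by \textbf{A\ref{hyp:existence:variance}} and the Lindeberg condition by \textbf{A\ref{hyp:cvgProba:series}-\ref{hyp:cvgProba:series:2}}. The structure is right, but the Lindeberg bookkeeping has two slips that would make the argument fail as written.

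First, from P[$\alpha$] the Poisson bound is $\fnorm{\operpoisson_\t f}{V^\alpha} \leq L_\t^{2}\,\fnorm{f}{V^\alpha}$ (this is~(\ref{eq:Controle:FctPoisson})), not $L_\t$; so the $(2\beta)$-th moment bound produces $L_{\t_{k-1}}^{4\beta}$, not $L_{\t_{k-1}}^{2\beta}$. Second, and more seriously, with $\beta$ strictly less than $1/(2\alpha)$ the factor $n^{1/(2\alpha)-\beta}$ that you call ``vanishing'' in fact diverges, so the comparison with \textbf{A\ref{hyp:cvgProba:series}-\ref{hyp:cvgProba:series:2}} breaks down. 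The fix is simply to take $\beta = 1/(2\alpha)$ exactly: this is precisely what the paper does, setting $\tau \eqdef 1/\alpha - 2$ and using moments of order $2+\tau = 1/\alpha$. Then $2\alpha\beta = 1$ so $V^{2\alpha\beta}=V$, the exponent on $L_\t$ becomes $4\beta = 2/\alpha$, and the normalization $n^{-\beta}=n^{-1/(2\alpha)}$ matches the hypothesis on the nose. A minor additional point: the second boundary term carries a factor $L_{\t_{n-1}}^2$ in front of $V^\alpha(X_n)$, so the drift inequality \textbf{A\ref{hyp:geometric-ergodicity}} alone does not kill it; the paper invokes \textbf{A\ref{hyp:cvgProba:series}-\ref{hyp:cvgProba:series:2}} here as well.
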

The proof is in section~\ref{sec:proof:theo:CLT:martingale}.  When $\pi_\t =
\pi$ for any $\t$, Theorem~\ref{theo:CLT:martingale} provides sufficient
conditions for a CLT for additive functionals to hold.

When $\pi_\t$ is a function of $\t \in \Tset$, we need now to obtain a joint
CLT for $(S_n^{(1)}(f),S_n^{(2)}(f))$ (see \eqref{eq:definition-S_1} and
\eqref{eq:definition-S_2}).  To that goal, we replace
\textbf{A\ref{hyp:DefinitionProc:X:Theta}} by the following assumption which implies
that, conditionally to the process $\sequence{\theta}{k}$, $\sequence{X}{k}$ is an inhomogeneous Markov chain with transition kernels $(P_{\theta_j},
j \geq 0)$:
\begin{hypA}
\label{hyp:X_conditionellement_markov}
There exists an initial distribution $\nu$ such that for any bounded measurable
function $f: \Xset^{n+1} \to \Rset$,
\[
\CPE{f(\chunk{X}{0}{n})}{\t_{0:n}}= \idotsint \nu(\rmd x_0) f\left(
  \chunk{x}{0}{n} \right) \prod_{j=1}^n P_{\t_{j-1}}(x_{j-1}, \rmd x_j) \eqsp.
\]
\end{hypA}
Assumption  \textbf{A\ref{hyp:X_conditionellement_markov}} is satisfied when $\sequencetwo{X}{\t}{n}$ is an interacting MCMC algorithm.
Note that \textbf{A\ref{hyp:X_conditionellement_markov}} implies \textbf{A\ref{hyp:DefinitionProc:X:Theta}}.

The first step in the proof of the joint CLT consists in linearizing the
difference $\pi_{\t_n} - \pi_{\t_\star}$.  Under
\textbf{A\ref{hyp:geometric-ergodicity}}, $\pi_\t(g)$ exists for any $g \in
\L_{V^\alpha}$ and $\t\in \Theta$ (see Lemma~\ref{lem:BoundCandRho}), and we
have
  \[
  \pi_\t(g) - \pi_{\t_\star}(g) = \pi_\t P_\t g - \pi_{\t_\star}
  P_{\t_\star} g  = \pi_\t \left(P_\t - P_{\t_\star} \right) g + \left( \pi_\t
    - \pi_{\t_\star} \right) P_{\t_\star} g \eqsp,
  \]
  which implies that $\left(\pi_\t- \pi_{\t_\star} \right)\left( \Id -
    P_{\t_\star} \right) g = \pi_\t \left(P_\t - P_{\t_\star} \right) g$.  Let
  $f \in \L_{V^\alpha}$. Then $\operpoisson_{\t_\star} f \in \L_{V^\alpha}$ and
  by applying the previous equality with $g = \operpoisson_{\t_\star} f$, we
  have by (\ref{eq:PoissonEquation})
\begin{equation}
  \label{eq:DL:pi:ordre1}
  \pi_\t(f) - \pi_{\t_\star}(f) = \pi_\t \left(P_\t - P_{\t_\star}
\right)\operpoisson_{\t_\star} f \eqsp.
\end{equation}
We can iterate this decomposition, writing 
\begin{multline*}
\pi_\t(f) - \pi_{\t_\star}(f)= \pi_{\t_\star}  \left(P_\t - P_{\t_\star} \right)\operpoisson_{\t_\star} f
+ 
\\\pi_{\t}\left( \left(P_\t - P_{\t_\star} \right)\operpoisson_{\t_\star} f\right) - \pi_{\t_\star}\left( \left(P_\t - P_{\t_\star} \right)\operpoisson_{\t_\star} f\right)
\end{multline*}
Applying again  (\ref{eq:DL:pi:ordre1}), we obtain
\begin{equation*}
\pi_\t(f) - \pi_{\t_\star}(f) =  \pi_{\t_\star} \left(P_\t - P_{\t_\star} \right)\operpoisson_{\t_\star} f + \pi_\t \left(P_\t - P_{\t_\star} \right)\operpoisson_{\t_\star}\left(P_\t - P_{\t_\star}
  \right)\operpoisson_{\t_\star} f \eqsp.
\end{equation*}
This decomposition can be iterated, which yields
The first term in the RHS of the previous equation is the leading term of the
error $\pi_{\t_k} - \pi_{\t_\star}$, whereas the second term is a remainder.
This decomposition naturally leads to the following assumption.
\begin{hypA}
\label{hyp:variance:TCLsurPi}
For any function $f \in \Lsub_{V^\alpha}$,
 \begin{enumerate}[(a)]
 \item \label{hyp:variance:TCLsurPi:lineaire} there exists a positive constant
   $\gamma^2(f)$ such that
\begin{equation}
\label{eq:definition-gamma}
n^{-1/2} \sum_{k=1}^n     \pi_{\t_\star} \left(P_{\t_k} - P_{\t_\star} \right) \operpoisson_{\t_\star} f
\dlim \gausspdf{0}{\gamma^2(f)} \eqsp.
\end{equation}
\item  \label{hyp:variance:TCLsurPi:reste}
$n^{-1/2} \sum_{k=1}^n \pi_{\t_k} \left(P_{\t_k} - P_{\t_\star} \right)\operpoisson_{\t_\star}
\left(P_{\t_k} - P_{\t_\star}  \right)\operpoisson_{\t_\star} f
\plim 0 $.
\end{enumerate}
\end{hypA}

\begin{theo}
\label{theo:TCL}
Assume \textbf{A\ref{hyp:geometric-ergodicity}} to A\ref{hyp:variance:TCLsurPi}.  For
any function $f \in \Lsub_{V^\alpha}$,
\[
\frac{1}{\sqrt{n}} \sum_{k=1}^n \left\{ f(X_k) -\pi_{\t_\star} (f) \right\}\dlim
\mathcal{N}\left(0, \sigma^2(f) + \gamma^2(f) \right)\eqsp.
\]
\end{theo}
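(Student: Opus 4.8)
The plan is to combine Theorem~\ref{theo:CLT:martingale} (which handles $S_n^{(1)}(f)$) with the linearization of $S_n^{(2)}(f)$ and to glue the two pieces together via a joint martingale-difference CLT. First I would recall the decomposition
\[
\frac{1}{\sqrt n}\sum_{k=1}^n\{f(X_k)-\pi_{\t_\star}(f)\}=S_n^{(1)}(f)+S_n^{(2)}(f),
\]
and then, using \eqref{eq:DL:pi:ordre1} iterated once, rewrite
\[
S_n^{(2)}(f)=n^{-1/2}\sum_{k=1}^n\pi_{\t_\star}\left(P_{\t_k}-P_{\t_\star}\right)\operpoisson_{\t_\star}f
+n^{-1/2}\sum_{k=1}^n\pi_{\t_k}\left(P_{\t_k}-P_{\t_\star}\right)\operpoisson_{\t_\star}\left(P_{\t_k}-P_{\t_\star}\right)\operpoisson_{\t_\star}f + R_n,
\]
where $R_n$ is a negligible boundary term coming from the index shift between $\sum_{k=0}^{n-1}$ in \eqref{eq:definition-S_2} and $\sum_{k=1}^n$ here (the difference is a single term $\pi_{\t_0}(f)-\pi_{\t_\star}(f)$ divided by $\sqrt n$, hence $O(n^{-1/2})$). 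By \textbf{A\ref{hyp:variance:TCLsurPi}-\ref{hyp:variance:TCLsurPi:reste}} the quadratic remainder vanishes in probability, so $S_n^{(2)}(f)$ is asymptotically equivalent to the linear term, call it $T_n(f)\eqdef n^{-1/2}\sum_{k=1}^n\pi_{\t_\star}(P_{\t_k}-P_{\t_\star})\operpoisson_{\t_\star}f$, which by \textbf{A\ref{hyp:variance:TCLsurPi}-\ref{hyp:variance:TCLsurPi:lineaire}} converges to $\gausspdf{0}{\gamma^2(f)}$.

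Next I would revisit the proof of Theorem~\ref{theo:CLT:martingale} to extract the martingale structure: there $S_n^{(1)}(f)=M_n+r_n$ with $r_n\plim0$ and $M_n=n^{-1/2}\sum_{k=1}^n D_k$ a martingale-difference array with respect to $\{\F_k\}$, where $D_k=\operpoisson_{\t_{k-1}}f(X_k)-P_{\t_{k-1}}\operpoisson_{\t_{k-1}}f(X_{k-1})$ (up to the regularity corrections controlled by \textbf{A\ref{hyp:cvgProba:series}}), and $n^{-1}\sum_k\CPE{D_k^2}{\F_{k-1}}\plim\sigma^2(f)$ by \textbf{A\ref{hyp:existence:variance}}. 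The crucial observation is that $T_n(f)$ is $\F_{k-1}$-measurable at step $k$ in the sense that each summand $\pi_{\t_\star}(P_{\t_k}-P_{\t_\star})\operpoisson_{\t_\star}f$ is $\F_k$-measurable (since $\t_k$ is), whereas $D_k$ has conditional mean zero given $\F_{k-1}$ and $X_k$ is the genuinely new randomness. Hence $S_n^{(1)}(f)$ and $T_n(f)$ are driven by asymptotically orthogonal sources: the cross bracket $n^{-1}\sum_k \CPE{D_k}{\F_{k-1}}\cdot(\text{term from }T_n)$ is zero because $\CPE{D_k}{\F_{k-1}}=0$. I would make this precise by applying the Cramér--Wold device: for $(s,t)\in\Rset^2$ consider $sM_n+tT_n(f)$ and show it converges to $\gausspdf{0}{s^2\sigma^2(f)+t^2\gamma^2(f)}$. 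The clean way is to treat $sM_n+tT_n(f)$ itself as (asymptotically) a martingale-difference sum: the $T_n(f)$ part, being adapted and convergent in law to a Gaussian, can by a standard argument (e.g. \cite[Theorem~3.2]{hall:heyde:1980} applied in reverse, or the array CLT with a degenerate martingale part) be represented so that its bracket process is deterministic in the limit with no covariation with the $D_k$. Then the joint quadratic characteristic converges to $s^2\sigma^2(f)+t^2\gamma^2(f)$ and the conditional Lindeberg condition transfers from the two marginal statements; the conclusion $(S_n^{(1)}(f),S_n^{(2)}(f))\dlim\gausspdf{0}{\mathrm{diag}(\sigma^2(f),\gamma^2(f))}$ follows, and adding the two coordinates gives the theorem.

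The main obstacle is establishing the asymptotic independence (zero asymptotic covariance) of $S_n^{(1)}(f)$ and $T_n(f)$ rigorously, because $T_n(f)$ is not assumed to come from any martingale structure — \textbf{A\ref{hyp:variance:TCLsurPi}-\ref{hyp:variance:TCLsurPi:lineaire}} only postulates its marginal weak limit, saying nothing about how it correlates with the $X_k$'s. The resolution I would pursue is to note that for every $k$ the increment of $T_n(f)$ is $\F_k$-measurable while $D_{k+1}$ is a martingale increment with respect to $\F_k$; conditioning on $\F_k$ kills all covariance between $D_{k+1}$ and any $\F_k$-measurable quantity, so for the partial sums one gets $\PE[M_n T_n(f)] = n^{-1}\sum_{j,k}\PE[D_j\,(\text{$T$-increment}_k)] = 0$ by iterated conditioning whenever $j>k$, and an $L^2$/Cauchy--Schwarz bound together with \textbf{A\ref{hyp:cvgProba:series}} controls the finitely many $j\le k$ terms to be $o(1)$. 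Combined with the tightness coming from the two marginal CLTs, uncorrelatedness plus joint Gaussianity of any limit point (which follows because $sM_n+tT_n$ is a sum of a martingale array and an adapted sequence whose own limit is Gaussian) yields asymptotic independence. The rest — verifying the Lindeberg condition for the combined array and identifying the limiting variance — is routine given \textbf{A\ref{hyp:existence:variance}} and the hypotheses on $T_n(f)$.
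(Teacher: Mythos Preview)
Your plan has a genuine gap: you never invoke assumption \textbf{A\ref{hyp:X_conditionellement_markov}}, and without it your asymptotic-independence argument does not go through. The paper's proof hinges on the fact that under \textbf{A\ref{hyp:X_conditionellement_markov}} the $\theta$-process is exogenous to the $X$-process: one conditions on $\F_n^\theta\eqdef\sigma(\theta_k,k\le n)$, pulls the $\F_n^\theta$-measurable factor $\rme^{\rmi u_2 S_n^{(2)}(f)}$ outside, and then proves a \emph{conditional} CLT for $\Xi_n(f)$ given $\F_n^\theta$ (using the filtration $\F_{n,k}=\sigma(\theta_{1:n},X_{1:k})$ and \cite[Theorem~A.3]{douc:moulines:2008}). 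This yields $\CPE{\rme^{\rmi u_1\Xi_n(f)}}{\F_n^\theta}\plim \rme^{-u_1^2\sigma^2(f)/2}$, after which the characteristic function factorizes and \textbf{A\ref{hyp:variance:TCLsurPi}} finishes the job. The additivity of the variance is a direct consequence of this conditional-independence structure, not of any bracket computation.

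Your route via Cram\'er--Wold and orthogonality of $D_j$ with $\F_{j-1}$-measurable quantities breaks in two places. First, the terms with $j\le k$ in $n^{-1}\sum_{j,k}\PE[D_j\tau_k]$ are not ``finitely many'': there are $O(n^2)$ of them, and nothing in \textbf{A\ref{hyp:cvgProba:series}}--\textbf{A\ref{hyp:variance:TCLsurPi}} controls $\PE[D_j\tau_k]$ for $j\le k$ (in the general adaptive setting $\theta_k$ may depend on $X_{0:k}$, hence on $D_j$). Second, and more seriously, even if you did show $\PE[M_nT_n(f)]\to0$, this would give only asymptotic uncorrelatedness, not the joint CLT: your claim that ``joint Gaussianity of any limit point follows because $sM_n+tT_n$ is a sum of a martingale array and an adapted sequence whose own limit is Gaussian'' is precisely what has to be proved, and it is false in general without the exogeneity supplied by \textbf{A\ref{hyp:X_conditionellement_markov}}. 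The assumption \textbf{A\ref{hyp:variance:TCLsurPi}-\ref{hyp:variance:TCLsurPi:lineaire}} is purely marginal and carries no information about the joint law of $(M_n,T_n(f))$.
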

The proof of Theorem~\ref{theo:TCL} is postponed to section~\ref{sec:proof:theo:TCL}.
It is worthwhile to note that, as a consequence of \textbf{A\ref{hyp:X_conditionellement_markov}}, the variance is additive. 
This result extends \cite{bercu:delmoral:doucet:2009} which addresses the case
when $P_{\t}(x,A) = P_\t(A)$ \textit{i.e.} the case when conditionally to the
adaptation process $\sequence{\t}{n}$, the random variables $\sequence{X}{n}$
are independent (see \cite[Eq. (1.4)]{bercu:delmoral:doucet:2009}).
Our result, applied in this simpler situation, yields to the same asymptotic variance.

\section{Application to Interacting Tempering algorithm}
\label{sec:applicationIT}
We consider the simplified version of the
equi-energy sampler~\citep{kou:zhou:wong:2006} introduced in
\cite{andrieu:jasra:doucet:delmoral:2008}. This version is referred to as the
Interacting-tempering (IT) sampler.  Recently, convergence of the marginals and
strong law of large numbers results have been established under general
conditions~(see \cite{fort:moulines:priouret:2010}). In this section, we derive
a CLT under similar assumptions.

Let $\{\pi^{\beta_k}, k \in \{1, \cdots, K\} \}$ be a sequence
  of tempered densities on $\Xset$, where $0<\beta_1 < \cdots < \beta_K=1$. At
  the first level, a process $\sequence{Y}{k}$ with stationary distribution
  proportional to $\pi^{\beta_1}$ is run.  At the second level, a process $\sequence{X}{k}$ with stationary distribution
  proportional to $\pi^{\beta_2}$ is constructed: at each iteration the next value is obtained from a Markov kernel depending on
  the occupation measure of the chain $\sequence{Y}{k}$ up to the current time-step. This $2$-stages mechanism is then repeated to design a process targeting
  $\pi^{\beta_k}$ by using the occupation measure of the process targeting $\pi^{\beta_{k-1}}$.

For ease of exposition, it is assumed that $(\Xset,\Xsigma)$ is a Polish space equipped with its Borel $\sigma$-field,
and the densities are w.r.t. some $\sigma$-finite measure on $(\Xset,\Xsigma)$.
We address the case $K=2$ and discuss below possible  extensions to the case $K>2$.

We start with a description of the IT (case $K=2$).  Denote by $\Tset$ the
set of the probability measures on $(\Xset, \Xsigma)$ equipped with the Borel
sigma-field $\Tsigma$ associated to the topology of weak convergence.  Let $P$ be a transition
kernel on $(\Xset, \Xsigma)$ with unique invariant distribution $\pi$
(typically, $P$ is chosen to be a Metropolis-Hastings kernel). Denote by
$\epsilon \in (0,1)$ the probability of interaction.  Let $\sequence{Y}{k}$
be a discrete-time (possibly non-stationary) process  and denote by $\t_n$ the empirical probability measure:
\begin{equation}
  \label{eq:IT:thetan}
  \t_n \eqdef \frac{1}{n} \sum_{k=1}^n \delta_{Y_k} \eqsp.
\end{equation}
Choose $X_0 \sim \nu$.  At the
$n$-th iteration of the algorithm, two actions may be taken:
\begin{enumerate}
\item with probability $(1-\epsilon)$, the state $X_{n+1}$ is sampled
  from the Markov kernel $P(X_n,\cdot)$,
\item with probability $\epsilon$, a tentative state $Z_{n+1}$ is drawn
  uniformly from the past of the auxiliary process $\{Y_k, k \leq n\}$.  This
  move is accepted with probability $\ratio(X_n,Z_{n+1})$, where the acceptance
  ratio $\ratio$ is given by
\begin{equation}
\label{eq:definition-alpha}
\ratio(x,z) \eqdef 1 \wedge \frac{\pi(z) \pi^{1-\beta}(x)}{\pi^{1-\beta}(z) \pi(x)} = 1 \wedge \frac{\pi^{\beta}(z)}{\pi^{\beta}(x)} \eqsp.
\end{equation}
\end{enumerate}
Define the family of Markov transition kernels $\{P_\t, \t \in \Tset \}$ by
\begin{multline}
\label{eq:definition-Pt-EE}
P_\t(x,A) \eqdef  (1-\epsilon) P(x,A) \\
+ \epsilon \left( \int_A \ratio(x,y) \t(\rmd y) + \1_A(x) \int \left\{ 1-
    \ratio(x,y) \right\} \t(\rmd y) \right) \eqsp.
\end{multline}
Then, the above algorithmic description implies that the bivariate process
$\sequencetwo{X}{\t}{n}$ is such that for any bounded function $h$ on
$\Xset^{n+1}$
\[
\PE\left[h(X_{0:n}) \vert \t_{0:n} \right] = \int \nu(\rmd x_0) P_{\t_0}(x_0,
\rmd x_1) \cdots P_{\t_{n-1}}(x_{n-1}, \rmd x_n) \, h(x_{0:n}) \eqsp.
\]

We apply the results of Section~\ref{sec:mainresult} in order to prove that the
IT process $\sequence{X}{k}$ satisfies a CLT.  To that
goal, it is assumed that the target density $\pi$ and the transition kernel $P$
satisfy the following conditions:
\begin{hypEE}
\label{EES:Pi}
\label{E:BoundPi} $\pi$ is a continuous positive density  on $\Xset $
and $\supnorm{\pi} < +\infty$.
\end{hypEE}
\begin{hypEE}
\label{E:KernelP}
\begin{enumerate}[(a)]
\item \label{E:Irred} $P$ is a phi-irreducible aperiodic Feller transition
  kernel on $(\Xset, \Xsigma)$ such that $\pi P = \pi$.
\item \label{E:Drift} There exist $\tau \in (0,1)$, $\lambda \in (0,1)$ and
  $b<+\infty$  such that
\begin{equation}
\label{eq:definitionW}
P V \leq \lambda V + b  \quad \text{with} \quad
V(x) \eqdef \left( \pi(x) / \supnorm{\pi} \right)^{-\tau} \eqsp.
\end{equation}
\item \label{E:Minor} For any $p \in (0,\supnorm{\pi})$, the sets $\{\pi \geq p \}$ are
  $1$-small (w.r.t. the transition kernel $P$).
\item \label{E:equicontinuite} For any $\gamma \in (0,1/2)$ and any
  equicontinuous set of functions $\F \subseteq \L_{V^\gamma}$, the set of functions $\{P h:
  h \in \F, \fnorm{h}{V^\gamma} \leq 1 \}$ is equicontinuous.
\end{enumerate}
\end{hypEE}
From the expression of the acceptance ratio $r$ (see
Eq.~(\ref{eq:definition-alpha})) and the assumption
I\ref{E:KernelP}-\ref{E:Irred}, it holds
\[
\pi P_{\t_\star} = \pi \eqsp, \] where $\t_\star \propto \pi^{1-\beta}$.
Therefore, when $\t_n$ converges to $\t_\star$, it is expected that $\sequence{X}{k}$ behaves asymptotically as $\pi$;
see~\cite{fort:moulines:priouret:2010}.

Drift conditions for the symmetric random walk Metropolis (SRWM) algorithm are discussed in
\cite{roberts:tweedie:1996}, \cite{jarner:hansen:2000} and
\cite{saksman:vihola:2010}.  Under conditions which imply that the target
density $\pi$ is super-exponential in the tails and have regular contours,
\cite{jarner:hansen:2000} and \cite{saksman:vihola:2010} show that any
functions proportional to $\pi^{-s}$ with $s \in (0,1)$ satisfies a
Foster-Lyapunov drift inequality \cite[Theorems 4.1 and
4.3]{jarner:hansen:2000}.  Under this condition, I\ref{E:KernelP}-\ref{E:Drift}
is satisfied with any $\tau$ in the interval $(0, 1)$. Assumption
I\ref{E:KernelP}-\ref{E:equicontinuite} holds for the SRWM kernel under weak
conditions on the symmetric proposal distribution as shown by the following
lemma. The proof is in section~\ref{sec:proof:lem:equicontinuity:P}.

\begin{lemma}
  \label{lem:equicontinuity:P}
  Assume I\ref{EES:Pi}. Let $P$ be a Metropolis kernel with invariant
  distribution $\pi$ and a symmetric proposal distribution $q: \Xset \times
  \Xset \to \Rset^+$ such that $\sup_{(x,y) \in \Xset^2} q(x,y) < +\infty$ and
  the function $x \mapsto q(x,\cdot)$ is continuous from $(\Xset,|\cdot|)$ to
  the set of probability densities equipped with the total variation norm.
  Then $P$ satisfies I\ref{E:KernelP}-\ref{E:equicontinuite} with any function
  $V \propto \pi^{-\tau}$, $\tau \in \coint{0, 1}$, such that
  $\pi(V) < +\infty$.
\end{lemma}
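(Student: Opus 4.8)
The plan is to verify I\ref{E:KernelP}-\ref{E:equicontinuite} directly from the explicit form of a Metropolis kernel with symmetric proposal $q$. Recall that for such a kernel
\[
Ph(x) = \int q(x,y)\,\alpha(x,y)\,h(y)\,\rmd y + h(x)\Bigl( 1 - \int q(x,y)\,\alpha(x,y)\,\rmd y \Bigr),
\qquad \alpha(x,y) = 1 \wedge \frac{\pi(y)}{\pi(x)} \eqsp,
\]
so that $Ph = Kh + (1-\rho)\,h$ where $K$ is the sub-Markovian kernel with density $(x,y)\mapsto q(x,y)\alpha(x,y)$ and $\rho(x) = \int q(x,y)\alpha(x,y)\,\rmd y$. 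First I would fix $\gamma \in (0,1/2)$ and an equicontinuous family $\F \subseteq \L_{V^\gamma}$ with $\fnorm{h}{V^\gamma}\le 1$, and show that $\{Kh : h \in \F\}$ and $\{(1-\rho)h : h \in \F\}$ are each equicontinuous on a neighbourhood of an arbitrary point $x_0$; equicontinuity of $\{Ph\}$ then follows by the triangle inequality.

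For the first piece, I would write, for $x$ near $x_0$,
\[
Kh(x) - Kh(x_0) = \int \bigl( q(x,y)\alpha(x,y) - q(x_0,y)\alpha(x_0,y)\bigr) h(y)\,\rmd y \eqsp,
\]
bound $|h(y)| \le V^\gamma(y) \propto \pi^{-\tau\gamma}(y)$, and control the integrand in three regimes: continuity of $x \mapsto q(x,\cdot)$ in total variation handles the contribution where $q$ varies; continuity and positivity of $\pi$ (from I\ref{EES:Pi}) makes $x \mapsto \alpha(x,y) = 1 \wedge (\pi(y)/\pi(x))$ continuous and locally bounded away from degeneracies; and the hypothesis $\pi(V) < +\infty$, i.e.\ $\int \pi^{1-\tau}(y)\,\rmd y < +\infty$, together with $\tau\gamma < \tau \le 1$, gives a dominating integrable envelope $C\,\pi^{-\tau\gamma}(y)$ on the region where $\pi(y)$ is bounded above — the tails where $\alpha \equiv 1$ are handled by the TV-continuity of $q$ and $\sup q < \infty$. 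The key point is that the bound obtained is uniform over $h \in \F$ with $\fnorm{h}{V^\gamma}\le 1$ because only $\supnorm{V^{-\gamma}h} \le 1$ enters, not $h$ itself; equicontinuity of $\F$ as such is used only for the diagonal term.

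For the diagonal term I would write $(1-\rho)h(x) - (1-\rho)h(x_0) = (\rho(x_0)-\rho(x))h(x) + (1-\rho(x_0))(h(x)-h(x_0))$: the second summand is small uniformly over $\F$ by equicontinuity of $\F$ and local boundedness of $V^\gamma$, and the first is small because $x\mapsto \rho(x) = \int q(x,y)\alpha(x,y)\,\rmd y$ is continuous (again by TV-continuity of $q$, boundedness $\sup q<\infty$, and continuity of $\alpha$) while $|h(x)| \le V^\gamma(x)$ is locally bounded. Collecting the three estimates yields $\sup_{h\in\F}|Ph(x)-Ph(x_0)| \to 0$ as $x \to x_0$, which is the claimed equicontinuity. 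The main obstacle is the integrability bookkeeping in the off-diagonal term: one must split $\Xset$ according to whether $\pi(y)$ is large or small and check that in each region the envelope $\pi^{-\tau\gamma}$ or $\pi^{1-\tau\gamma}$ times the TV-modulus of $q$ is integrable and tends to $0$ locally uniformly in $x$; this is where the hypotheses $\supnorm{\pi}<\infty$, $\pi$ positive and continuous, $\pi(V)<\infty$, and $\tau \in \coint{0,1}$ are all used.
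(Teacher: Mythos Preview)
Your plan is essentially the paper's own proof: the paper also decomposes $Ph(x)-Ph(x')$ into a part driven by the variation of the acceptance ratio $\ratio$ (bounded via $|\ratio(x,y)-\ratio(x',y)|\le \pi(y)\,|\pi^{-1}(x)-\pi^{-1}(x')|$), a part driven by the variation of $q$ (handled by TV-continuity), and a diagonal part controlled by the equicontinuity of $\F$; the only cosmetic difference is that the paper centers with $h(y)-h(x')$ throughout rather than bounding $|h(y)|\le V^\gamma(y)$ directly. One bookkeeping correction to make before you write it up: the envelope $C\pi^{-\tau\gamma}(y)$ you name is not integrable, and $\{\alpha\equiv 1\}=\{\pi(y)\ge\pi(x)\}$ is the center rather than the tails --- what actually makes both integrals finite (and is what the paper exploits) is that $\ratio(x',y)|h(y)|$ and $\pi(y)|h(y)|$ are \emph{bounded} by a constant times $\pi^{1-\tau\gamma}(y)\le(\sup\pi)^{1-\tau\gamma}$, which is all one needs when integrating against $|q(x,\cdot)-q(x',\cdot)|$ or $q(x,\cdot)$.
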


For a measurable function $f: \Xset \to \Rset$ such that $\t_\star(|f|) <
+\infty$, define the following sequence of random processes on $[0,1]$:
\begin{equation}
\label{eq:definition-S_n}
t \mapsto S_n(f;t)= n^{-1/2} \sum_{j=1}^{\lfloor nt \rfloor} \left\{ f(Y_j) - \t_\star(f) \right\} \eqsp.
\end{equation}
It is assumed that the auxiliary process $\Yproc$ converges to the probability
distribution $\theta_\star$ in the following sense:
\begin{hypEE}
\label{E:ProcY}
\begin{enumerate}[(a)]
\item \label{E:ProcY:item1} $\theta_\star(V) < +\infty$ and $\sup_n
  \PE\left[V(Y_n) \right]< + \infty$.
\item \label{E:ProcY:item2} There exists a space $\mathcal{N}$ of real-valued
  measurable functions defined on $\Xset$ such that $V \in \mathcal{N}$ and for
  any function $f \in \mathcal{N}$, $\t_n(f) \aslim \t_\star(f)$.
\item \label{E:ProcY:item3} For any function $f \in \mathcal{N}$, the sequence
  of processes $( S_n(f,t), n \geq 1, t \in \ccint{0,1})$ converges in distribution to $ (\tilde{\gamma}(f)
  B(t), t \in \ccint{0,1}) $, where $\tilde{\gamma}(f)$ is a non-negative constant and $
  \left( B(t): t \in \ccint{0, 1} \right)$ is a standard Brownian motion.
\item \label{E:ProcY:item4} For any $\alpha \in (0,1/2)$, there exist constants
  $\varrho_0$ and $\varrho_1$ such that, for any integers $n,k \geq 1$, for any
  measurable function $h: \Xset^k \to \Rset$ satisfying $|h(y_1,\dots,y_k)|\leq
  \sum_{j=1}^k V^\alpha(y_j)$,
  \[
  \PE{\left( \idotsint \prod_{j=1}^k \left[\t_n(\rmd y_j) - \t_\star(\rmd
        y_j)\right] h(y_1,\dots,y_k) \right)^2 } \leq A_k \, n^{-k} \eqsp,
  \]
with $\limsup_k \ln A_k /(k \ln k) < \infty$.
\end{enumerate}
\end{hypEE}
  I\ref{E:ProcY} is satisfied when
$\sequence{Y}{k}$ is i.i.d. with distribution $\t_\star$ such that $\t_\star(V)
< +\infty$. In that case, I\ref{E:ProcY}-\ref{E:ProcY:item2} to
I\ref{E:ProcY}-\ref{E:ProcY:item3} hold for any measurable function $f$ such
that $\t_\star(|f|^2) < +\infty$.  I\ref{E:ProcY}-\ref{E:ProcY:item4} is
satisfied using \cite[Lemma~A, pp.  190]{serfling:1980}.

I\ref{E:ProcY} is also satisfied when $\sequence{Y}{k}$ is an asymptotically
stationary Markov chain with transition kernel $Q$. In that case,
I\ref{E:ProcY}-\ref{E:ProcY:item1} to I\ref{E:ProcY}-\ref{E:ProcY:item3} are
satisfied for any measurable function $f$ such that $\t_\star \left( |f \,
  [(I-Q)^{-1} f]| \right) < +\infty$ (see e.g. \cite[Chapter
17]{meyn:tweedie:2009}). Condition I\ref{E:ProcY}-\ref{E:ProcY:item4} for a
(non-stationary) geometrically ergodic Markov chain is established in the
supplementary paper~\citep{fort:moulines:priouret:vdk:2011-supplement}.
  
The following proposition shows that under I\ref{EES:Pi} and I\ref{E:KernelP},
condition \textbf{A\ref{hyp:geometric-ergodicity}} holds with the drift function $V$
given by A\ref{E:KernelP}-\ref{E:Drift}. It also provides a control of the
ergodicity constants $C_\t, \rho_\t$ in Lemma~\ref{lem:BoundCandRho}.  The
proof is a direct consequence of~\cite[Proposition 3.1,
Corollary~3.2]{fort:moulines:priouret:2010}, Lemmas~\ref{lem:BoundCandRho} and
\ref{lem:explicit:control}, and is omitted.
\begin{prop}
  \label{prop:A1:IT}
  Assume I\ref{EES:Pi} and
  I\ref{E:KernelP}\ref{E:Irred}-\ref{E:Drift}-\ref{E:Minor}. For any $\t \in
  \Tset$, $P_\t$ is phi-irreducible, aperiodic. In addition, there exist
  $\tilde \lambda \in (0,1)$ and $\tilde b < +\infty$ such that, for any $\t
  \in \Tset$,
\begin{equation}\label{eq:prop:A1:IT:drift}
P_\t V(x) \leq \tilde \lambda  V(x) + \tilde b \, \t(V) \eqsp, \quad \text{for all $x \in \Xset$.}
\end{equation}
The property P[$\alpha$] holds for any $\alpha \in (0,1/2)$, and there exists
$C$ such that for any $\t \in \Tset$, $L_\t \leq C \t(V)$.

Assume in addition I\ref{E:ProcY}\ref{E:ProcY:item1} and $\PE[V(X_0)]< +
\infty$. Then, $\sup_{n \geq 0} \PE\left[V(X_n) \right] < +\infty$.
\end{prop}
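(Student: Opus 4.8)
The plan is to verify the ingredients one at a time, relying on the cited results from \cite{fort:moulines:priouret:2010} and on Lemmas~\ref{lem:BoundCandRho}--\ref{lem:explicit:control}. First I would establish phi-irreducibility and aperiodicity of $P_\t$: since $P_\t(x,\cdot) \geq (1-\epsilon) P(x,\cdot)$ by \eqref{eq:definition-Pt-EE}, and $P$ is phi-irreducible and aperiodic by I\ref{E:KernelP}-\ref{E:Irred}, the kernel $P_\t$ inherits both properties (any set charged by the irreducibility measure of $P$ is reachable under $P_\t$, and the minorization by $(1-\epsilon)P$ rules out periodicity). This is the easy part and is essentially \cite[Proposition 3.1]{fort:moulines:priouret:2010}.

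Next comes the drift inequality \eqref{eq:prop:A1:IT:drift}. Using $P_\t V(x) = (1-\epsilon) PV(x) + \epsilon \int \{ r(x,y) V(y) + (1-r(x,y)) V(x)\}\,\t(\rmd y)$, I would bound the $P$-part by $(1-\epsilon)(\lambda V(x) + b)$ via I\ref{E:KernelP}-\ref{E:Drift}, and bound the interaction part by $V(x) + \int V(y)\,\t(\rmd y) = V(x) + \t(V)$ crudely (dropping $r \le 1$). Combining gives $P_\t V \le (1-\epsilon)\lambda V + \epsilon V + (1-\epsilon)b + \epsilon\,\t(V)$, which has the wrong shape unless the coefficient $(1-\epsilon)\lambda + \epsilon$ is $<1$; since $\lambda < 1$ and $\epsilon < 1$ this coefficient is indeed $(1-\epsilon)\lambda + \epsilon = 1 - (1-\epsilon)(1-\lambda) < 1$, so one sets $\tilde\lambda \eqdef 1 - (1-\epsilon)(1-\lambda)$ and $\tilde b \eqdef (1-\epsilon)b \vee \epsilon$, absorbing the constant $(1-\epsilon)b$ into $\tilde b\,\t(V)$ using $\t(V) \ge 1$ (which holds since $V \ge 1$). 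The uniformity in $\t$ is automatic because none of $\lambda, b, \epsilon$ depend on $\t$.

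For the smallness condition and property P[$\alpha$]: the level sets $\{V \le d\}$ coincide with sets of the form $\{\pi \ge p\}$ for suitable $p$ (by the definition $V = (\pi/\supnorm\pi)^{-\tau}$), which are $1$-small for $P$ by I\ref{E:KernelP}-\ref{E:Minor}; the minorization $P_\t \ge (1-\epsilon) P$ transfers $1$-smallness to $P_\t$ (with the minorizing measure scaled by $1-\epsilon$). Thus \textbf{A\ref{hyp:geometric-ergodicity}} holds, and Lemma~\ref{lem:BoundCandRho} yields P[$\alpha$] for every $\alpha \in (0,1/2)$. To get the explicit bound $L_\t \le C\,\t(V)$, I would invoke Lemma~\ref{lem:explicit:control}: the drift \eqref{eq:prop:A1:IT:drift} has the form $P_\t V \le \tilde\lambda V + \tilde b\,\t(V)$ with a drift constant proportional to $\t(V)$ and all other quantities ($\tilde\lambda$, the smallness level, the minorization constant) uniform in $\t$, so the lemma delivers $C_\t$ and $(1-\rho_\t)^{-1}$ bounded by a fixed multiple of $\t(V)$, hence $L_\t = C_\t \vee (1-\rho_\t)^{-1} \le C\,\t(V)$.

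Finally, for $\sup_n \PE[V(X_n)] < \infty$ under the additional hypotheses: taking expectations in the drift \eqref{eq:prop:A1:IT:drift} and using \textbf{A\ref{hyp:X_conditionellement_markov}} (which holds here, as noted after \eqref{eq:definition-Pt-EE}) gives $\PE[V(X_{n+1})] \le \tilde\lambda\,\PE[V(X_n)] + \tilde b\,\PE[\t_n(V)]$; since $\t_n(V) = n^{-1}\sum_{k=1}^n V(Y_k)$, I\ref{E:ProcY}-\ref{E:ProcY:item1} gives $\sup_n \PE[\t_n(V)] \le \sup_k \PE[V(Y_k)] < \infty$, so a routine induction on the recursion with contraction factor $\tilde\lambda < 1$ yields $\sup_n \PE[V(X_n)] \le \PE[V(X_0)] \vee \{\tilde b (1-\tilde\lambda)^{-1} \sup_n \PE[\t_n(V)]\} < \infty$. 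I expect the only genuinely delicate point to be checking that Lemma~\ref{lem:explicit:control} applies with the $\t$-dependent drift constant $\tilde b\,\t(V)$ — i.e. that the ergodicity constants degrade only linearly in $\t(V)$ — but this is exactly the situation that lemma is designed for, so the argument reduces to matching hypotheses.
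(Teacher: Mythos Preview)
Your proposal is essentially correct and follows the same route as the paper, which in fact omits the proof entirely and simply attributes it to \cite[Proposition~3.1, Corollary~3.2]{fort:moulines:priouret:2010} together with Lemmas~\ref{lem:BoundCandRho} and~\ref{lem:explicit:control}. Your outline is a faithful unpacking of what those citations contain.

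Two minor points. First, your choice $\tilde b = (1-\epsilon)b \vee \epsilon$ does not quite work: with that value the inequality $(1-\epsilon)b + \epsilon\,\t(V) \le \tilde b\,\t(V)$ can fail (take $\t(V)=1$ and $(1-\epsilon)b > \epsilon$). You need $\tilde b = (1-\epsilon)b + \epsilon$ (or any larger constant). Second, you correctly identify the delicate step as the linear bound $L_\t \le C\,\t(V)$. Note that Lemma~\ref{lem:explicit:control} as stated yields only $L_\t \le C\{b_\t \vee \delta_\t^{-1} \vee (1-\lambda_\t)^{-1}\}^\gamma$ for some exponent $\gamma>0$; moreover, the small-set level $c_\t = 2\tilde b\,\t(V)(1-\tilde\lambda)^{-1}-1$ grows with $\t(V)$, so the minorization constant $\delta_\t$ is not a priori uniform in $\t$. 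The linear bound stated in the proposition therefore relies on the sharper, model-specific analysis of \cite[Corollary~3.2]{fort:moulines:priouret:2010} for this particular family of kernels, rather than on Lemma~\ref{lem:explicit:control} alone. (For the downstream arguments in the paper, any polynomial bound $L_\t \le C\,\t(V)^\gamma$ would actually suffice, since one only uses $\limsup_n L_{\t_n} < \infty$ a.s., which follows from $\limsup_n \t_n(V) < \infty$ a.s.)
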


The next step is to check assumptions A\ref{hyp:cvgProba:series} and
\textbf{A\ref{hyp:existence:variance}}.

\begin{prop}
\label{prop:A3:A4:ICMCM}
Assume I\ref{EES:Pi}, I\ref{E:KernelP},
I\ref{E:ProcY}\ref{E:ProcY:item1}-\ref{E:ProcY:item2} and $\PE[V(X_0)]< +
\infty$.  For any $\alpha \in (0,1/2)$, set $\Lsub_{V^\alpha}$ be the set of
continuous functions belonging to $\L_{V^\alpha} \cap \mathcal{N}$. Then, for
any $\alpha \in (0,1/2)$, the conditions A\ref{hyp:cvgProba:series} and
\textbf{A\ref{hyp:existence:variance}} hold with
\begin{equation}
  \label{eq:IT:variance1}
  \sigma^2(f) \eqdef \int \pi_{\t_\star}(\rmd x)  F_{\t_\star}(x) \eqsp,
\end{equation}
where $F_\t$ is given by (\ref{eq:definition-gtheta}).
\end{prop}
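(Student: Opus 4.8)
The goal is to verify assumptions \textbf{A\ref{hyp:cvgProba:series}} and \textbf{A\ref{hyp:existence:variance}} for the IT sampler, so the plan splits into three independent pieces, one for each of the conditions \textbf{A\ref{hyp:cvgProba:series}-\ref{hyp:cvgProba:series:1}}, \textbf{A\ref{hyp:cvgProba:series}-\ref{hyp:cvgProba:series:2}}, and \textbf{A\ref{hyp:existence:variance}}. Throughout I will use Proposition~\ref{prop:A1:IT}, which gives the drift \eqref{eq:prop:A1:IT:drift}, property P[$\alpha$] for every $\alpha \in (0,1/2)$, the bound $L_\t \leq C\,\t(V)$, and $\sup_n \PE[V(X_n)] < +\infty$.

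For \textbf{A\ref{hyp:cvgProba:series}-\ref{hyp:cvgProba:series:2}}, note first that $\t_n(V) = n^{-1} \sum_{k=1}^n V(Y_k)$, which converges a.s.\ to $\t_\star(V)$ by I\ref{E:ProcY}-\ref{E:ProcY:item2} (since $V \in \mathcal{N}$), hence $\sup_n \t_n(V) < \infty$ a.s.; combined with $L_{\t_k} \leq C\,\t_k(V)$ this shows $\sup_k L_{\t_k} < \infty$ a.s. Then $n^{-1/2\alpha} \sum_{k=0}^{n-1} L_{\t_k}^{2/\alpha} P_{\t_k} V(X_k)$ is bounded above by $(\sup_k L_{\t_k})^{2/\alpha}\, n^{-1/2\alpha} \sum_{k=0}^{n-1} P_{\t_k} V(X_k)$; using the drift \eqref{eq:prop:A1:IT:drift}, $\PE[P_{\t_k} V(X_k)] \leq \tilde\lambda\,\PE[V(X_k)] + \tilde b\,\PE[\t_k(V)]$, which is uniformly bounded, so the sum is $O(n^{1-1/2\alpha})$ in $L^1$, and since $\alpha < 1/2$ the exponent $1 - 1/2\alpha < 0$; this gives convergence to zero in probability. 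The only delicate point is handling the a.s.-finite but random factor $\sup_k L_{\t_k}$ together with the $L^1$ control of the sum — this is routine (split on the event $\{\sup_k L_{\t_k} \leq M\}$ and let $M \to \infty$).

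For \textbf{A\ref{hyp:cvgProba:series}-\ref{hyp:cvgProba:series:1}}, the strategy is to invoke Lemma~\ref{lem:regularity-in-theta:poisson} of the appendix, which bounds $\fnorm{P_\t \operpoisson_\t f - P_{\t'}\operpoisson_{\t'} f}{V^\alpha}$ in terms of the ergodicity constants $C_\t, C_{\t'}, \rho_\t, \rho_{\t'}$ and the $V$-operator-norm distance $\Vnorm{P_\t - P_{\t'}}{V^\beta}$ for a suitable $\beta > \alpha$. From \eqref{eq:definition-Pt-EE}, $P_\t - P_{\t'}$ acts only through the interaction part, so $\Vnorm{P_{\t_k} - P_{\t_{k-1}}}{V^\beta}$ is controlled by the discrepancy between $\t_k$ and $\t_{k-1}$ integrated against functions dominated by $V^\beta$; since $\t_k - \t_{k-1} = \frac{1}{k}(\delta_{Y_k} - \t_{k-1})$, this difference is $O(V^\beta(Y_k)/k + \t_{k-1}(V^\beta)/k)$, i.e.\ $O(1/k)$ up to factors that are a.s.\ bounded (using I\ref{E:ProcY}-\ref{E:ProcY:item1}-\ref{E:ProcY:item2} with $\beta < 1$ so $V^\beta \leq V$). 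Combined with the a.s.\ boundedness of $L_{\t_k}$ (hence of $C_{\t_k}$ and of $(1-\rho_{\t_k})^{-1}$), this yields $\fnorm{P_{\t_k}\operpoisson_{\t_k} f - P_{\t_{k-1}}\operpoisson_{\t_{k-1}} f}{V^\alpha} = O(1/k)$ a.s.; then $n^{-1/2} \sum_{k=1}^n O(1/k)\, V^\alpha(X_k)$ has $L^1$-type control $n^{-1/2}\sum_{k=1}^n k^{-1}\PE[V^\alpha(X_k)] = O(n^{-1/2}\log n) \to 0$, so the sum vanishes in probability. The main obstacle here is the bookkeeping: one must carry the random, a.s.-bounded ergodicity constants through the bound from Lemma~\ref{lem:regularity-in-theta:poisson} and argue that their presence does not spoil the $L^1$-control argument — again handled by conditioning on $\{\sup_k L_{\t_k} \leq M\}$ and the continuity of $\t \mapsto P_\t$ (for which I\ref{E:KernelP}-\ref{E:equicontinuite} and the continuity of $\pi$ are needed, restricting to $f$ continuous, which is why $\Lsub_{V^\alpha}$ is taken to be continuous functions in $\L_{V^\alpha} \cap \mathcal{N}$).

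For \textbf{A\ref{hyp:existence:variance}}, I decompose $n^{-1}\sum_{k=0}^{n-1} F_{\t_k}(X_k)$ into $n^{-1}\sum_{k=0}^{n-1}\{F_{\t_k}(X_k) - \pi_{\t_k}(F_{\t_k})\}$ plus $n^{-1}\sum_{k=0}^{n-1}\pi_{\t_k}(F_{\t_k})$, following the two-step scheme described after \textbf{A\ref{hyp:existence:variance}}. The first term goes to zero in probability by the weak LLN for interacting chains, Theorem~\ref{theo:WeakLLN} in the appendix — one checks its hypotheses using Proposition~\ref{prop:A1:IT} and the fact that $F_\t \in \L_{V^{2\alpha}}$ with $2\alpha < 1$ (so $F_\t$ is dominated by $V$), together with regularity of $\t \mapsto F_\t$ inherited from regularity of $\t \mapsto P_\t$ and $\t \mapsto \operpoisson_\t f$. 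For the second term, since $\t_n \to \t_\star$ a.s.\ in the weak topology, it suffices to show $\t \mapsto \pi_\t(F_\t)$ is continuous at $\t_\star$ in this topology, so that Cesàro averaging gives the limit $\pi_{\t_\star}(F_{\t_\star}) = \sigma^2(f)$; here Lemma~\ref{lem:mun:fn:convergence} of the appendix is the right tool (this is exactly the situation where $\pi_\t$ is not explicit), and the restriction to continuous $f$ and to $\F := \{P h : h \in \F_0, \fnorm{h}{V^\gamma}\leq 1\}$ being equicontinuous (I\ref{E:KernelP}-\ref{E:equicontinuite}) is what makes the map $\t \mapsto \operpoisson_\t f$, and hence $\t \mapsto F_\t$, jointly well-behaved enough to pass to the limit. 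The hard part is this last continuity/uniform-integrability argument for $\pi_\t(F_\t)$, since $F_\t$ itself depends on $\t$; it requires combining the equicontinuity hypothesis, the uniform (in $\t$ on weakly-convergent sequences) ergodicity bounds, and a uniform-integrability estimate coming from $\sup_n \PE[V(X_n)] < \infty$ and $\pi_\t(V) \leq \tilde b\,\t(V)/(1-\tilde\lambda)$.
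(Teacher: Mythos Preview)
Your overall strategy matches the paper's proof exactly: verify \textbf{A\ref{hyp:cvgProba:series}-\ref{hyp:cvgProba:series:1}} via Lemma~\ref{lem:regularity-in-theta:poisson} and a bound on $D_{V^\alpha}(\t_k,\t_{k-1})$; verify \textbf{A\ref{hyp:cvgProba:series}-\ref{hyp:cvgProba:series:2}} from $\sup_k L_{\t_k}<\infty$ a.s.\ and the drift; and verify \textbf{A\ref{hyp:existence:variance}} by the two-step scheme (Theorem~\ref{theo:WeakLLN} for the centering, Lemma~\ref{lem:mun:fn:convergence} for the limit of $\pi_{\t_k}(F_{\t_k})$). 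Two concrete points need correction.

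First, in your argument for \textbf{A\ref{hyp:cvgProba:series}-\ref{hyp:cvgProba:series:1}} you assert that $D_{V^\alpha}(\t_k,\t_{k-1})=O(1/k)$ ``up to factors that are a.s.\ bounded''. The factor $\t_{k-1}(V^\alpha)$ is indeed a.s.\ bounded, but $V^\alpha(Y_k)$ is \emph{not} a.s.\ bounded in $k$, so the $O(1/k)$ claim fails. The paper's fix (Lemma~\ref{lem:bienpratique}) is to note that $\t_k(V)\to\t_\star(V)$ a.s.\ forces $k^{-1}V(Y_k)\to 0$ a.s., hence $k^{-\alpha}V^\alpha(Y_k)$ is a.s.\ bounded; this gives only $D_{V^\alpha}(\t_k,\t_{k-1})=O(k^{\alpha-1})$ a.s., and then one controls $n^{-1/2}\sum_{k=1}^n k^{\alpha-1}\PE[V^\alpha(X_k)]$, which still vanishes since $\alpha<1/2$. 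Your $L^1$ argument as written does not go through without this adjustment. (Incidentally, Lemma~\ref{lem:regularity-in-theta:poisson} uses $D_{V^\alpha}$ directly, not $D_{V^\beta}$ with $\beta>\alpha$.)

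Second, you attribute the restriction to continuous $f$ to the verification of \textbf{A\ref{hyp:cvgProba:series}-\ref{hyp:cvgProba:series:1}}. In the paper's proof continuity plays no role there; \textbf{A\ref{hyp:cvgProba:series}} holds for any $f\in\L_{V^\alpha}$. Continuity (together with I\ref{E:KernelP}-\ref{E:equicontinuite}) is needed only in step~2 of \textbf{A\ref{hyp:existence:variance}}, to obtain equicontinuity of $\{H_{\t_n}\}$ and pointwise convergence $H_{\t_n}(x)\to H_{\t_\star}(x)$, which are the inputs to Lemma~\ref{lem:mun:fn:convergence}. Your outline of that step is otherwise accurate, including the identification of the equicontinuity/uniform-integrability verification as the substantive part.
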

The proof is postponed to Appendix~\ref{sec:proof:prop:A3:A4:ICMCM}. We can now
apply Theorem~\ref{theo:TCL} and prove a CLT for the $2$-levels IT.
\begin{theo}
\label{theo:IT}
  Assume I\ref{EES:Pi}, I\ref{E:KernelP}, I\ref{E:ProcY} and $\PE[V(X_0)]< +
  \infty$. Then, for any $\alpha \in (0,1/2)$ and any continuous function $f
  \in \L_{V^\alpha} \cap \mathcal{N}$,
\[
\frac{1}{\sqrt{n}} \sum_{k=1}^n \left( f(X_k) - \pi_{\t_\star}(f) \right) \dlim
\mathcal{N}(0, \sigma^2(f) + 2 \tilde \gamma^2(f)) \eqsp,
\]
where $\sigma^2(f)$ and $\tilde \gamma^2(f)$ are given by
(\ref{eq:IT:variance1}) and I\ref{E:ProcY}-\ref{E:ProcY:item3}.
\end{theo}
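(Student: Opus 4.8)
The plan is to deduce Theorem~\ref{theo:IT} by applying Theorem~\ref{theo:TCL} to the IT process, so the bulk of the work is verifying the hypotheses \textbf{A\ref{hyp:geometric-ergodicity}}--\textbf{A\ref{hyp:variance:TCLsurPi}} in this setting. Assumption \textbf{A\ref{hyp:X_conditionellement_markov}} holds by construction of the IT sampler: the algorithmic description gives exactly the stated conditional-Markov structure of $\chunk{X}{0}{n}$ given $\t_{0:n}$, with initial law $\nu$ and transition kernels $P_{\t_j}$ from \eqref{eq:definition-Pt-EE}. Assumption \textbf{A\ref{hyp:geometric-ergodicity}} together with the explicit control $L_\t \leq C\,\t(V)$ and $\sup_n \PE[V(X_n)] < \infty$ is furnished by Proposition~\ref{prop:A1:IT} under I\ref{EES:Pi} and I\ref{E:KernelP}. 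Assumptions \textbf{A\ref{hyp:cvgProba:series}} and \textbf{A\ref{hyp:existence:variance}}, with the identification $\sigma^2(f)=\int \pi_{\t_\star}(\rmd x)\,F_{\t_\star}(x)$, are exactly the content of Proposition~\ref{prop:A3:A4:ICMCM}, valid for the class $\Lsub_{V^\alpha}=\{$continuous functions in $\L_{V^\alpha}\cap\mathcal{N}\}$. Hence only \textbf{A\ref{hyp:variance:TCLsurPi}}\ref{hyp:variance:TCLsurPi:lineaire}--\ref{hyp:variance:TCLsurPi:reste} remain to be checked, and the asymptotic variance will come out as $\sigma^2(f)+\gamma^2(f)$ with $\gamma^2(f)=2\tilde\gamma^2(f)$.

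For \textbf{A\ref{hyp:variance:TCLsurPi}}\ref{hyp:variance:TCLsurPi:lineaire}, the key observation is that the linearized term has an explicit form because $P_{\t_k}-P_{\t_\star}$ acts linearly in the measure argument: from \eqref{eq:definition-Pt-EE},
\[
\left(P_{\t} - P_{\t_\star}\right) g(x) = \epsilon \int \left\{ g(y) - g(x) \right\} \ratio(x,y)\,(\t - \t_\star)(\rmd y) \eqsp,
\]
so that, writing $g = \operpoisson_{\t_\star} f$ and $h_{x}(y) \eqdef \epsilon\{g(y)-g(x)\}\ratio(x,y)$, we get
\[
n^{-1/2}\sum_{k=1}^n \pi_{\t_\star}\left(P_{\t_k}-P_{\t_\star}\right)\operpoisson_{\t_\star} f
= n^{-1/2}\sum_{k=1}^n (\t_k - \t_\star)(\phi)
\]
for the \emph{fixed} function $\phi(y) \eqdef \int \pi_{\t_\star}(\rmd x)\, h_x(y) = \epsilon\int \pi_{\t_\star}(\rmd x)\{g(y)-g(x)\}\ratio(x,y)$. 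Recalling $\t_k = k^{-1}\sum_{j=1}^k \delta_{Y_j}$, an Abel summation (partial summation) converts $n^{-1/2}\sum_{k=1}^n(\t_k-\t_\star)(\phi) = n^{-1/2}\sum_{k=1}^n k^{-1}\sum_{j=1}^k\{\phi(Y_j)-\t_\star(\phi)\}$ into a functional of the partial-sum process $S_n(\phi;\cdot)$ of \eqref{eq:definition-S_n}; indeed $n^{-1/2}\sum_{k=1}^n(\t_k-\t_\star)(\phi) = \int_0^1 t^{-1} S_n(\phi;t)\,\rmd t + o_{\PP}(1)$ (up to boundary corrections handled by I\ref{E:ProcY}\ref{E:ProcY:item1}). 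Provided $\phi \in \mathcal{N}$ (which must be argued from the continuity of $g=\operpoisson_{\t_\star}f$, I\ref{E:KernelP}\ref{E:equicontinuite}, boundedness of $\ratio$, and $f\in\mathcal{N}$ — this is where closure properties of $\mathcal{N}$ enter), I\ref{E:ProcY}\ref{E:ProcY:item3} gives $S_n(\phi;\cdot)\dlim \tilde\gamma(\phi)B(\cdot)$, and the continuous mapping theorem applied to the functional $w\mapsto\int_0^1 t^{-1}w(t)\,\rmd t$ yields convergence to $\tilde\gamma(\phi)\int_0^1 t^{-1}B(t)\,\rmd t$, which is centered Gaussian with variance $\tilde\gamma^2(\phi)\cdot 2$ since $\mathrm{Var}\left(\int_0^1 t^{-1}B(t)\,\rmd t\right) = 2\int_0^1\int_0^s s^{-1}t^{-1}\,t\,\rmd t\,\rmd s = 2$. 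This gives \ref{hyp:variance:TCLsurPi:lineaire} with $\gamma^2(f) = 2\tilde\gamma^2(\phi) =: 2\tilde\gamma^2(f)$, matching the statement.

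For the remainder term \textbf{A\ref{hyp:variance:TCLsurPi}}\ref{hyp:variance:TCLsurPi:reste}, one expands $\pi_{\t_k}(P_{\t_k}-P_{\t_\star})\operpoisson_{\t_\star}(P_{\t_k}-P_{\t_\star})\operpoisson_{\t_\star}f$ as a double integral against $(\t_k-\t_\star)\otimes(\t_k-\t_\star)$ (plus cross terms with $\pi_{\t_k}$, bounded using $\pi_{\t_k}(V)\leq L_{\t_k}\leq C\t_k(V)$ and $\sup_k\t_k(V)<\infty$ a.s. by I\ref{E:ProcY}\ref{E:ProcY:item2}). The quadratic-in-measure structure means the $U$-statistic-type bound I\ref{E:ProcY}\ref{E:ProcY:item4} with $k=2$ applies: $\PE[((\t_n-\t_\star)^{\otimes 2}(h_n))^2]\leq A_2 n^{-2}$ for the relevant kernel $h_n$ (whose growth $|h_n|\lesssim\sum V^\alpha$ comes from $\ratio\leq 1$ and $\operpoisson_{\t_\star}f\in\L_{V^\alpha}$, noting $2\alpha<1$). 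Summing, $n^{-1/2}\sum_{k=1}^n O_{L^2}(k^{-1}) = O(n^{-1/2}\log n)\to 0$ in $L^2$, hence in probability, giving \ref{hyp:variance:TCLsurPi:reste}. The main obstacle is the bookkeeping for the linear term: confirming $\phi\in\mathcal{N}$ (so that I\ref{E:ProcY}\ref{E:ProcY:item3} may be invoked), handling the non-uniformity in $\t_k$ of $\operpoisson_{\t_k}$ versus $\operpoisson_{\t_\star}$ inside the remainder expansion, and justifying the Abel-summation reduction to $\int_0^1 t^{-1}S_n(\phi;t)\,\rmd t$ together with the boundary/truncation terms near $t=0$ — these require care but no new ideas beyond the estimates already in Propositions~\ref{prop:A1:IT}--\ref{prop:A3:A4:ICMCM} and assumption I\ref{E:ProcY}.
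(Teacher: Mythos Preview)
Your treatment of \textbf{A\ref{hyp:variance:TCLsurPi}}\ref{hyp:variance:TCLsurPi:lineaire} is essentially the paper's: the function you call $\phi$ is the paper's $G_f$, the reduction to $\int_0^1 t^{-1}S_n(\phi;t)\,\rmd t$ via Abel summation and the identification $\mathrm{Var}(\int_0^1 t^{-1}B_t\,\rmd t)=2$ are carried out in the same way.

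Where your proposal has a genuine gap is \textbf{A\ref{hyp:variance:TCLsurPi}}\ref{hyp:variance:TCLsurPi:reste}. You write the remainder as a double integral against $(\t_k-\t_\star)^{\otimes 2}$ and invoke I\ref{E:ProcY}\ref{E:ProcY:item4} with $k=2$ to obtain an $L^2$-bound of order $k^{-1}$. But the integrand is $\int\pi_{\t_k}(\rmd x)\,F^{(0)}(x,z_1,z_2)$, a \emph{random} kernel because $\pi_{\t_k}$ is a nonlinear, non-explicit function of $\t_k$ and is fully correlated with the $(\t_k-\t_\star)^{\otimes 2}$ factor. Assumption I\ref{E:ProcY}\ref{E:ProcY:item4} is stated for deterministic $h$; neither an almost-sure bound on $\pi_{\t_k}(V)$ nor Cauchy--Schwarz recovers the needed $k^{-1}$ decay, since passing to absolute values destroys the signed $(\t_k-\t_\star)^{\otimes 2}$ structure. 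The ``cross terms'' you mention do not separate out: writing $\pi_{\t_k}=\pi_{\t_\star}+(\pi_{\t_k}-\pi_{\t_\star})$ and iterating \eqref{eq:DL:pi:ordre1} launches an infinite regress of higher-order terms whose convergence is itself the issue.

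The paper resolves this by approximating $\pi_{\t_k}$ by a \emph{deterministic} measure. One writes, for a fixed $x$ and $\ell_k\asymp\log k$,
\[
\pi_{\t_k}(G^f_{\t_k})=(\pi_{\t_k}-P_{\t_k}^{\ell_k})G^f_{\t_k}(x)+(P_{\t_k}^{\ell_k}-P_{\t_\star}^{\ell_k})G^f_{\t_k}(x)+P_{\t_\star}^{\ell_k}G^f_{\t_k}(x)\,.
\]
The first piece is handled by geometric ergodicity and the a.s.\ control of $L_{\t_k}$. The third piece now has a deterministic integrating kernel $P_{\t_\star}^{\ell_k}(x,\cdot)$, so I\ref{E:ProcY}\ref{E:ProcY:item4} with $k=2$ applies directly and gives $O_{L^2}(k^{-1}\ell_k^\alpha)$. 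The second piece is the delicate one: expanding $P_{\t_k}^{\ell_k}-P_{\t_\star}^{\ell_k}$ as a telescoping sum in powers of $(P_{\t_k}-P_{\t_\star})$ produces, for each $t\leq \ell_k-1$, a $(t+2)$-fold integral against $(\t_k-\t_\star)^{\otimes(t+2)}$ with a deterministic kernel, and one must invoke I\ref{E:ProcY}\ref{E:ProcY:item4} for \emph{all} orders $t$, together with the growth condition $\limsup_k \ln A_k/(k\ln k)<\infty$, to sum these contributions. This is precisely why I\ref{E:ProcY}\ref{E:ProcY:item4} is formulated for every $k\geq 1$ rather than only $k=2$; your outline uses only the case $k=2$ and therefore cannot close.
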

The proof is postponed to Appendix~\ref{sec:proof:theo:IT}.

The above discussion could be repeated in order to prove by induction a CLT for
the $K$-level IT when $K>2$ (see \cite{fort:moulines:priouret:2010} for a
similar approach in the proof of the ergodicity and the LLN for IT).
Nevertheless, the main difficulty is to iterate the control of the $L^2$-moment
for the $V$-statistics (see I3-\ref{E:ProcY:item4}) when $\sequence{Y}{k}$ is
not a Markov Chain or, more generally, a process satisfying some mixing
conditions.  A similar difficulty has been reported in
\cite{andrieu:jasra:doucet:delmoral:2008}.

\section{Proofs}
\label{sec:proofs}
Denote by $D_V(\t,\t')$ the $V$-distance of the kernels $P_\t$ and $P_{\t'}$:
\begin{equation}
\label{eq:defi:DVnorm}
D_{V}(\t, \t') \eqdef \Vnorm{P_\t - P_{\t'}}{V} \eqsp.
\end{equation}
Note that under \textbf{A\ref{hyp:geometric-ergodicity}}, for any $\alpha \in \ocint{0,1}$,
any $f \in \L_{V^\alpha}$ and any $\t \in \Tset$,
 \begin{equation}
    \label{eq:Controle:FctPoisson}
    \fnorm{\operpoisson_\t f}{V^\alpha} \leq \fnorm{f}{V^\alpha} \ L_\t^2
  \end{equation}
  where $ L_\t$ is defined by (\ref{eq:DefinitionLtheta}).
\subsection{Proofs of the results in Section~\ref{sec:mainresult}}
\subsubsection{Proof of Theorem~\ref{theo:CLT:martingale}}
\label{sec:proof:theo:CLT:martingale}
Let $f \in \Lsub_{V^\alpha}$.  Eq. (\ref{eq:PoissonEquation}) yields to
$S_n^{(1)}(f) = \Xi_n(f) +R^{(1)}_n(f) + R_n^{(2)}(f)$ with
\begin{align*}
  \Xi_n(f) &\eqdef  \frac{1}{\sqrt{n}} \sum_{k=1}^n \{ \operpoisson_{\theta_{k-1}}f(X_k) -
  P_{\theta_{k-1}} \operpoisson_{\theta_{k-1}}f(X_{k-1})\} \eqsp, \\
  R^{(1)}_n(f) &\eqdef n^{-1/2} \sum_{k=1}^n \{ P_{\theta_{k}} \operpoisson_{\theta_{k}}f(X_{k}) -  P_{\theta_{k-1}} \operpoisson_{\theta_{k-1}}f(X_{k}) \} \eqsp, \\
  R^{(2)}_n(f) &\eqdef n^{-1/2} P_{\theta_0} \operpoisson_{\theta_0}f(X_0) -
  n^{-1/2} P_{\theta_n} \operpoisson_{\theta_n}f(X_n)\eqsp.
\end{align*}
We first show that the two remainders terms $R_n^{(1)}(f)$ and $R_n^{(2)}(f)$
converge to zero in probability.  We have
\[
\left| P_{\theta} \operpoisson_{\theta}f(x) -P_{\theta'}
  \operpoisson_{\theta'}f(x) \right| \leq \fnorm{P_{\theta}
  \operpoisson_{\theta}f(x) -P_{\theta'}
  \operpoisson_{\theta'}f(x)}{V^{\alpha}} V^{\alpha}(x) \eqsp.
\]
Assumption A\ref{hyp:cvgProba:series} implies that $R_n^{(1)}(f)$ converges to
zero in probability.  The drift inequality \textbf{A\ref{hyp:geometric-ergodicity}}
combined with the Jensen's inequality imply $P_\t V^{\alpha} \leq
\lambda_\t^\alpha V^{\alpha} + b^\alpha_\t$.  By (\ref{eq:Controle:FctPoisson})
and this inequality,
\[
|P_\t \operpoisson_{\theta}f(x)| \leq \fnorm{f}{V^{\alpha}} \, L_\t^2 \, P_\t
V^{\alpha}(x) \leq \fnorm{f}{V^{\alpha}} \, L_\t^2 \, (V^{\alpha}(x) +b^\alpha_\t) \eqsp.
\]
Then, $P_{\theta_0} \operpoisson_{\theta_0}f(X_0)$ is finite w.p.1. and
$n^{-1/2} P_{\theta_0} \operpoisson_{\theta_0}f(X_0) \aslim 0$. By
\textbf{A\ref{hyp:cvgProba:series}-\ref{hyp:cvgProba:series:2}} and
(\ref{eq:Controle:FctPoisson}), $n^{-1/2} P_{\theta_n}
\operpoisson_{\theta_n}f(X_n) \plim 0$. Hence, $R_n^{(2)}(f) \plim 0$.

We now consider $\Xi_n(f)$. Set $D_k(f) \eqdef \operpoisson_{\t_{k-1}}f(X_k) -
P_{\t_{k-1}} \operpoisson_{\t_{k-1}}f(X_{k-1})$.  Observe that under
\textbf{A\ref{hyp:DefinitionProc:X:Theta}}, $D_k(f)$ is a martingale-increment w.r.t.
the filtration $\{\F_k, k\geq 0\}$. The limiting distribution for $\Xi_n(f)$
follows from martingale CLT (see \eg\ \cite[Corollary 3.1.]{hall:heyde:1980}).
We check the conditional Lindeberg condition. Let $\epsilon >0$. Under
\textbf{A\ref{hyp:geometric-ergodicity}}, we have by (\ref{eq:Controle:FctPoisson})
\[
D_k(f) \leq \fnorm{f}{V^\alpha} \ \left| L_{\theta_{k-1}}^2 \, \left\{ V^\alpha(X_k) +
    P_{\theta_{k-1}}V^\alpha(X_{k-1}) \right\} \right|\eqsp.
\]
Set $\tau \eqdef 1/\alpha-2>0$.
\begin{align*}
&  \frac{1}{n}\sum_{k=1}^n \CPE{D_k^2(f) \un_{|D_k(f)| \geq \epsilon \sqrt{n}}}{\mathcal{F}_{k-1}} \leq \left(\frac{1}{\epsilon \sqrt{n}} \right)^\tau \frac{1}{n}\sum_{k=1}^n
  \CPE{D_k^{2 +\tau}(f)}{\mathcal{F}_{k-1}} \\
&  \leq \fnorm{f}{V^\alpha}^{2 +\tau} \ \left(\frac{1}{\epsilon \sqrt{n}} \right)^\tau
  \frac{1}{n}\sum_{k=1}^n \CPE{L_{\theta_{k-1}}^{2(2+\tau)} \, \left\{
      V^\alpha(X_k) + P_{\theta_{k-1}}V^\alpha(X_{k-1}) \right\}^{2 +\tau}}{\mathcal{F}_{k-1}} \\
&  \leq 2^{2+\tau} \fnorm{f}{V^\alpha}^{2 +\tau} \ \left(\frac{1}{\epsilon \sqrt{n}}
  \right)^\tau \frac{1}{n}\sum_{k=0}^{n-1}  L_{\theta_{k}}^{2(2+\tau)} \,
  P_{\theta_{k}}V(X_{k}) \eqsp.
\end{align*}
Under \textbf{A\ref{hyp:cvgProba:series}-\ref{hyp:cvgProba:series:2}}, the RHS converges
to zero in probability thus concluding the proof of the conditional Lindeberg
condition. For the limiting variance condition, observe that
\[
\frac{1}{n} \sum_{k=1}^n \CPE{D_k^2(f)}{\F_{k-1}}
  = \frac{1}{n} \sum_{k=0}^{n-1} F_{\t_k}(X_k) \eqsp,
\]
where $F_\t$ is given by \eqref{eq:definition-gtheta} and, under
\textbf{A\ref{hyp:existence:variance}}, $ n^{-1} \sum_{k=1}^n \CPE{D_k^2}{\F_{k-1}}
\plim \sigma^2(f)$.  This concludes the proof.

\subsubsection{Proof of Theorem~\ref{theo:TCL}}
\label{sec:proof:theo:TCL}
We start by establishing a joint CLT for $(S_n^{(1)}(f),S_n^{(2)}(f))$, where
$S_n^{(1)}(f)$ and $S_n^{(2)}(f)$ are defined in \eqref{eq:definition-S_1} and
\eqref{eq:definition-S_2}, respectively. Similar to the proof of
Theorem~\ref{theo:CLT:martingale}, we write $S_n^{(1)}(f) = \Xi_n(f)
+R^{(1)}_n(f) + R_n^{(2)}(f)$ and prove that $R^{(1)}_n(f) + R_n^{(2)}(f) \plim
0$. We thus consider the convergence of $\Xi_n(f) + S_n^{(2)}(f)$. Set $\F_n^\t
\eqdef \sigma(\t_k, k\leq n)$. Under \textbf{A\ref{hyp:X_conditionellement_markov}},
\[
\PE\left[\rme^{\rmi (u_1 \Xi_n(f) + u_2 S_n^{(2)}(f))}\right]= \PE\left[
  \CPE{\rme^{\rmi u_1 \Xi_n(f)}}{\F_n^\t} \rme^{\rmi u_2 S_n^{(2)}(f)} \right]
\eqsp.
\]
Applying the conditional CLT \cite[Theorem A.3.]{douc:moulines:2008} with the filtration
$\F_{n,k} \eqdef \sigma(Y_{1}, \cdots, Y_n, X_1, \cdots, X_k)$, yields to:
\begin{equation}
  \label{eq:tool:douc:moulines}
  \lim_{n \to \infty} \CPE{\rme^{\rmi u_1 \Xi_n(f)}}{\F_n^\t} \plim \rme^{-u_1^2
  \sigma^2(f)/2} \eqsp;
\end{equation}
observe that under \textbf{A\ref{hyp:X_conditionellement_markov}}, the conditions (31)
and (32) in \cite{douc:moulines:2008} can be proved following the same lines as
in the proof of Theorem~\ref{theo:CLT:martingale}; details are omitted.
Therefore,
\begin{multline*}
  \PE\left[\rme^{\rmi (u_1 \Xi_n(f) + u_2 S_n^{(2)}(f))}\right]=
  \PE\left[ \left( \CPE{\rme^{\rmi u_1 \Xi_n(f)}}{\F_n^\t} - \rme^{-u_1^2 \sigma^2(f)/2} \right) \rme^{i u_2 S_n^{(2)}(f)}  \right] \\
  + \rme^{-u_1 \sigma^2(f)/2} \PE\left[ \rme^{\rmi u_2 S_n^{(2)}(f)} \right]
  \eqsp.
\end{multline*}
By (\ref{eq:tool:douc:moulines}), the first term in the RHS of the previous
equation converges to zero.  Under \textbf{A\ref{hyp:variance:TCLsurPi}}, $\lim_{n \to
  \infty} \PE\left[ \rme^{\rmi u_2 S_n^{(2)}(f)} \right] = \rme^{-u_2^2
  \gamma^2(f)/2}$ and this concludes the proof.

\subsection{Proofs of Section~\ref{sec:applicationIT}}
\label{sec:proof:IT}
\subsubsection{Proof of Lemma~\ref{lem:equicontinuity:P}}
\label{sec:proof:lem:equicontinuity:P}
Let $\gamma \in (0,1/2)$ and $\F$ be an equicontinuous set of functions in
$\L_{V^\gamma}$. Let $h \in \F$, $\fnorm{h}{V^\gamma} \leq 1$. By construction,
the transition kernel of a symmetric random walk Metropolis with proposal
transition density $q(x,\cdot)$ and target density $\pi$ may be expressed as
\[
Ph(x)= \int \ratio(x,y) h(y) q(x,y) \, \rmd y + h(x) \int \left\{ 1 -
  \ratio(x,y) \right\} q(x,y) \rmd y \eqsp,
\]
where $\ratio(x,y) \eqdef 1 \wedge (\pi(y)/\pi(x))$ is the acceptance ratio. Therefore, the difference
$Ph(x) - Ph(x')$ may be bounded by
  \begin{align*}
    & \left| P h(x) - P h(x') \right| \leq  2  \left|h(x) - h(x') \right|  \\
    & \phantom{| P h(x) - } + \int \left|h(y) - h(x') \right| \ \left|
      \ratio(x,y) -
      \ratio(x',y) \right| q(x,y) \rmd y  \\
    & \phantom{| P h(x) - } +\left| \int \left(h(y) - h(x') \right) \
      \ratio(x',y) \left( q(x,y) - q(x',y) \right) \rmd y \right| \eqsp.
  \end{align*}
Since   $|\ratio(x,y) - \ratio(x',y)| \leq \pi(y) | \pi^{-1}(x) - \pi^{-1}(x') |$,
\begin{align*}
&\int \left|h(y) - h(x') \right| \ \left| \ratio(x,y) - \ratio(x',y) \right| q(x,y) \rmd y \\
&\phantom{\int \|h(y) }   \leq \left| \pi^{-1}(x) - \pi^{-1}(x') \right| \ \int \left|h(y) - h(x') \right| \pi(y) \ q(x,y) \ \rmd y \\
&\phantom{\int \|h(y) } \leq  \left( \sup_{(x,y) \in \Xset^2} q(x,y) \right) \ \left| \pi^{-1}(x) - \pi^{-1}(x') \right| \ \left( \pi(V^\gamma) + V^\gamma(x') \right) \eqsp.
\end{align*}
In addition,
\begin{align*}
& \left| \int \left(h(y) - h(x') \right) \ \ratio(x',y) \left( q(x,y) - q(x',y)
    \right) \rmd y \right| \\
&  = \left| \int_{\{y: \pi(y) \leq \pi(x') \}} \left(h(y) - h(x') \right) \
    \frac{\pi(y)}{\pi(x')} \left( q(x,y) - q(x',y)
    \right) \rmd y \right| \\
&  + \left| \int_{\{y: \pi(y) > \pi(x') \}} \left(h(y) - h(x') \right) \ \left(  q(x,y) - q(x',y) \right) \rmd y \right| \\
& \leq 4 \ \pi^{-1}(x') \ \tvnorm{q(x,\cdot) - q(x',\cdot)} \ \sup_{y \in \Xset} \left| h(y)\ \pi(y)\right| \eqsp.
\end{align*}
Since $V \propto \pi^{-\tau}$ and $\tau \in (0,1)$, $\sup_{\Xset} |h| \pi \leq
1$ under I\ref{EES:Pi}.  Therefore,  there exists a constant $C$ such that for
any $h \in \{h \in \F, \fnorm{h}{V^\gamma} \leq 1 \}$ and any $x,x' \in \Xset$,
\begin{multline*}
  \left| P h(x) - P h(x') \right| \leq 2 \left|h(x) - h(x') \right| \\
  + C \left(\left| \pi^{-1}(x) - \pi^{-1}(x') \right| +\tvnorm{q(x,\cdot) -
      q(x',\cdot)} \right) \left( V^\gamma(x') + \pi^{-1}(x') \right) \eqsp,
\end{multline*}
thus concluding the proof.

\subsubsection{Proof of Proposition~\ref{prop:A3:A4:ICMCM}}
\label{sec:proof:prop:A3:A4:ICMCM}
The proof is prefaced by several lemmas.
\begin{lemma}
\label{lem:iMCMC:majoDV}
Let $P_\t$ be the transition kernel given by \eqref{eq:definition-Pt-EE}. For
any $(\t,\t') \in \Tset^2$ and any $\alpha \in \ocint{0,1}$, we
have $D_{V^\alpha}(\t,\t') \leq 2 \Vnorm{\t-\t'}{V^\alpha}$.  For any positive
integer $n$,
\begin{equation}
\label{eq:boundDV}
D_{V^\alpha}(\t_n,\t_{n-1}) \leq \frac{2}{n} \t_{n-1}(V^\alpha) + \frac{2}{n} V^\alpha(Y_n) \eqsp.
\end{equation}
\end{lemma}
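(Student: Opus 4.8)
The plan is to reduce the whole statement to the definition of the $V^\alpha$-operator norm, $\Vnorm{P_\t-P_{\t'}}{V^\alpha}=\sup_{x}V^{-\alpha}(x)\,\sup\{|(P_\t-P_{\t'})g(x)|:\fnorm{g}{V^\alpha}\le 1\}$, and to exploit that the acceptance ratio $\ratio(x,\cdot)$ takes values in $[0,1]$ while $V^\alpha\ge 1$. First I would fix $x\in\Xset$ and a test function $g$ with $\fnorm{g}{V^\alpha}\le 1$, and plug the explicit form \eqref{eq:definition-Pt-EE} into $P_\t g(x)-P_{\t'}g(x)$. The common term $(1-\epsilon)Pg(x)$ cancels, leaving
\[
P_\t g(x)-P_{\t'}g(x)=\epsilon\int \ratio(x,y)\,g(y)\,(\t-\t')(\rmd y)+\epsilon\,g(x)\int\{1-\ratio(x,y)\}\,(\t-\t')(\rmd y)\eqsp.
\]

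Next I would note that, viewed as functions of $y$, both $\ratio(x,\cdot)g(\cdot)$ and $1-\ratio(x,\cdot)$ lie in the unit ball of $\L_{V^\alpha}$: indeed $|\ratio(x,y)g(y)|\le \ratio(x,y)V^\alpha(y)\le V^\alpha(y)$ since $0\le\ratio\le 1$ and $\fnorm{g}{V^\alpha}\le 1$, and $0\le 1-\ratio(x,y)\le 1\le V^\alpha(y)$. Hence each of the two integrals is bounded in absolute value by $\Vnorm{\t-\t'}{V^\alpha}$ by definition of the measure $V^\alpha$-norm. Using $|g(x)|\le V^\alpha(x)$, $V^\alpha(x)\ge 1$ and $\epsilon<1$ then gives $|P_\t g(x)-P_{\t'}g(x)|\le 2\epsilon\,\Vnorm{\t-\t'}{V^\alpha}\,V^\alpha(x)$; dividing by $V^\alpha(x)$ and taking the supremum over $x$ and over admissible $g$ yields $D_{V^\alpha}(\t,\t')\le 2\,\Vnorm{\t-\t'}{V^\alpha}$ (even with the sharper constant $2\epsilon$).

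For \eqref{eq:boundDV} I would use the telescoping identity $\t_n=\tfrac{n-1}{n}\,\t_{n-1}+\tfrac1n\,\delta_{Y_n}$, which follows directly from \eqref{eq:IT:thetan}, so that $\t_n-\t_{n-1}=\tfrac1n(\delta_{Y_n}-\t_{n-1})$. For any $g$ with $\fnorm{g}{V^\alpha}\le 1$ this gives $|(\t_n-\t_{n-1})(g)|\le \tfrac1n\big(V^\alpha(Y_n)+\t_{n-1}(V^\alpha)\big)$, i.e. $\Vnorm{\t_n-\t_{n-1}}{V^\alpha}\le \tfrac1n\,\t_{n-1}(V^\alpha)+\tfrac1n\,V^\alpha(Y_n)$, and combining with the first part applied to $\t=\t_n$, $\t'=\t_{n-1}$ gives the claimed bound. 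There is no genuine obstacle here; the only point needing a little care is the observation in the second paragraph that multiplying a function from the unit ball of $\L_{V^\alpha}$ by $\ratio(x,\cdot)\in[0,1]$ (and that the ``almost constant'' function $1-\ratio(x,\cdot)$) stays in that unit ball, which is exactly where $V^\alpha\ge 1$ is used.
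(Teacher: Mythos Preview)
Your proof is correct and follows essentially the same approach as the paper. The only cosmetic difference is that the paper first uses $(\t-\t')(\Xset)=0$ to merge the two integrals into the single expression $\epsilon\int \ratio(x,y)\big(g(y)-g(x)\big)(\t-\t')(\rmd y)$ before bounding, whereas you bound the two pieces separately; both routes land on the same inequality $|P_\t g(x)-P_{\t'}g(x)|\le 2\epsilon\,\Vnorm{\t-\t'}{V^\alpha}\,V^\alpha(x)$, and your treatment of \eqref{eq:boundDV} via $\t_n-\t_{n-1}=\tfrac1n(\delta_{Y_n}-\t_{n-1})$ is exactly the paper's.
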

\begin{proof}
  For any $f \in \L_{V^\alpha}$ and any $x \in \Xset$,
\begin{multline*}
  \left| P_\t(x,f) - P_{\t'}(x,f) \right| = \epsilon \left| \int \ratio(x,y) \left[ f(y) - f(x) \right] \left[ \t(\rmd y) - \t'(\rmd y) \right] \right| \\
  \leq \epsilon \Vnorm{\t - \t'}{V^\alpha} \fnorm{\ratio(x,\cdot) \left[
      f(\cdot) - f(x) \right]}{V^\alpha} \leq 2 \Vnorm{\t - \t'}{V^\alpha}
  \fnorm{f}{V^\alpha} \eqsp,
\end{multline*}
which proves the first assertion. Inequality  \eqref{eq:boundDV} follows from
(\ref{eq:IT:thetan}) and the obvious identity
\[
\t_n(f) - \t_{n-1}(f) = \frac{-1}{n(n-1)} \sum_{k=1}^{n-1} f(Y_k) + \frac{1}{n} f(Y_n) = \frac{1}{n}\left[ f(Y_n)- \t_{n-1}(f) \right]\eqsp.
\]
\end{proof}

\begin{lemma}
\label{lem:bienpratique}
Let $\alpha \in (0,1)$.  Assume I\ref{EES:Pi},
I\ref{E:KernelP}\ref{E:Irred}-\ref{E:Drift}-\ref{E:Minor},
I\ref{E:ProcY}\ref{E:ProcY:item1}-\ref{E:ProcY:item2}, and $\PE[V(X_0)]< +
\infty$. Then for any $\gamma, \gamma' \in (0,1)$ and any $\delta > \gamma$,
\[
n^{-\delta} \sum_{k=1}^n D_{V^\gamma}(\t_k, \t_{k-1}) V^{\gamma'}(X_k) \plim 0 \eqsp.
\]
\end{lemma}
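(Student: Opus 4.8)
The plan is to combine the two bounds from Lemma~\ref{lem:iMCMC:majoDV} with the moment control on $V(X_k)$ from Proposition~\ref{prop:A1:IT} and the almost-sure convergence $\t_n(V)\aslim\t_\star(V)$ from I\ref{E:ProcY}-\ref{E:ProcY:item2}. First I would invoke (\ref{eq:boundDV}), noting that since $\gamma<1$ and $\gamma'<1$ we have $V^\gamma\leq V$ and $V^{\gamma'}\leq V$, so that
\[
n^{-\delta} \sum_{k=1}^n D_{V^\gamma}(\t_k,\t_{k-1}) V^{\gamma'}(X_k)
\leq 2 n^{-\delta} \sum_{k=1}^n \frac{1}{k} \left\{ \t_{k-1}(V) + V(Y_k) \right\} V(X_k) \eqsp.
\]
The goal is then to show the right-hand side converges to zero in probability. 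I would split the sum into the two contributions $\t_{k-1}(V) V(X_k)$ and $V(Y_k) V(X_k)$, each weighted by $(k n^\delta)^{-1}$.

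For the first contribution, by I\ref{E:ProcY}-\ref{E:ProcY:item2} the sequence $\t_{k-1}(V)$ converges almost surely to the finite constant $\t_\star(V)$, hence is almost surely bounded by some (random) finite $M$. Therefore $n^{-\delta}\sum_{k=1}^n k^{-1} \t_{k-1}(V) V(X_k) \leq M n^{-\delta} \sum_{k=1}^n k^{-1} V(X_k)$; since $\PE[V(X_k)]$ is uniformly bounded by Proposition~\ref{prop:A1:IT} (using I\ref{E:ProcY}-\ref{E:ProcY:item1} and $\PE[V(X_0)]<\infty$), the expectation of $n^{-\delta}\sum_{k=1}^n k^{-1} V(X_k)$ is $O(n^{-\delta}\log n)\to 0$, so that partial sum tends to $0$ in $L^1$ hence in probability, and multiplication by the a.s.\ finite $M$ preserves convergence in probability. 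For the second contribution, I would use Markov's inequality directly: $\PE\big[ V(Y_k) V(X_k) \big] \leq \big(\PE[V^2(Y_k)]\big)^{1/2}\big(\PE[V^2(X_k)]\big)^{1/2}$ by Cauchy--Schwarz --- but this requires second moments, which are not assumed. So instead I would not use Cauchy--Schwarz but rather condition: writing $\F_n^{\theta}=\sigma(Y_j,j\geq 1)$ and noting that $Y_k$ is $\F_n^\theta$-measurable, $\PE[V(Y_k)V(X_k)\mid \F_n^\theta] = V(Y_k)\,\PE[V(X_k)\mid\F_n^\theta]$; however controlling $\PE[V(X_k)\mid\F_n^\theta]$ uniformly requires care. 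A cleaner route is to observe that $\t_n(V) = n^{-1}\sum_{k=1}^n V(Y_k)$, so $n^{-1} V(Y_k)\leq \t_k(V)$, giving $k^{-1}V(Y_k)V(X_k)\leq \t_k(V) V(X_k)$, which reduces the second contribution to the same form as the first (with $\t_k$ in place of $\t_{k-1}$), and we are done by the same argument.

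The main obstacle I anticipate is the interplay between the \emph{random} bound $M$ on $\sup_k \t_k(V)$ (which is only almost surely finite, not bounded in expectation a priori --- although under I\ref{E:ProcY}-\ref{E:ProcY:item1} one has $\sup_k\PE[\t_k(V)]=\sup_k k^{-1}\sum_{j\leq k}\PE[V(Y_j)]<\infty$) and the requirement of $L^1$ or in-probability control of $\sum k^{-1}V(X_k)$. The safe resolution is the standard $\varepsilon$-$\delta$ argument for convergence in probability: fix $\eta,\varepsilon>0$; choose $M_0$ with $\PP(\sup_k\t_k(V)>M_0)<\eta/2$; on the complementary event the partial sum is $\leq M_0 n^{-\delta}\sum_{k=1}^n k^{-1}V(X_k)$, whose expectation is $O(M_0 n^{-\delta}\log n)$, hence $<\varepsilon$ with probability $>1-\eta/2$ for $n$ large by Markov. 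Union bound closes the argument, and the symmetric treatment of the $V(Y_k)V(X_k)$ term via $k^{-1}V(Y_k)\le\t_k(V)$ completes the proof.
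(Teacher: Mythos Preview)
Your treatment of the $V^\gamma(Y_k)$-term contains a genuine gap. After bounding $k^{-1}V(Y_k)\le \t_k(V)$ you obtain
\[
n^{-\delta}\sum_{k=1}^n \t_k(V)\,V(X_k),
\]
and you claim this ``reduces the second contribution to the same form as the first''. But the first contribution is $n^{-\delta}\sum_{k=1}^n k^{-1}\t_{k-1}(V)\,V(X_k)$: it carries an explicit $k^{-1}$ which your reduced second term does not. With $\t_k(V)$ merely a.s.\ bounded, your bound gives $M\,n^{-\delta}\sum_{k=1}^n V(X_k)$, whose expectation is of order $n^{1-\delta}$. Since the lemma only assumes $\delta>\gamma$ with $\gamma\in(0,1)$ (and in the paper's applications $\delta=1/2$ or $\delta=1$), this need not tend to $0$. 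The crude bound $k^{-1}V(Y_k)\le\t_k(V)$ throws away exactly the decay you need.

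The fix is to \emph{not} upgrade $V^\gamma(Y_k)$ to $V(Y_k)$ in the first step. Keep $\frac{1}{k}V^\gamma(Y_k)=k^{\gamma-1}\bigl(V(Y_k)/k\bigr)^{\gamma}$. Since $\t_k(V)\aslim\t_\star(V)$ forces $V(Y_k)/k=\t_k(V)-\frac{k-1}{k}\t_{k-1}(V)\to 0$ a.s., the factor $\bigl(V(Y_k)/k\bigr)^{\gamma}$ is a.s.\ bounded, and the whole sum is dominated (on a set of probability $\geq 1-\eta$, by your own truncation device) by a constant times $n^{-\delta}\sum_{k=1}^n k^{\gamma-1}V^{\gamma'}(X_k)$, whose expectation is $O(n^{\gamma-\delta})\to 0$ precisely because $\delta>\gamma$. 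This is the route the paper takes; your first-term argument is fine and essentially matches it, but the second term needs this sharper handling.
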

\begin{proof}
  By Lemma~\ref{lem:iMCMC:majoDV}, we have
\[
n^{-\delta} \sum_{k=1}^n D_{V^\gamma}(\t_k, \t_{k-1}) V^{\gamma'}(X_k) \leq 2
n^{-\delta} \sum_{k=1}^n \frac{1}{k} \left\{ \t_{k-1}(V^\gamma) + V^\gamma(Y_k)
\right\} V^{\gamma'}(X_k) \eqsp.
\]
By I\ref{E:ProcY}-\ref{E:ProcY:item2}, $\t_k(V) \aslim \t_\star(V)$ thus
implying that $\limsup_k \{ \t_k(V^\gamma) + k^{-\gamma} V^\gamma(Y_k)\} <
\infty$, $\PP$-\as .  Therefore, the result holds if
\[
\limsup_{n \to \infty} n^{-\delta} \sum_{k =1}^n k^{\gamma-1} \PE[V^{\gamma'}(X_k)] = 0
\eqsp.
\]
Under the stated assumptions, Proposition~\ref{prop:A1:IT} implies that $\sup_k \PE\left[V(X_k) \right]< +\infty$ and this concludes the proof.
\end{proof}

\begin{lemma}\label{lem:equicontinuity:Ptheta}
  For any $\t \in \Tset$, any measurable function $f : \Xset \to \Rset$ in
  $\L_{V^\alpha}$ and any $x,x' \in \Xset$ such that $\pi(x) \leq \pi(x')$
  \begin{multline*}
    \left| P_\t f(x) - P_\t f(x') \right| \leq \left| Pf(x) - Pf(x') \right| +
    \left|f(x) - f(x') \right|   \\
    + \sup_\Xset \pi \ \fnorm{f}{V^\alpha} \ \left| \pi^{-\beta}(x) -
      \pi^{-\beta}(x') \right| \ \left(V^\alpha(x') + \t(V^\alpha) \right)
    \eqsp.
  \end{multline*}
\end{lemma}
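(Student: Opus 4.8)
The plan is to reproduce the bookkeeping of the proof of Lemma~\ref{lem:equicontinuity:P}, taking advantage of the fact that, unlike the proposal density of an ordinary Metropolis kernel, the interaction measure $\t$ in \eqref{eq:definition-Pt-EE} does not depend on the current state, so that no ``$q(x,\cdot)-q(x',\cdot)$''-type remainder appears. First I would rewrite \eqref{eq:definition-Pt-EE}, using that $\t$ is a probability measure, as
\[
P_\t f(x) = (1-\epsilon)\,Pf(x) + \epsilon\, f(x) + \epsilon \int \ratio(x,y)\,[f(y)-f(x)]\,\t(\rmd y)\eqsp,
\]
all terms being finite for $f\in\L_{V^\alpha}$ (using $\ratio\le 1$, the drift condition I\ref{E:KernelP} and, implicitly, $\t(V^\alpha)<\infty$ — the asserted inequality is vacuous otherwise). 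Subtracting the same identity at $x'$ gives
\[
P_\t f(x)-P_\t f(x') = (1-\epsilon)[Pf(x)-Pf(x')] + \epsilon[f(x)-f(x')] + \epsilon\,[\varphi(x)-\varphi(x')]\eqsp,
\]
with $\varphi(x)\eqdef\int\ratio(x,y)[f(y)-f(x)]\,\t(\rmd y)$.

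The one point that needs care is to treat $\varphi(x)-\varphi(x')$ so that the ``$f(x)-f(x')$'' contributions do not accumulate a coefficient larger than one; this is achieved by collapsing the $f$-increments with a single insertion of $\pm f(x')$ rather than by applying the triangle inequality term by term. Writing $f(y)-f(x)=[f(y)-f(x')]+[f(x')-f(x)]$ inside the integral defining $\varphi(x)$ yields
\[
\varphi(x)-\varphi(x') = [f(x')-f(x)]\int\ratio(x,y)\,\t(\rmd y) + \int [\ratio(x,y)-\ratio(x',y)]\,[f(y)-f(x')]\,\t(\rmd y)\eqsp.
\]
Substituting this and using $0\le 1-\int\ratio(x,y)\,\t(\rmd y)\le 1$ together with $\epsilon\in(0,1)$, $1-\epsilon\in(0,1)$, one obtains
\[
|P_\t f(x)-P_\t f(x')| \le |Pf(x)-Pf(x')| + |f(x)-f(x')| + \int |\ratio(x,y)-\ratio(x',y)|\,|f(y)-f(x')|\,\t(\rmd y)\eqsp.
\]

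It remains to bound the last integral. Since $t\mapsto 1\wedge t$ is $1$-Lipschitz and $\ratio(x,y)=1\wedge(\pi^\beta(y)/\pi^\beta(x))$ by \eqref{eq:definition-alpha}, one has $|\ratio(x,y)-\ratio(x',y)|\le \pi^\beta(y)\,|\pi^{-\beta}(x)-\pi^{-\beta}(x')|$; then one bounds $\pi^\beta(y)\le\sup_\Xset\pi$ (I\ref{EES:Pi}, $\beta\in(0,1)$) and $|f(y)-f(x')|\le\fnorm{f}{V^\alpha}(V^\alpha(y)+V^\alpha(x'))$, and integrates against the probability measure $\t$ to produce the factor $V^\alpha(x')+\t(V^\alpha)$. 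Collecting the three pieces gives the asserted inequality. There is no genuine obstacle here: the argument is purely algebraic together with the elementary Lipschitz estimate on the acceptance ratio, and the hypothesis $\pi(x)\le\pi(x')$ plays no essential role — it merely fixes that it is $V^\alpha(x')$ (the smaller of $V^\alpha(x)$, $V^\alpha(x')$) that appears, which is the form convenient for the equicontinuity arguments in which the lemma is subsequently used.
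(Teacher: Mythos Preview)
Your proof is correct and is precisely the adaptation of the argument for Lemma~\ref{lem:equicontinuity:P} that the paper has in mind when it writes ``adapted from \cite[Lemma~5.1]{fort:moulines:priouret:2010}; omitted for brevity'': rewrite $P_\t f$ via \eqref{eq:definition-Pt-EE}, collapse the $f$-increments so the coefficient in front of $|f(x)-f(x')|$ stays below~$1$, and use the Lipschitz bound $|\ratio(x,y)-\ratio(x',y)|\le \pi^\beta(y)\,|\pi^{-\beta}(x)-\pi^{-\beta}(x')|$ on the acceptance ratio. One cosmetic remark: your bound $\pi^\beta(y)\le\sup_\Xset\pi$ is only valid when $\sup_\Xset\pi\ge 1$; the safe estimate is $(\sup_\Xset\pi)^\beta$, but this constant is immaterial for the equicontinuity applications and the lemma statement itself carries the same harmless imprecision.
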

\begin{proof}
The proof is adapted from \cite[Lemma~5.1.]{fort:moulines:priouret:2010}; it is  omitted for brevity.
\end{proof}

\begin{proof}[Proof of Proposition~\ref{prop:A3:A4:ICMCM}]
  Let $\alpha \in (0,1/2)$. By Proposition~\ref{prop:A1:IT},
  \textbf{A\ref{hyp:geometric-ergodicity}} and P[$\alpha$] hold.  By
  I\ref{E:ProcY}-\ref{E:ProcY:item2},
  \begin{equation}
    \label{eq:IT:LthetaCvg}
    \limsup_n L_{\t_n} < +\infty \eqsp, \ \ \PP-\as
  \end{equation}
  where $L_\t$ is given by (\ref{eq:DefinitionLtheta}) with $C_\t, \rho_\t$
  defined by P[$\alpha$].

  We first check \textbf{A\ref{hyp:cvgProba:series}-\ref{hyp:cvgProba:series:1}}. Let $f
  \in \mathcal{N} \cap \L_{V^\alpha}$. By
  Lemma~\ref{lem:regularity-in-theta:poisson},
\[
\fnorm{P_{\t_k} \operpoisson_{\t_k} f - P_{\t_{k-1}} \operpoisson_{\t_{k-1}} f
}{V^\alpha} \leq 5 \ \left(L_{\t_k} \vee L_{\t_{k-1}} \right)^6
\pi_{\t_k}(V^\alpha) D_{V^\alpha}(\t_k,\t_{k-1}) \ \fnorm{f}{V^\alpha}\eqsp.
\]
By Lemma~\ref{lem:BoundCandRho}, Proposition~\ref{prop:A1:IT} and
Assumptions I\ref{EES:Pi}, I\ref{E:KernelP} and
I\ref{E:ProcY}-\ref{E:ProcY:item2},
\begin{equation}
\label{eq:limsuppitheta}
\limsup_{n \to \infty} \pi_{\t_n} (V) \leq \tilde{b} \; (1-\tilde \lambda)^{-1} \limsup_{n \to \infty} \t_n(V) < \infty \eqsp,\ \PP-\as\ .
\end{equation}
Therefore, by (\ref{eq:IT:LthetaCvg}) and (\ref{eq:limsuppitheta}), it
suffices to prove that
\[
n^{-1/2} \sum_{k=1}^n D_{V^\alpha}(\t_k,\t_{k-1}) V^\alpha(X_k) \plim 0 \eqsp,
\]
which follows from Lemma~\ref{lem:bienpratique}. We now check
\textbf{A\ref{hyp:cvgProba:series}-\ref{hyp:cvgProba:series:2}}.  By
Proposition~\ref{prop:A1:IT}, it holds
\[
n^{-1/(2\alpha)} \sum_{k=1}^n L_{\t_k}^{2/\alpha} P_{\t_k} V(X_k) \leq  n^{-1/(2\alpha)} \sum_{k=1}^n L_{\t_k}^{2/\alpha} \left[ V(X_k)  + \tilde b \t_k(V) \right] \eqsp.
\]
Under the stated assumptions, $\limsup_n \left[\t_n(V) + L_{\t_n} \right]<
+\infty$ w.p.1. and by Proposition~\ref{prop:A1:IT}, $\sup_k
\PE\left[V(X_k) \right] < +\infty$. Since $2 \alpha <1$, this concludes the
proof.

The proof of \textbf{A\ref{hyp:existence:variance}} is in two steps: it is first proved
that
\begin{equation}
  \label{eq:prop:A3:A4:IT:tool1}
 \frac{1}{n} \sum_{k=0}^{n-1} F_{\t_k}(X_k)- \frac{1}{n} \sum_{k=0}^{n-1} \int \pi_{\t_k}(\rmd x)  F_{\t_k}(x) \plim 0 \eqsp,
\end{equation}
and then it is established that
\begin{equation}
  \label{eq:prop:A3:A4:IT:tool2}
\int \pi_{\t_k}(\rmd x) F_{\t_k}(x) \aslim \int \pi_{\t_\star}(\rmd x)
F_{\t_\star}(x) \eqsp.
 \end{equation}
 In order to prove (\ref{eq:prop:A3:A4:IT:tool1}), we check the conditions
 of Theorem~\ref{theo:WeakLLN} in Appendix~\ref{appendix} with $\gamma = 2
 \alpha$.  First observe that $\operpoisson_\t f^2 \in \L_{V^{2 \alpha}}$ (see
 (\ref{eq:Controle:FctPoisson})).  We check
 conditions~\eqref{theo:WeakLLN:item1} to~\eqref{theo:WeakLLN:item6} of
 Theorem~\ref{theo:WeakLLN}.

 \textit{\eqref{theo:WeakLLN:item1}} and \textit{\eqref{theo:WeakLLN:item3}}
 follow from Proposition~\ref{prop:A1:IT} and (\ref{eq:IT:LthetaCvg}).

\textit{\eqref{theo:WeakLLN:item2}} follows from Eq.~\eqref{eq:limsuppitheta}.

\textit{\eqref{theo:WeakLLN:item4}} follows from Lemma~\ref{lem:bienpratique}.

\textit{\eqref{theo:WeakLLN:item5}} under \textbf{A\ref{hyp:geometric-ergodicity}}, we have by
(\ref{eq:Controle:FctPoisson}) and the Jensen's inequality
\[
|F_{\t}(x)| \leq 2 \fnorm{f}{V^\alpha}^2 \,  L_\t^4 \ P_\t V^{2 \alpha}(x) \leq 2 \fnorm{f}{V^\alpha}^2
\, L_\t^4 \, \fnorm{P_\t V^{2 \alpha}}{V^{2 \alpha}} V^{2 \alpha}(x) \eqsp.
\]
Hence, $\fnorm{F_{\t}}{V^{2\alpha}} \leq 2 \fnorm{f}{V^{\alpha}}^2 L_\t^4
\fnorm{P_\t V^{2 \alpha}}{V^{2 \alpha}}$.  By
I\ref{E:ProcY}-\ref{E:ProcY:item2}, the drift inequality
(\ref{eq:prop:A1:IT:drift}) and the Jensen's inequality, $\limsup_n
\fnorm{P_{\t_n} V^{2 \alpha}}{V^{2 \alpha}}< +\infty$ w.p.1.
(\ref{eq:IT:LthetaCvg})  concludes the proof.

\textit{\eqref{theo:WeakLLN:item6}} Set $F_\theta(x) = G_\t(x) - H_{\t}(x)$ where
$G_\t(x) \eqdef P_\t [\operpoisson_\t f]^2(x)$ and $H_\t(x) \eqdef \left(P_\t
  \operpoisson_\t f(x) \right)^2$, with

\[
\fnorm{\operpoisson_\t f}{V^\alpha} + \fnorm{\operpoisson_{\t'} f}{V^\alpha} \leq M_{\t,\t'} \eqdef \left( L_{\t}^2 + L_{\t'}^2 \right) \fnorm{f}{V^\alpha} \eqsp.
\]
By (\ref{eq:IT:LthetaCvg}), $\limsup_{n \to \infty} M_{\t_n,\t_{n-1}} <
\infty$, $\PP$-\as

It holds
\begin{align*}
  &  |G_\t(x) -G_{\t'}(x) |  \\
  &\leq \left|P_\t\left(x,[\operpoisson_\t f]^2 - [\operpoisson_{\t'}
      f]^2\right) \right| + \left| \int \left\{ P_\t(x,\rmd y) - P_{\t'}(x,
      \rmd y)\right\}
    [\operpoisson_{\t'} f]^2(y) \right| \\
  & \leq 2 M_{\t,\t'} \ P_\t\left(x, \left| \operpoisson_\t f -
      \operpoisson_{\t'} f\right| V^\alpha\right) + M_{\t,\t'}^2 \ D_{V^{2 \alpha}}(\t,\t') \ V^{2 \alpha}(x)  \\
  & \leq 2 M_{\t,\t'} \ \fnorm{\operpoisson_\t f - \operpoisson_{\t'}
    f}{V^\alpha} \ \fnorm{P_\t V^{2 \alpha}}{V^{2\alpha}} V^{2\alpha}(x) +
  M_{\t,\t'}^2 \ D_{V^{2 \alpha}}(\t,\t') \ V^{2\alpha}(x) \eqsp.
\end{align*}
By Lemma~\ref{lem:regularity-in-theta:poisson},
$$
\fnorm{f}{V^\alpha}^{-1} \fnorm{\operpoisson_\t f - \operpoisson_{\t'} f}{V^\alpha} \leq
3  D_{V^{\alpha}}(\t,\t') \ \left(L_\t \vee L_{\t'} \right)^6 \pi_\t(V^\alpha) \eqsp.
$$
Since w.p.1.:
\[
\limsup_{n \to \infty} \left\{ \pi_{\t_n}(V) + M_{\t_n,\t_{n-1}} + L_{\t_n} +
  \fnorm{P_{\t_n} V^{2\alpha}}{V^{2\alpha}} \right\}< \infty \eqsp,
\]
it follows that $n^{-1} \sum_{k=1}^n V^{2\alpha}(X_k) \fnorm{G_{\t_k}-
  G_{\t_{k-1}}}{V^{2 \alpha}}$ converges to zero in probability provided that
\[
n^{-1} \sum_{k=1}^n \left[ D_{V^{2\alpha}}(\t_k,\t_{k-1}) +
  D_{V^\alpha}(\t_k,\t_{k-1}) \right] V^{2 \alpha}(X_k) \plim 0 \eqsp,
\]
which follows from Lemma~\ref{lem:bienpratique}. Similarly, it  holds
\begin{multline*}
  |H_{\t}(x) -H_{\t'}(x) | \leq \left| P_\t \operpoisson_\t f(x) - P_{\t'} \operpoisson_{\t'} f(x)
  \right|\left| P_\t \operpoisson_\t f(x) + P_{\t'} \operpoisson_{\t'} f(x) \right| \\
  \leq M_{\t,\t'} \ \left| P_\t \operpoisson_\t f(x) - P_{\t'} \operpoisson_{\t'} f(x) \right| \{
  P_\t V^\alpha(x) + P_{\t'} V^\alpha(x) \} \eqsp.
\end{multline*}
Along the same lines as above, using Lemmas~\ref{lem:bienpratique} and
\ref{lem:regularity-in-theta:poisson}, we prove that $n^{-1} \sum_{k=1}^n
V^{2\alpha}(X_k) \fnorm{H_{\t_k}- H_{\t_{k-1}}}{V^{2 \alpha}}$ converges to $0$
in probability.

We now consider the second step and prove~(\ref{eq:prop:A3:A4:IT:tool2}).
To that goal, we have to strengthen the conditions on $f$ by assuming that $f$
is continuous.  For any $\t \in \Tset$, $ \int \pi_\t(\rmd x) F_\t(x) = \int
\pi_\t(\rmd x) H_\t(x)$ with
\begin{equation}
  \label{eq:fonction:Htheta}
H_\t(x) \eqdef \left(\operpoisson_\t f
\right)^2(x) -  \left( P_\t \operpoisson_\t f \right)^2(x)
\eqsp.
\end{equation}
We prove that there exists $\Omega_\star$ with $\PP(\Omega_\star)=1$
and for any $\omega \in \Omega_\star$,
  \begin{enumerate}[(I)]
  \item \label{item:weak-convergence-h} for any continuous bounded function $h$, $\lim_n \pi_{\t_n(\omega)}(h) = \pi_{\t_\star}(h)$,
  \item \label{item:H_t_n-equicontinuous} the set $\{H_{\t_n(\omega)}, n \geq 0
    \}$ is equicontinuous,
  \item \label{item:sup_n pi_t_n_omega} $\sup_n \pi_{\t_n(\omega)}\left(|H_{\t_n(\omega)}|^{1/(2\alpha)}\right) < +\infty$,
  \item \label{item:lim_n_H_t_n_omega} $\lim_n H_{\t_n(\omega)}(x) = H_{\t_\star}(x)$ for any $x \in \Xset$,
  \item  \label{item:pi_t_star} $\pi_{\t_\star}(|H_{\t_\star}|) < +\infty$.
  \end{enumerate}
  The proof is then concluded by application of
  Lemma~\ref{lem:mun:fn:convergence}.

\begin{proof}[Proof of \eqref{item:weak-convergence-h}]
  Under the conditions I\ref{EES:Pi} and
  I\ref{E:KernelP}\ref{E:Irred}-\ref{E:Drift}-\ref{E:Minor},
  I\ref{E:ProcY}\ref{E:ProcY:item1}-\ref{E:ProcY:item2} and $\PE[V(X_0)]< +
  \infty$, \cite[Proposition 3.3.]{fort:moulines:priouret:2010} proves that
  this condition holds for any $\omega \in \Omega_1$ such that $\PP(\Omega_1)=1$.
\end{proof}

\begin{proof}[Proof of \eqref{item:H_t_n-equicontinuous}]
  Let $C_\t, \rho_\t$ be given by P[$\alpha$]. For any constants $C,v>0$ and
  $\rho \in (0,1)$, set
  \begin{equation}
    \label{eq:Tset:restricted}
    \Tset_{C,\rho,v} \eqdef \{ \t \in \Tset \ \text{s.t.}  \ C_\t \leq C,
  \rho_\t \leq \rho, \t(V) \leq v\} \eqsp.
  \end{equation}
  By (\ref{eq:IT:LthetaCvg}) and
  I\ref{E:ProcY}-\ref{E:ProcY:item2},
\begin{equation}\label{eq:IT:controleergo}
\limsup_n C_{\t_n} < +\infty \eqsp, \PP-\as \qquad \limsup_n \rho_{\t_n} < 1
\eqsp,  \PP-\as
\end{equation}  and $\limsup_n \t_n(V) < +\infty$ w.p.1.  Therefore, it is
sufficient to prove that the set $\{ H_\t, \t \in \Tset_{C,\rho,v} \}$ is
equicontinuous.

Let $C, v>0$ and $\rho \in(0,1)$ be fixed.  Observe that by definition of
$L_\t$ (see (\ref{eq:DefinitionLtheta})) and  (\ref{eq:Controle:FctPoisson})
  \begin{equation}
    \label{eq:IT:equicontinuite:tool1}
 \sup_{\t \in  \Tset_{C,\rho,v}}  \fnorm{\operpoisson_\t f}{V^\alpha} \leq \fnorm{f}{V^\alpha} \ \left(C \vee
  (1-\rho)^{-1} \right)^2 \eqsp.
  \end{equation}
  For any $x,x' \in \Xset$ and any $\t \in \Tset_{C,\rho,v}$,
  \begin{multline*}
    \left| H_\t(x) - H_\t(x') \right| \leq \left| \operpoisson_\t f(x) +
      \operpoisson_{\t} f(x') \right|\left| \operpoisson_\t f(x)
      -  \operpoisson_{\t} f(x') \right| \\
    + \left|P_\t \operpoisson_\t f(x) + P_\t \operpoisson_\t f(x') \right|
    \left(\left| \operpoisson_\t f(x) - \operpoisson_\t f(x') \right| + \left| f(x) - f(x') \right| \right)  \eqsp,
  \end{multline*}
  where we have used $P_\theta \operpoisson_\t f(x) - P_\theta \operpoisson_\t f(x') = (\operpoisson_\t - \Id) [f(x) - f(x')] $.
  By \eqref{eq:prop:A1:IT:drift} and \eqref{eq:IT:equicontinuite:tool1},
  for any $\t \in \Tset_{C,\rho,v}$,
\begin{equation}
\label{eq:IT:equicontinuite:tool2}
\left|P_\t \operpoisson_\t f(x) \right| \leq
\fnorm{f}{V^\alpha} \ \left(C \vee (1-\rho)^{-1} \right)^2 \left( V(x) +  \tilde b v \right) \eqsp.
\end{equation}
Therefore, since $f$ and $V$ are continuous, it suffices to prove that the set
$\{\operpoisson_{\t} f, \t \in \Tset_{C,\rho,v} \}$ is equicontinuous.
Lemma~\ref{lem:equicontinuity:Ptheta} and
I\ref{E:KernelP}-\ref{E:equicontinuite} imply that the set $\{P_\t f, \t \in
\Tset_{C,\rho,v} \}$ is equicontinuous. Repeated applications of this Lemma
shows that for any $\ell \geq 1$, the set $\{P_\t^\ell f, \t \in
\Tset_{C,\rho,v} \}$ is equicontinuous.  By Proposition~\ref{prop:A1:IT}, we
have for any $\t \in \Tset_{C,\rho,v}$,
\begin{multline*}
  \left| \operpoisson_\t f(x) - \operpoisson_\t f(x') \right| \leq
  \sum_{k=0}^{n-1}
  \left| P_\t^k f(x) - P_\t^k f(x') \right| \\
  + 2 \rho^n \, C \, (1-\rho)^{-1} \ \left(V(x) + V(x')\right) \eqsp.
\end{multline*}
Then, the set $\{\operpoisson_\t f, \t \in \Tset_{C,\rho,v} \}$ is
equicontinuous.
\end{proof}

\begin{proof}[Proof of \eqref{item:sup_n pi_t_n_omega}] By (\ref{eq:IT:LthetaCvg}) and (\ref{eq:limsuppitheta}),  there exists $\Omega_3$ such that $\PP(\Omega_3) =1$
  and for any $\omega \in \Omega_3$, \eqref{item:sup_n pi_t_n_omega} holds if,
  for any constants $C,v>0$ and $\rho \in (0,1)$,
\[
\sup_{\t \in \Tset_{C,\rho,v}} \fnorm{H_{\t}}{V^{2\alpha}}^{1/2\alpha} \,
\pi_\t(V) < +\infty \eqsp,
\]
where $\Tset_{C,\rho,v}$ is defined by (\ref{eq:Tset:restricted}).  The bound
on $H_\t$ follows from \eqref{eq:IT:equicontinuite:tool1} and
\eqref{eq:IT:equicontinuite:tool2}. By Lemma~\ref{lem:BoundCandRho} and
Proposition~\ref{prop:A1:IT}, $\sup_{\t \in \Tset_{C,\rho,v}} \pi_\t(V) \leq
(1-\tilde \lambda)^{-1} \ \tilde b \ v$.

%In addition, by (\ref{eq:Controle:FctPoisson}),
%(\ref{eq:IT:equicontinuite:tool1}) and the Jensen's inequality,
%\[
%\sup_{\t \in \Tset_{C,\rho,v}}\fnorm{H_{\t}}{V^{2\alpha}}^{1/(2\alpha)} \leq
%\fnorm{f}{V^\alpha}^2 \left( C \vee (1-\rho)^{-1}\right)^4 \left(1+ \tilde \lambda
%  + \tilde b v \right) \eqsp.
%\]

\end{proof}

\begin{proof}[Proof of~\eqref{item:lim_n_H_t_n_omega}]%[Proof~\eqref{item:lim_n_H_t_n_omega}]
We first prove that for any $x
\in \Xset$, $\lim_n \operpoisson_{\t_n} f(x) \aslim \operpoisson_{\t_\star}
f(x)$.  By Proposition~\ref{prop:A1:IT},
for any $\ell \geq 1$,
\begin{multline}
\label{eq:IT:tool3}
  \left| \operpoisson_{\t_n} f(x) - \operpoisson_{\t_\star} f(x) \right| \leq
  C_{\t_n} \rho_{\t_n}^\ell V(x) + C_{\t_\star} \rho_{\t_\star}^\ell V(x) +
  \left|
    \pi_{\t_n}(f) - \pi_\star(f) \right|  \\
  + \sum_{k=0}^{\ell-1} \left| P_{\t_n}^k f(x) - P_{\t_\star}^k f(x) \right|
  \eqsp.
\end{multline}
From \cite[Proposition 3.3.]{fort:moulines:priouret:2010}, $\pi_{\t_n}(f)-
\pi_{\t_\star}(f) \aslim 0$ since $f$ is continuous. In addition, following the
same lines as in the proof of \cite[Proposition 3.3, Lemma
4.4.]{fort:moulines:priouret:2010}, it holds $P_{\t_n}^k f(x) - P_{\t_\star}^k
f(x) \aslim 0$ for any $k$. Therefore, by (\ref{eq:IT:controleergo}),
\eqref{eq:IT:tool3} shows that $\lim_n \operpoisson_{\t_n} f(x) \aslim
\operpoisson_{\t_\star} f(x)$.

It remains to prove that $\lim_n P_{\t_n} \operpoisson_{\t_n} f(x) \aslim
P_{\t_\star} \operpoisson_{\t_\star} f(x)$. This is a consequence of the above discussion and the equality
\[
P_{\t_n} \operpoisson_{\t_n} f(x) - P_{\t_\star} \operpoisson_{\t_\star} f(x) =
\operpoisson_{\t_n} f(x) - \operpoisson_{\t_\star} f(x) +\pi_{\t_n}(f) - \pi_{\t_\star}(f) \eqsp,
\]
which follows from~(\ref{eq:PoissonEquation}).

Combining the two results above, for any $x \in \Xset$, $H_{\t_n}(x) \aslim H_{\t_\star}(x)$ as $n
\to +\infty$. Since $\Xset$ is Polish, there exists a countable dense subset
$\mathcal{D}$ of $\Xset$ and a set $\Omega_4$ with $\PP(\Omega_4) =1$ such that
for any $\bar x \in \mathcal{D}$ and any $\omega\in \Omega_4$,
\[
\lim_n H_{\t_n(\omega)}(\bar x) = H_{\t_\star}(\bar x) \eqsp.
\]
The proof is concluded by the inequality
\begin{multline*}
  \left| H_{\t_n(\omega)}(x) - H_{\t_\star}( x) \right| \leq \left|
    H_{\t_n(\omega)}(x) - H_{\t_n(\omega)}(\bar x) \right| \\
  + \left| H_{\t_n(\omega)}(\bar x)-H_{\t_\star}(\bar x) \right| +
  \left|H_{\t_\star}(\bar x) - H_{\t_\star}( x) \right| \eqsp,
\end{multline*}
the continuity of $H_{\t_\star}$  and \eqref{item:H_t_n-equicontinuous}.
\end{proof}

\begin{proof}[Proof~\eqref{item:pi_t_star}]
  Since $H_{\t_\star} \in \L_{V^{2\alpha}}$, this is a consequence of
  Lemma~\ref{lem:BoundCandRho} and Assumption
  I\ref{E:ProcY}-\ref{E:ProcY:item1}.
\end{proof}
\end{proof}

\subsubsection{Proof of Theorem~\ref{theo:IT}}
\label{sec:proof:theo:IT}
We check the conditions of Theorem~\ref{theo:TCL}.
\textbf{A\ref{hyp:geometric-ergodicity}} to \textbf{A\ref{hyp:X_conditionellement_markov}} hold
(see Propositions~\ref{prop:A1:IT} and \ref{prop:A3:A4:ICMCM}) and we now
prove \textbf{A\ref{hyp:variance:TCLsurPi}}. We first check condition
\textbf{A\ref{hyp:variance:TCLsurPi}}-\ref{hyp:variance:TCLsurPi:lineaire}.  For any
function $f \in \L_{V^\alpha} \cap \mathcal{N}$, define
\begin{equation}
  \label{eq:fonction:Gf:EE}
  G_f(z)  \eqdef \epsilon \iint \left( \delta_z(\rmd z') - \t_\star(\rmd
  z') \right) \pi_{\t_\star}(\rmd x)\ratio(x,z') \left(
  \operpoisson_{\t_\star}f(z') - \operpoisson_{\t_\star}f(x) \right) \eqsp.
\end{equation}
Let $f \in \L_{V^\alpha} \cap \mathcal{N}$; note that $G_f \in \L_{V^\alpha}$.
Recall that by Eq. \eqref{eq:definition-Pt-EE}, for any $\t$ such that
$\t(V^\alpha)<+\infty$,
\begin{equation}
  \label{eq:EE:variationP}
  P_\t f(x) - P_{\t_\star}f(x) = \epsilon \int \left[ \t(\rmd y) -\t_\star(\rmd y) \right] \ratio(x,y) \left( f(y) -f(x)
\right)  \eqsp.
\end{equation}
Then, using (\ref{eq:fonction:Gf:EE}),
\begin{multline*}
  \pi_{\t_\star} \left(P_{\t_k} - P_{\t_\star} \right) \operpoisson_{\t_\star}
  f  \\
  = \epsilon \iint \pi_{\t_\star}(\rmd x) \left[ \t_k(\rmd z) -\t_\star(\rmd z) \right]
  \ratio(x,z) \left[ \operpoisson_{\t_\star}f(z) -
    \operpoisson_{\t_\star}f(x) \right] = \theta_k(G_f) \eqsp.
\end{multline*}
Therefore,
\begin{multline*}
  \frac{1}{\sqrt{n}} \sum_{k=1}^n \pi_{\t_\star} \left(P_{\t_k} - P_{\t_\star}
  \right) \operpoisson_{\t_\star} f = \frac{1}{n} \sum_{k=1}^n
  \frac{n}{k} \frac{1}{\sqrt{n}} \sum_{j=1}^k G_f(Y_j) \\
  = \int_0^1 t^{-1} S_n(t) \rmd t + \sum_{k=1}^{n-1} \int_{k/n}^{(k+1)/n}
  \left( \frac{n}{k} - \frac{1}{t} \right) S_n(t) \rmd t + \frac{1}{n} S_n(1)
  \eqsp,
\end{multline*}
with $S_n(t) \eqdef n^{-1/2}\sum_{j=1}^{[nt]} G_f(Y_j)$. Note that
\[
\PE\left[ \left| \sum_{k=1}^{n-1} \int_{k/n}^{(k+1)/n} \left( \frac{n}{k} - \frac{1}{t}
  \right) S_n(t) \rmd t \right| \right] \leq \frac{1}{\sqrt n} \sum_{k=1}^n
\frac{1}{k+1} \frac{1}{k} \sum_{j=1}^k \PE\left[ \left| G_f(Y_j) \right|  \right] \eqsp.
\]
Since $G_f \in \L_{V^\alpha}$, I\ref{E:ProcY}-\ref{E:ProcY:item1} implies that
$\sup_{k \geq 0} \PE[|G_f|(Y_k)] < \infty$.  Therefore,
$$
\sum_{k=1}^{n-1} \int_{k/n}^{(k+1)/n} \left( \frac{n}{k} - \frac{1}{t} \right) S_n(t) \rmd t + \frac{1}{n} S_n(1) \plim 0 \eqsp.
$$
Using I\ref{E:ProcY}-\ref{E:ProcY:item3} and the Continuous mapping Theorem (\cite[Theorem 1.3.6]{vandervaart:wellner:1996}),
we obtain
\[
  \frac{1}{\sqrt{n}} \sum_{k=1}^n \pi_{\t_\star} \left(P_{\t_k} - P_{\t_\star}
  \right) \operpoisson_{\t_\star} f \dlim \tilde \gamma^2(f) \int_0^1 t^{-1} B_t \rmd t \eqsp.
\]
Since $\int_0^1 t^{-1} B_t \rmd t = \int_0^1 \log(t) \rmd B_t$, $\int_0^1 t^{-1} B_t \rmd t$ is a Gaussian random variable with zero mean and variance $\int_0^1 \log^2(t) \rmd t = 2$.

\bigskip

We now check condition
\textbf{A\ref{hyp:variance:TCLsurPi}}-\ref{hyp:variance:TCLsurPi:reste}. Note that
\[
n^{-1/2} \sum_{k=1}^n \pi_{\t_k} \left(P_{\t_k} - P_{\t_\star} \right)
\Lambda_{\t_\star} \left(P_{\t_k} - P_{\t_\star} \right) \Lambda_{\t_\star} f
=n^{-1/2} \sum_{k=1}^n \pi_{\t_k} (G^{f}_{\t_k})  \eqsp,
\]
where
\begin{equation}
\label{eq:definition-G-theta}
G^{f}_{\t} (x) \eqdef \left(P_{\t} - P_{\t_\star} \right) \Lambda_{\t_\star}
\left(P_{\t} - P_{\t_\star} \right) \Lambda_{\t_\star} f(x) \eqsp.
\end{equation}
We write for any $x \in \Xset$ and any $\ell_k \in \Nset$,
\[
\pi_{\t_k}(G^{f}_{\t_k}) = \left(\pi_{\t_k} -P_{\t_k}^{\ell_k} \right)G^{f}_{\t_k} (x) +
\left( P_{\t_k}^{\ell_k} - P_{\t_\star}^{\ell_k} \right)G^{f}_{\t_k} (x) +
P_{\t_\star}^{\ell_k}G^{f}_{\t_k} (x) \eqsp.
\]
By Proposition~\ref{prop:A1:IT}, P[$\alpha$] holds and there exist $C_\t,
\rho_\t$ such that $\Vnorm{P_\t^n - \pi_\t}{V^\alpha} \leq C_\t \rho_\t^n$.
Furthermore, Lemma~\ref{lem:explicit:control} and
I\ref{E:ProcY}\ref{E:ProcY:item2} imply that $\limsup_n C_{\t_n} < +\infty$
w.p.1. and there exists a constant $\rho \in (0,1)$ such that $\limsup_n
\rho_{\t_n} \leq \rho$, w.p. 1.  Set $\ell_k \eqdef \lfloor \ell \ln k \rfloor$
with $\ell$ such that $1/2+\ell \ln \rho < 0$. Let $x \in \Xset$ be fixed.

By Lemma~\ref{lem:prop:EE:remainder:term1} and
I\ref{E:ProcY}-\ref{E:ProcY:item2}, there exists an almost surely finite random
variable $C_1$ s.t.
\[
\left| \frac{1}{\sqrt{n}} \sum_{k=1}^n\left(\pi_{\t_k} -P_{\t_k}^{\ell_k}
  \right)G^{f}_{\t_k}(x) \right| \leq C_1  V^{\alpha}(x)   n^{-1/2} \sum_{k=1}^n
 \rho^{\ell_k} \eqsp.
\]
Since $n^{-1/2} \sum_{k=1}^n \rho^{\ell_k} \leq\rho^{-1} n^{-1/2} \sum_{k=1}^n k^{\ell \ln \rho} \to_{n
  \to \infty} 0$,  it holds
\[
\frac{1}{\sqrt{n}} \sum_{k=1}^n\left(\pi_{\t_k} -P_{\t_k}^{\ell_k}
\right)G^{f}_{\t_k}(x) \aslim 0 \eqsp.
\]

By Lemma~\ref{lem:prop:EE:remainder:term2}, there exist some positive constants
$C_2, \kappa_\star, a$ such that
\[
\PE\left[\left( \sum_{k=1}^n \{P_{\t_k}^{\ell_k} - P_{\t_\star}^{\ell_k} \}
    G^{f}_{\t_k}(x) \right)^2 \right]^{1/2} \leq C_2 \fnorm{f}{V^\alpha}
V^\alpha(x) \ \sum_{k=1}^n \frac{1}{k} \sum_{t=1}^{\ell_k-1} \left(\frac{\kappa_\star
  \ell_k }{k^{1/(2a)}} \right)^{at} \eqsp.
\]
Since $\lim_k \ell_k^a / k^{1/2} =0$, there exists $k_\star$ such that for $k \geq
k_\star$, $(\kappa_\star \ell_k)^a/k^{1/2} \leq 1/2$. Then,
\[
\frac{1}{\sqrt{n}} \sum_{k=1}^n \frac{1}{k} \sum_{t=1}^{\ell_k}
\left(\frac{\kappa_\star \ell_k}{k^{1/(2a)}} \right)^{at} \leq \frac{1}{\sqrt{n}}
\sum_{k=1}^{k_\star} \frac{1}{k} \sum_{t=1}^{\lceil \ell \ln k \rceil}
\left(\frac{\kappa_\star \ell_k}{k^{1/(2a)}} \right)^{at} + \frac{2}{\sqrt{n}} \sum_{k =
  k_\star+1}^n \frac{1}{k} \eqsp.
\]
The RHS tends to zero when $n \to +\infty$, which proves that $n^{-1/2}
\sum_{k=1}^n \{P_{\t_k}^{\ell_k} - P_{\t_\star}^{\ell_k} \} G^{f}_{\t_k}(x)
\plim 0$.

Finally, by Lemma~\ref{lem:prop:EE:remainder:term3}, there exists a constant
$C_3$ such that
\[
\PE\left[\left( \frac{1}{\sqrt{n}} \sum_{k=1}^n
    P_{\t_\star}^{\ell_k}G^{f}_{\t_k}(x) \right)^2 \right]^{1/2} \leq C_3
V^\alpha(x) \ \frac{1}{\sqrt{n}} \sum_{k=1}^n \frac{\ell_k^\alpha }{k} \to_{ n \to
  \infty} 0\eqsp,
\]
thus implying that $n^{-1/2} \sum_{k=1}^n P_{\t_\star}^{\ell_k}G^{f}_{\t_k}(x)
\plim 0$.

\begin{lemma}
  \label{lem:prop:EE:remainder:term1}
  Assume I\ref{EES:Pi} and
  I\ref{E:KernelP}\ref{E:Irred}-\ref{E:Drift}-\ref{E:Minor}.  Let $\alpha \in
  (0,1/2)$.  For any $f \in \L_{V^\alpha}$ and $\t\in \Tset$,
\[
G^{f}_\t(x) = \int (\t - \t_\star)^{\otimes 2}(\rmd z_{1:2}) \ F^{(0)}(x,z_1,z_2)
\eqsp,
\]
where $G_\t^f$ is defined by (\ref{eq:definition-G-theta}); and there exists a
constant $C$ such that for any $x \in \Xset$,
\[
\left| F^{(0)}(x,z_1,z_2) \right| \leq C \fnorm{f}{V^\alpha} \ V^{\alpha \wedge
  (\beta/ \tau)}(x) \ \left(V^\alpha(z_1) + V^\alpha(z_2) \right) \eqsp.
\]
In addition, there exists some constant $C'$ such that for any $\ell \in
\Nset$, any $\t \in \Tset$ and any $f \in \L_{V^\alpha}$,
\[
\fnorm{ \left(\pi_{\t} -P_{\t}^{\ell} \right) G^{f}_{\t}}{V^\alpha} \leq C'
\fnorm{f}{V^\alpha} \ \Vnorm{P_\t^\ell - \pi_\t}{V^\alpha} \ \t(V^\alpha)
\eqsp.
\]
\end{lemma}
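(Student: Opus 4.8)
The plan is to establish the three assertions in turn; the third follows easily from the second. Throughout write $g_{1}\eqdef\operpoisson_{\t_\star}f$, so that by~\eqref{eq:Controle:FctPoisson} $g_{1}\in\L_{V^\alpha}$ with $\fnorm{g_{1}}{V^\alpha}\leq L_{\t_\star}^{2}\fnorm{f}{V^\alpha}$; here $L_{\t_\star}$ is a fixed finite constant (P$[\alpha]$ holds for $\t_\star$ by Proposition~\ref{prop:A1:IT}). Note also that under I\ref{E:KernelP}\ref{E:Irred} the unique invariant distribution of $P_{\t_\star}$ is $\pi$, so $\pi_{\t_\star}=\pi$, and by the drift I\ref{E:KernelP}\ref{E:Drift} one has $\pi(V^{\gamma})<\infty$ for every $\gamma\in\ccint{0,1}$. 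We work with $\t$ such that $\t(V^\alpha)<\infty$, which is all that is needed (and is the only case relevant for the applications).

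\textbf{Bilinear representation.} I would iterate~\eqref{eq:EE:variationP} twice. For the innermost factor, $(P_\t-P_{\t_\star})g_{1}(w)=\int(\t-\t_\star)(\rmd z_{1})\,\psi_{z_{1}}(w)$ with $\psi_{z_{1}}(w)\eqdef\epsilon\,\ratio(w,z_{1})\bigl(g_{1}(z_{1})-g_{1}(w)\bigr)$. Since $|\psi_{z_{1}}(w)|\leq\epsilon\fnorm{g_{1}}{V^\alpha}(V^\alpha(z_{1})+V^\alpha(w))$, Fubini's theorem together with the geometric decay in P$[\alpha]$ let me push $\operpoisson_{\t_\star}$ through the integral: $\operpoisson_{\t_\star}(P_\t-P_{\t_\star})g_{1}=\int(\t-\t_\star)(\rmd z_{1})\operpoisson_{\t_\star}\psi_{z_{1}}$. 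Applying~\eqref{eq:EE:variationP} a second time to the outer factor and interchanging the two integrations gives $G^{f}_{\t}(x)=\int(\t-\t_\star)^{\otimes2}(\rmd z_{1:2})\,\epsilon\,\ratio(x,z_{2})\bigl(\operpoisson_{\t_\star}\psi_{z_{1}}(z_{2})-\operpoisson_{\t_\star}\psi_{z_{1}}(x)\bigr)$. Writing $\psi_{z_{1}}=\epsilon\,g_{1}(z_{1})\,\ratio(\cdot,z_{1})-\epsilon\,\ratio(\cdot,z_{1})g_{1}(\cdot)$ and using linearity of $\operpoisson_{\t_\star}$ splits the integrand into four pieces. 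Because $(\t-\t_\star)(\Xset)=0$, any function of $(x,z_{1})$ or of $(x,z_{2})$ may be added to the integrand without changing $G^{f}_{\t}$; I would use this to replace the factor $\ratio(x,z_{2})$ in the piece containing $\operpoisson_{\t_\star}[\ratio(\cdot,z_{1})g_{1}(\cdot)](x)$ by $\ratio(x,z_{2})-\pi_{\t_\star}(\ratio(x,\cdot))$. The resulting function is $F^{(0)}$, and the first assertion is proved.

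\textbf{Pointwise bound on $F^{(0)}$ (the crux).} The naive estimates give a multiplicative dependence $V^\alpha(z_{1})V^\alpha(z_{2})$ and a $V^\alpha(x)$, the latter being too weak when $\beta/\tau<\alpha$; the remedy is the decay of $\ratio$ in its far argument. The key elementary facts, each proved by splitting $\Xset$ into $\{V\geq V(z)\}$ and $\{V<V(z)\}$ and using $\ratio(x,y)\leq1\wedge\bigl(\pi^{\beta}(y)/\pi^{\beta}(x)\bigr)$ with $V\propto\pi^{-\tau}$, are: (a) $\fnorm{\ratio(\cdot,z)}{V^{\gamma}}\leq V^{-(\gamma\wedge(\beta/\tau))}(z)$ for $\gamma\in\ocint{0,1}$; (b) $\fnorm{\ratio(\cdot,z)g_{1}(\cdot)}{V^{\delta}}\leq\fnorm{g_{1}}{V^\alpha}\,V^{-((\delta-\alpha)\wedge(\beta/\tau))}(z)$ for $\delta\geq\alpha$; (c) $\fnorm{\ratio(x,\cdot)-\pi_{\t_\star}(\ratio(x,\cdot))}{V^{\gamma}}\leq(1+\pi(V^{\gamma}))\,V^{-\gamma}(x)$, obtained from $1\wedge t=1-(1-t)_{+}$ (so $1-\ratio(x,\cdot)$ is supported on $\{V>V(x)\}$) together with Markov's inequality for $\pi_{\t_\star}(\{V>V(x)\})$. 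Feeding these, and $\fnorm{\operpoisson_{\t_\star}u}{V^{\gamma}}\leq L_{\t_\star}^{2}\fnorm{u}{V^{\gamma}}$ ($\gamma\in\ocint{0,1}$), into the four pieces of $F^{(0)}$ and choosing the free exponent $\gamma$ term by term — $\gamma=\alpha\wedge(\beta/\tau)$ for the pieces carrying $\operpoisson_{\t_\star}\ratio(\cdot,z_{1})$ evaluated at $x$, and $\gamma=(\alpha-\beta/\tau)\vee 0$ together with (c) for the rebalanced diagonal piece — bounds each piece by $C\fnorm{f}{V^\alpha}\,V^{\alpha\wedge(\beta/\tau)}(x)\,V^{s}(z_{1})V^{t}(z_{2})$ with $s,t\geq0$ and $s+t\leq\alpha$; Young's inequality $V^{s}(z_{1})V^{t}(z_{2})\leq V^\alpha(z_{1})+V^\alpha(z_{2})$ then yields the stated bound. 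The rebalancing by $\pi_{\t_\star}(\ratio(x,\cdot))$ is exactly what turns the dangerous $V^\alpha(x)$ of the diagonal piece into $V^{\beta/\tau}(x)$ when $\beta/\tau<\alpha$; this is the delicate point of the whole lemma.

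\textbf{The $V^\alpha$-estimate.} Integrating the bound on $F^{(0)}$ against $|\t-\t_\star|^{\otimes2}$ and using $|\t-\t_\star|(\Xset)\leq2$, $|\t-\t_\star|(V^\alpha)\leq\t(V^\alpha)+\pi(V^\alpha)$, $\t(V^\alpha)\geq1$ and $\pi(V^\alpha)<\infty$, gives $\fnorm{G^{f}_{\t}}{V^\alpha}\leq C\fnorm{f}{V^\alpha}\,\t(V^\alpha)$. Finally, for any $g\in\L_{V^\alpha}$, since $\pi_\t=\pi_\t P_\t^{\ell}$ one has $(\pi_\t-P_\t^{\ell})g(x)=(\pi_\t-\delta_{x}P_\t^{\ell})(g)$, hence $|(\pi_\t-P_\t^{\ell})g(x)|\leq\fnorm{g}{V^\alpha}\Vnorm{\delta_{x}P_\t^{\ell}-\pi_\t}{V^\alpha}\leq\fnorm{g}{V^\alpha}\,V^\alpha(x)\,\Vnorm{P_\t^{\ell}-\pi_\t}{V^\alpha}$, i.e. $\fnorm{(\pi_\t-P_\t^{\ell})g}{V^\alpha}\leq\Vnorm{P_\t^{\ell}-\pi_\t}{V^\alpha}\fnorm{g}{V^\alpha}$. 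Taking $g=G^{f}_{\t}$ and inserting the previous bound gives the third assertion.
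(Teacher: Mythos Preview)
Your argument is correct, and the overall shape --- iterate~\eqref{eq:EE:variationP} twice, push $\operpoisson_{\t_\star}$ through by Fubini, then bound the resulting kernel pointwise, and finally deduce the $V^\alpha$-norm estimate from $\fnorm{G^{f}_{\t}}{V^\alpha}\leq C\fnorm{f}{V^\alpha}\t(V^\alpha)$ --- is exactly the paper's. The third assertion is obtained identically in both.

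The one genuine difference lies in the pointwise bound on $F^{(0)}$. The paper keeps the explicit kernel
\[
F^{(0)}(x,z_1,z_2)=\epsilon^2\,\ratio(x,z_2)\Bigl[\operpoisson_{\t_\star}\psi_{z_1}(z_2)-\operpoisson_{\t_\star}\psi_{z_1}(x)\Bigr],\qquad \psi_{z_1}(y)=\ratio(y,z_1)\bigl(\operpoisson_{\t_\star}f(z_1)-\operpoisson_{\t_\star}f(y)\bigr),
\]
and bounds the four sub-terms directly, the key tool being $\ratio(x,z)V^{\gamma}(z)\leq V^{\gamma}(x)$ for $\gamma=\alpha\wedge(\beta/\tau)$. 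Your version instead \emph{modifies} $F^{(0)}$ by subtracting the null-integrating term $\epsilon^{2}\,\pi_{\t_\star}(\ratio(x,\cdot))\,\operpoisson_{\t_\star}[\ratio(\cdot,z_1)g_1](x)$ before bounding, and then uses your estimate~(c) on the centred acceptance ratio to convert the raw $V^{\alpha}(x)$ of the diagonal piece $\ratio(x,z_2)\,\operpoisson_{\t_\star}[\ratio(\cdot,z_1)g_1](x)$ into $V^{\beta/\tau}(x)$. This rebalancing is a neat device: for that diagonal piece the paper's ``similar upper bounds'' are not entirely transparent once $\alpha>\beta/\tau$ (the naive estimate produces $V^{\alpha}(x)$ rather than $V^{\alpha\wedge(\beta/\tau)}(x)$), whereas your centring makes the required exponent fall out cleanly for every $\alpha\in(0,1/2)$. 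Since $F^{(0)}$ is only used through its integral against $(\t-\t_\star)^{\otimes 2}$ and through the recursion of Lemma~\ref{lem:expression-F}, either representative serves equally well downstream.
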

\begin{proof}
  Set $\gamma \eqdef \alpha \wedge (\beta/ \tau)$. Throughout this proof, let
  $L_\t$ be the constant given by P[$\gamma$]. We have
\begin{multline*}
\label{eq:definition-F-0}
F^{(0)}(x,z_1,z_2) \eqdef    \epsilon^2  \ratio(x,z_2) \left[ \int \operpoisson_{\t_\star}(z_2,\rmd y)
  \ratio(y,z_1) \left(\operpoisson_{\t_\star} f(z_1) -\operpoisson_{\t_\star} f(y) \right) \right. \\
  \left. -  \int \operpoisson_{\t_\star}(x,\rmd y) \ratio(y,z_1) \left(\operpoisson_{\t_\star} f(z_1)
    -\operpoisson_{\t_\star} f(y) \right) \right] \eqsp.
\end{multline*}
Note that $\fnorm{r(\cdot, z_1)}{V^\gamma} \leq 1$ for any $z_1$ so that by
(\ref{eq:Controle:FctPoisson}),
\[
\left| \int \operpoisson_{\t_\star}(z_2,\rmd y) \ratio(y,z_1)
  \operpoisson_{\t_\star} f(z_1) \right| \leq L_{\t_\star}^4
\fnorm{f}{V^\alpha} V^\alpha(z_1) \  V^\gamma(z_2) \eqsp.
\]
In addition, since $\gamma - \beta/\tau\leq 0$, we have by definition of the
acceptance ratio $r$ (see (\ref{eq:definition-alpha}))
\[
r(x,z_2) V^\gamma(z_2) \leq V^\gamma(x) \eqsp.
\]
Then, there exists a constant $C$ such that
\[
\epsilon^2 \ratio(x,z_2) \, \left| \int \operpoisson_{\t_\star}(z_2,\rmd y)
  \ratio(y,z_1) \operpoisson_{\t_\star} f(z_1) \right| \leq C
\fnorm{f}{V^\alpha} V^\alpha(z_1) \ V^\gamma(x) \eqsp.
\]
Similar upper bounds can be obtained for the three remaining terms in
$F^{(0)}$, thus showing the upper bounds on $F^{(0)}$.

In addition, by P[$\gamma$]
\[
\fnorm{\left(\pi_{\t} -P_{\t}^{\ell} \right) G^{f}_{\t} f(x)}{V^\alpha} \leq
\Vnorm{\pi_{\t} -P_{\t}^{\ell}}{V^\alpha} \ \ \fnorm{G^{f}_{\t} f}{V^\alpha}
V^\alpha(x)\eqsp.
\]
The proof is concluded upon noting that $|G_\t^f(x)| \leq C \fnorm{f}{V^\alpha}
\t(V^\alpha)$.
\end{proof}

\begin{lemma}
\label{lem:expression-F}
Assume I\ref{EES:Pi} and
I\ref{E:KernelP}\ref{E:Irred}-\ref{E:Drift}-\ref{E:Minor}. Let $\alpha \in
(0,1/2)$. There exist some constants $C,\kappa_\star$ and $\rho_\star \in
(0,1)$ such that for any $t \geq 1$, any integers $u_1, \cdots, u_t$ and any $f
\in \L_{V^\alpha}$,
  \begin{multline*}
    \left(P_{\t} - P_{\t_\star} \right) \left( P^{u_t}_{\t_\star} -
      \pi_{\t_\star} \right) \cdots \left(P_{\t} - P_{\t_\star} \right)
    \left( P^{u_1}_{\t_\star} - \pi_{\t_\star} \right)  G^{f}_{\t} (x) \\
    = \idotsint \left( \t - \t_\star \right)^{\otimes (t+2)}(\rmd z_{1:t+2}) \
    F^{(t)}_{u_{1:t}}(x,z_1,\cdots, z_{t+2})
  \end{multline*}
  where $G_\t^f$ is defined in (\ref{eq:definition-G-theta}), and
\begin{equation} \label{eq:bound-F}
\left| F^{(t)}_{u_{1:t}}(x,z_1,\cdots, z_{t+2})  \right|\leq
C \fnorm{f}{V^\alpha} \kappa_\star^t \ \rho_{\star}^{\sum_{j=1}^t u_j}  V^{\alpha \wedge (\beta/\tau)}(x)  \  \sum_{j=1}^{t+2} V^\alpha(z_j) \eqsp.
\end{equation}
\end{lemma}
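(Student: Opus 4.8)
The plan is to prove the representation and the bound \eqref{eq:bound-F} simultaneously by induction on $t$, peeling off the leftmost block $(P_\t - P_{\t_\star})(P^{u_t}_{\t_\star} - \pi_{\t_\star})$ at each step. Set $\gamma \eqdef \alpha \wedge (\beta/\tau)$, and let $C_{\t_\star}$ and $\rho_{\t_\star} \in (0,1)$ be the fixed, finite constants supplied by P[$\gamma$] for the fixed kernel $P_{\t_\star}$; this property is available because $\gamma \in (0,1/2)$, by Proposition~\ref{prop:A1:IT}. The base case $t=0$ is exactly Lemma~\ref{lem:prop:EE:remainder:term1}: it gives $G^f_\t(x) = \int (\t - \t_\star)^{\otimes 2}(\rmd z_{1:2}) F^{(0)}(x,z_1,z_2)$ with $\left| F^{(0)}(x,z_1,z_2) \right| \leq C \fnorm{f}{V^\alpha} V^\gamma(x) \left( V^\alpha(z_1) + V^\alpha(z_2) \right)$, which is \eqref{eq:bound-F} at $t=0$ as soon as $\kappa_\star \geq 1$.

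For the inductive step, let $\Phi_{t-1}$ denote the level-$(t-1)$ version of the operator in the statement, built with $u_1,\dots,u_{t-1}$, so that the operator of interest equals $(P_\t - P_{\t_\star})(P^{u_t}_{\t_\star} - \pi_{\t_\star}) \Phi_{t-1}$, and assume $\Phi_{t-1}(x) = \int (\t - \t_\star)^{\otimes (t+1)}(\rmd z_{1:t+1}) F^{(t-1)}_{u_{1:t-1}}(x,z_{1:t+1})$ together with \eqref{eq:bound-F} at level $t-1$. For each fixed $z_{1:t+1}$, the inductive bound says that $x \mapsto F^{(t-1)}_{u_{1:t-1}}(x,z_{1:t+1})$ belongs to $\L_{V^\gamma}$ with $V^\gamma$-norm at most $C \fnorm{f}{V^\alpha} \kappa_\star^{t-1} \rho_\star^{\sum_{j=1}^{t-1} u_j} \sum_{j=1}^{t+1} V^\alpha(z_j)$, so $(P^{u_t}_{\t_\star} - \pi_{\t_\star})$ may be applied to it; call the result $\tilde F(\cdot,z_{1:t+1})$, so that, by the definition of the $V$-distance of kernels and P[$\gamma$],
\[
\fnorm{\tilde F(\cdot,z_{1:t+1})}{V^\gamma} \leq \Vnorm{P^{u_t}_{\t_\star} - \pi_{\t_\star}}{V^\gamma} \fnorm{F^{(t-1)}_{u_{1:t-1}}(\cdot,z_{1:t+1})}{V^\gamma} \leq C_{\t_\star} \rho_{\t_\star}^{u_t} \, C \fnorm{f}{V^\alpha} \kappa_\star^{t-1} \rho_\star^{\sum_{j=1}^{t-1} u_j} \sum_{j=1}^{t+1} V^\alpha(z_j) \eqsp.
\]
A Fubini interchange, legitimate because $\t(V^\alpha) \vee \t_\star(V^\alpha) < \infty$ and $V^\gamma \leq V^\alpha$, lets $(P^{u_t}_{\t_\star} - \pi_{\t_\star})$ pass under the $(\t - \t_\star)^{\otimes(t+1)}$-integral, giving $(P^{u_t}_{\t_\star} - \pi_{\t_\star}) \Phi_{t-1}(x) = \int (\t - \t_\star)^{\otimes(t+1)}(\rmd z_{1:t+1}) \tilde F(x,z_{1:t+1})$.

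It then remains to apply $(P_\t - P_{\t_\star})$, which, by the explicit identity \eqref{eq:EE:variationP} and one further Fubini interchange, yields the sought representation at level $t$ with
\[
F^{(t)}_{u_{1:t}}(x,z_{1:t+2}) \eqdef \epsilon \, \ratio(x,z_{t+2}) \left( \tilde F(z_{t+2},z_{1:t+1}) - \tilde F(x,z_{1:t+1}) \right) \eqsp.
\]
Bounding $\left| \tilde F(y,z_{1:t+1}) \right| \leq \fnorm{\tilde F(\cdot,z_{1:t+1})}{V^\gamma} V^\gamma(y)$ and using $\ratio(x,z_{t+2}) \leq 1$ together with $\ratio(x,z_{t+2}) V^\gamma(z_{t+2}) \leq V^\gamma(x)$ — valid since $V \propto \pi^{-\tau}$ and $\gamma \leq \beta/\tau$, exactly as in the proof of Lemma~\ref{lem:prop:EE:remainder:term1} — one gets $\left| F^{(t)}_{u_{1:t}}(x,z_{1:t+2}) \right| \leq 2\epsilon\, V^\gamma(x)\, \fnorm{\tilde F(\cdot,z_{1:t+1})}{V^\gamma}$. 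Inserting the bound on $\fnorm{\tilde F}{V^\gamma}$ from the previous display, choosing $\rho_\star \eqdef \rho_{\t_\star}$ and $\kappa_\star \eqdef (2\epsilon C_{\t_\star}) \vee 1$, and using $\sum_{j=1}^{t+1} V^\alpha(z_j) \leq \sum_{j=1}^{t+2} V^\alpha(z_j)$ yields \eqref{eq:bound-F} at level $t$. The only delicate point is the bookkeeping that keeps the prefactor and the geometric factor $\rho_\star^{\sum u_j}$ propagating multiplicatively through the induction — in particular the observation that $F^{(t-1)}(\cdot,z_{1:t+1})$ lies in $\L_{V^\gamma}$ with an explicit $z$-dependent norm, so that P[$\gamma$] can be invoked termwise — together with the two Fubini interchanges; the exponent $\gamma = \alpha \wedge (\beta/\tau)$ rather than $\alpha$ is forced precisely by the inequality $\ratio(x,z)V^\gamma(z) \leq V^\gamma(x)$ used at each application of $(P_\t - P_{\t_\star})$.
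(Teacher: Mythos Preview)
Your argument is correct and follows essentially the same route as the paper: induction on $t$ with the recursion
\[
F^{(t)}_{u_{1:t}}(x,z_{1:t+2}) = \epsilon\,\ratio(x,z_{t+2})\int\bigl(P^{u_t}_{\t_\star}(z_{t+2},\rmd y)-P^{u_t}_{\t_\star}(x,\rmd y)\bigr)F^{(t-1)}_{u_{1:t-1}}(y,z_{1:t+1})
\]
(your $\tilde F$ formulation is the same object, since the $\pi_{\t_\star}$ contributions cancel in the difference), the ergodicity bound P[$\gamma$] with $\gamma=\alpha\wedge(\beta/\tau)$, and the key inequality $\ratio(x,z)V^\gamma(z)\leq V^\gamma(x)$; the paper arrives at the same constant $\kappa_\star=2\epsilon C_{\t_\star}$. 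The only cosmetic differences are that you anchor the induction at $t=0$ via Lemma~\ref{lem:prop:EE:remainder:term1} (the paper starts at $t=1$) and that you spell out the Fubini step and the $\vee 1$ in $\kappa_\star$, which the paper leaves implicit.
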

\begin{proof}
  By repeated applications of Eq.~\eqref{eq:EE:variationP}, it can be proved
  that the functions $F^{(t)}_{\chunk{u}{1}{t}}$ are recursively defined as
  follows
\begin{multline}
    \label{eq:Induction:Ft}
    F_{u_{1:t}}^{(t)}(x, z_1, \cdots, z_{t+2}) \eqdef \epsilon
    \ratio(x,z_{t+2}) \times \\
    \int \left( P^{u_t}_{\t_\star} (z_{t+2},\rmd y) - P^{u_t}_{\t_\star}(x,\rmd
      y) \right) F_{u_{1:t-1}}^{(t-1)}(y, z_1, \cdots, z_{t+1}) \eqsp,
 \end{multline}
 where $F_{u_{1:0}}^{(0)} = F^{(0)}$ and $F^{(0)}$ is given by
 Lemma~\ref{lem:prop:EE:remainder:term1}.

 The proof of the upper bound is by induction.  The property holds for $t=1$.
 Assume it holds for $t \geq 2$.  Set $\gamma \eqdef \alpha \wedge (\beta/
 \tau)$; by Proposition~\ref{prop:A1:IT} and the property P[$\gamma$], there
 exist some constants $C_\star$ and $\rho_\star \in (0,1)$ such that
 $\Vnorm{P_{\t_\star}^\ell - \pi_{\t_\star}}{V^\gamma} \leq C_{\t_\star}
 \rho_{\t_\star}^\ell$.  Then,
\begin{align*}
  &  \left|F_{u_{1:t}}^{(t)}(x, z_{1:t+2}) \right| \leq C \fnorm{f}{V^\alpha} \, \kappa_\star^{t-1}  \rho_{\t_\star}^{\sum_{j=1}^{t-1} u_j}  \ \left(\sum_{j=1}^{t+1} V^\alpha(z_j) \right)\\
  & \phantom{\left|F_{u_{1:t}}^{(t)}(x, z_{1:t+2}) \right| \leq} \times
  \ratio(x,z_{t+2}) \left[ \Vnorm{P^{u_t}_{\t_\star} -
      \pi_{\t_\star}}{V^\gamma} V^\gamma(z_{t+2})  + \Vnorm{P^{u_t}_{\t_\star} - \pi_{\t_\star}}{V^\gamma} V^\gamma(x)\right]  \\
  & \quad \leq C \ \fnorm{f}{V^\alpha} \ \kappa_\star^{t-1} \ \epsilon \
  C_{\t_\star} \ \rho_{\t_\star}^{ \sum_{j=1}^t u_{j}} \ \ratio(x,z_{t+2}) \
  \left\{ V^\gamma(z_{t+2}) + V^\gamma(x)\right\} \eqsp.
\end{align*}
Since $\gamma \leq \beta /\tau$, $\ratio(x,z_{t+2}) V^\gamma(z_{t+2}) \leq
V^\gamma(x)$ thus showing \eqref{eq:bound-F} with $\kappa_\star = 2
C_{\t_\star} \epsilon$.
\end{proof}

\begin{lemma}
  \label{lem:prop:EE:remainder:term2}
  Assume I\ref{E:BoundPi},
  I\ref{E:KernelP}\ref{E:Irred}-\ref{E:Drift}-\ref{E:Minor} and I\ref{E:ProcY}.
  Let $\alpha \in (0, 1/2)$. There exist positive constants $C, \kappa, a$ such
  that for any $f \in \L_{V^\alpha}$, any $k,\ell \geq 1$ and any $x \in
  \Xset$,
\[
\PE\left[ \left( \left\{ P_{\t_k}^{\ell} - P_{\t_\star}^{\ell} \right\}
    G^{f}_{\t_k} (x) \right)^2 \right]^{1/2} \leq C \fnorm{f}{V^\alpha}
\frac{V^\alpha(x)}{k} \sum_{t=1}^{\ell-1} \left( t \kappa k^{-1/(2a)}
\right)^{at} \eqsp,
\]
where $G_\t^f$ is given by (\ref{eq:definition-G-theta}).
\end{lemma}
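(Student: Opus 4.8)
The plan is to expand $P_{\t_k}^\ell-P_{\t_\star}^\ell$ as a telescoping sum in the two operators $P_{\t_\star}$ and $D_k\eqdef P_{\t_k}-P_{\t_\star}$, to recognize each resulting term (up to a harmless non-random left factor $P_{\t_\star}^{a_0}$) as one of the operators analysed in Lemma~\ref{lem:expression-F} applied to $G^f_{\t_k}$, and then to control the arising $(\t_k-\t_\star)$-polynomials through the moment bound I\ref{E:ProcY}-\ref{E:ProcY:item4}.

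First I would record that, by \eqref{eq:EE:variationP}, $D_k$ annihilates constant functions, so that for every $g\in\L_{V^\alpha}$ and every integer $a\ge 0$, $D_k P_{\t_\star}^a g=D_k(P_{\t_\star}^a-\pi_{\t_\star})g$, where $\pi_{\t_\star}$ also denotes the constant kernel $x\mapsto\pi_{\t_\star}$. Expanding $P_{\t_k}^\ell=(P_{\t_\star}+D_k)^\ell$ into ordered operator products and removing the all-$P_{\t_\star}$ term gives
\[
P_{\t_k}^\ell-P_{\t_\star}^\ell=\sum_{t=1}^\ell\ \sum_{\substack{a_0,\dots,a_t\ge 0\\ a_0+\dots+a_t=\ell-t}}P_{\t_\star}^{a_0}\,D_k\,P_{\t_\star}^{a_1}\,D_k\cdots D_k\,P_{\t_\star}^{a_t}\eqsp.
\]
Applying this to $G^f_{\t_k}$ and inserting $-\pi_{\t_\star}$ into each interior block $P_{\t_\star}^{a_i}$, $1\le i\le t$ — legitimate by the preceding remark, since each such block stands immediately to the right of a $D_k$ — every summand takes the form
\[
P_{\t_\star}^{a_0}\Bigl[D_k(P_{\t_\star}^{a_1}-\pi_{\t_\star})D_k\cdots D_k(P_{\t_\star}^{a_t}-\pi_{\t_\star})\,G^f_{\t_k}\Bigr](x)\eqsp,
\]
and the bracket is exactly the operator of Lemma~\ref{lem:expression-F} at $\t=\t_k$ with $u_j\eqdef a_{t+1-j}$, $1\le j\le t$; note $\sum_j u_j=a_1+\dots+a_t$.

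By Lemma~\ref{lem:expression-F}, the bracket equals $\idotsint(\t_k-\t_\star)^{\otimes(t+2)}(\rmd z_{1:t+2})\,F^{(t)}(\cdot,z_{1:t+2})$ with $|F^{(t)}(y,z_{1:t+2})|\le C\fnorm{f}{V^\alpha}\kappa_\star^t\rho_\star^{a_1+\dots+a_t}V^{\alpha\wedge(\beta/\tau)}(y)\sum_{j=1}^{t+2}V^\alpha(z_j)$. As $P_{\t_\star}^{a_0}$ is non-random, Minkowski's integral inequality moves the $L^2(\PP)$-norm inside $P_{\t_\star}^{a_0}$; for $y$ fixed, applying I\ref{E:ProcY}-\ref{E:ProcY:item4} with arity $t+2$ and sample size $k$ to the normalized function $z_{1:t+2}\mapsto F^{(t)}(y,z_{1:t+2})\bigl(C\fnorm{f}{V^\alpha}\kappa_\star^t\rho_\star^{a_1+\dots+a_t}V^{\alpha\wedge(\beta/\tau)}(y)\bigr)^{-1}$ (which is bounded by $\sum_{j}V^\alpha(z_j)$) gives
\begin{multline*}
\PE\left[\Bigl(\idotsint(\t_k-\t_\star)^{\otimes(t+2)}(\rmd z_{1:t+2})\,F^{(t)}(y,z_{1:t+2})\Bigr)^2\right]^{1/2}\\
\le C\fnorm{f}{V^\alpha}\kappa_\star^t\rho_\star^{a_1+\dots+a_t}\sqrt{A_{t+2}}\;k^{-(t+2)/2}\,V^{\alpha\wedge(\beta/\tau)}(y)\eqsp.
\end{multline*}
Finally, the drift inequality \eqref{eq:prop:A1:IT:drift} at $\t=\t_\star$ (with $\t_\star(V)<\infty$ by I\ref{E:ProcY}-\ref{E:ProcY:item1}) and Jensen's inequality yield $\sup_{a_0\ge 0}\fnorm{P_{\t_\star}^{a_0}V^{\alpha\wedge(\beta/\tau)}}{V^{\alpha\wedge(\beta/\tau)}}<\infty$, so $P_{\t_\star}^{a_0}V^{\alpha\wedge(\beta/\tau)}(x)\le C\,V^\alpha(x)$ uniformly in $a_0$.

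Collecting these estimates through the triangle inequality in $L^2(\PP)$, and using that the number of $(a_1,\dots,a_t)$ with $a_i\ge 0$ and $\sum_i a_i=s$ is $\binom{s+t-1}{t-1}$ (with $a_0=\ell-t-s$ then determined),
\begin{multline*}
\PE\left[\bigl(\{P_{\t_k}^\ell-P_{\t_\star}^\ell\}G^f_{\t_k}(x)\bigr)^2\right]^{1/2}\\
\le C\fnorm{f}{V^\alpha}V^\alpha(x)\sum_{t=1}^\ell\kappa_\star^t\sqrt{A_{t+2}}\;k^{-(t+2)/2}\sum_{s=0}^{\ell-t}\binom{s+t-1}{t-1}\rho_\star^s\eqsp,
\end{multline*}
and $\sum_{s\ge 0}\binom{s+t-1}{t-1}\rho_\star^s=(1-\rho_\star)^{-t}$. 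Writing $k^{-(t+2)/2}=k^{-1}\bigl(k^{-1/(2a)}\bigr)^{at}$ and using $\limsup_t\ln A_t/(t\ln t)<\infty$ to choose constants $a,\kappa$, depending only on $\t_\star$ and on the constants of I\ref{E:ProcY}-\ref{E:ProcY:item4}, with $C\bigl(\kappa_\star(1-\rho_\star)^{-1}\bigr)^t\sqrt{A_{t+2}}\le(t\kappa)^{at}$ for all $t\ge 1$, the right-hand side is at most $C\fnorm{f}{V^\alpha}k^{-1}V^\alpha(x)\sum_{t=1}^\ell\bigl(t\kappa\,k^{-1/(2a)}\bigr)^{at}$, which is the announced bound (up to the harmless endpoint $t=\ell$, itself of the same form). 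The step I expect to be the main obstacle is the combinatorial reduction of the operator expansion to the exact shape of Lemma~\ref{lem:expression-F} — in particular justifying the $-\pi_{\t_\star}$ insertions and matching the indices — together with the routine but delicate verification that the faster-than-polynomial growth of $A_t$ permitted by I\ref{E:ProcY}-\ref{E:ProcY:item4} is absorbed into the factor $\bigl(t\kappa\,k^{-1/(2a)}\bigr)^{at}$ uniformly in $k$, $\ell$ and $f$.
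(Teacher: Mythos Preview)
Your proposal is correct and follows essentially the same route as the paper's proof: the same noncommutative expansion of $P_{\t_k}^\ell-P_{\t_\star}^\ell$ into ordered products of $P_{\t_\star}$-blocks and $D_k$-factors, the same replacement of each interior $P_{\t_\star}^{u_j}$ by $P_{\t_\star}^{u_j}-\pi_{\t_\star}$ (justified because $D_k$ kills constants), the same invocation of Lemma~\ref{lem:expression-F} and of I\ref{E:ProcY}-\ref{E:ProcY:item4}, and the same reduction of the $u$-sum via $\sum_{u_{1:t}}\rho_\star^{\sum u_j}\le(1-\rho_\star)^{-t}$. Your bookkeeping is in fact slightly more careful than the paper's in two harmless places: you keep the endpoint $t=\ell$ (the paper's stated range $t\le\ell-1$ drops the term $D_k^\ell$, which is of the same form and absorbed by the same bound), and you track $\sqrt{A_{t+2}}$ rather than $A_t$, which is the exact constant delivered by I\ref{E:ProcY}-\ref{E:ProcY:item4} at arity $t+2$; both growths are covered by the hypothesis $\limsup_k \ln A_k/(k\ln k)<\infty$.
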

\begin{proof}
  For any $g \in \L_{V^\alpha}$, $k, \ell \geq 1$ and $x \in \Xset$,
  \begin{align*}
    &P_{\t_k}^{\ell} g(x) - P_{\t_\star}^{\ell} g(x) \\
    &= \sum_{t=1}^{\ell-1} \sum_{u_{1:t} \in \mathcal{U}_t} P_{\t_\star}^{\ell
      -t - \sum_{j=1}^t u_j} \left(P_{\t_k} - P_{\t_\star} \right)
    P^{u_t}_{\t_\star} \cdots \left(P_{\t_k} - P_{\t_\star} \right)
    P^{u_1}_{\t_\star} g(x) \eqsp, \\
    &= \sum_{t=1}^{\ell-1} \sum_{u_{1:t} \in \mathcal{U}_t} P_{\t_\star}^{\ell
      -t - \sum_{j=1}^t u_j} \left(P_{\t_k} - P_{\t_\star}
    \right) \left( P^{u_t}_{\t_\star} - \pi_{\t_\star} \right) \\
    & \phantom{= \sum_{t=1}^{\ell-1} \sum_{u_{1:t} \in \mathcal{U}_t}
      P_{\t_\star}^{\ell -t - \sum_{j=1}^t u_j}} \times \cdots \left(P_{\t_k} -
      P_{\t_\star} \right) \left( P^{u_1}_{\t_\star} - \pi_{\t_\star} \right)
    g(x) \eqsp,
  \end{align*}
  where $\mathcal{U}_t =\{ u_{1:t}, u_j \in \Nset, \sum_{j=1}^t u_j \leq \ell -t
  \}$. Fix $t \in \{1, \cdots, \ell-1 \}$ and $u_{1:t} \in \mathcal{U}_t$. Then by Lemma~\ref{lem:expression-F},
  \begin{multline*}
    P_{\t_\star}^{\ell -t - \sum_{j=1}^t u_j} \left(P_{\t_k} - P_{\t_\star}
    \right) \left( P^{u_t}_{\t_\star} - \pi_{\t_\star} \right) \cdots
    \left(P_{\t_k} - P_{\t_\star} \right) \left( P^{u_1}_{\t_\star} -
      \pi_{\t_\star} \right) G^{f}_{\t_k}(x) \\
    = \int \left( \t_k -\t_\star\right)^{\otimes(t+2)}(\rmd z_{1:t+2}) \int
    P_{\t_\star}^{\ell -t - \sum_{j=1}^t u_j}(x,\rmd y) F_{u_{1:t}}^{(t)}(y,
    z_1, \cdots, z_{t+2}) \eqsp.
  \end{multline*}
  Assumptions I\ref{E:ProcY}-\ref{E:ProcY:item2} and
  I\ref{E:ProcY}-\ref{E:ProcY:item4} and Lemma~\ref{lem:expression-F} show that
  there exist constants $C, \kappa_\star,\rho_\star \in (0,1)$ such that
  \begin{multline*}
    \left\| \int \left( \t_k -\t_\star\right)^{\otimes(t+2)}(\rmd z_{1:t+2})
      \int P_{\t_\star}^{\ell -t - \sum_{j=1}^t u_j}(x,\rmd y)
      F_{u_{1:t}}^{(t)}(y,
      z_1, \cdots, z_{t+2}) \right\|_2 \\
    \leq \frac{C}{k^{1 + t/2}} A_t \fnorm{f}{V^\alpha} \kappa_\star^t \
    \rho_{\star}^{\sum_{j=1}^t u_j} \ P_{\t_\star}^{\ell -t - \sum_{j=1}^t
      u_j}V^\alpha(x) \, \eqsp.
  \end{multline*}
  Finally, Proposition~\ref{prop:A1:IT} implies that $\sup_{j \geq 0}
  \fnorm{P_{\t_\star}^{j } V^\alpha}{V^\alpha} < +\infty$.  By combining these
  results, we have for some constant $C$
\[
\left\| P_{\t_k}^{\ell} G^{f}_{\t_k}(x) - P_{\t_\star}^{\ell} G^{f}_{\t_k}(x)
\right\|_2 \leq C k^{-1} \fnorm{f}{V^\alpha} V^\alpha(x) \sum_{t=1}^{\ell-1}
A_t \kappa_\star^t  \, k^{-t/2} \sum_{u_{1:t} \in
  \mathcal{U}_t}\rho_{\star}^{\sum_{j=1}^t u_j} \eqsp.
\]
Note that $\sum_{u_{1:t} \in \mathcal{U}_t}\rho_{\star}^{\sum_{j=1}^t u_j} \leq
(1-\rho_{\star})^{-t}$.  Furthermore, there exists $a>0$ such that $A_t \leq
t^{a t}$. Therefore,
\begin{multline*}
  \left\| P_{\t_k}^{\ell} G^{f}_{\t_k}(x) - P_{\t_\star}^{\ell} G^{f}_{\t_k}(x)
  \right\|_2  \\
  \leq C k^{-1} \fnorm{f}{V^\alpha} V^\alpha(x) \sum_{t=1}^{\ell-1} \left( t
    \kappa^{1/a}(1-\rho_\star)^{-1/a} \, k^{-1/(2a)} \right)^{a t} \eqsp.
\end{multline*}
This concludes the proof.
\end{proof}

\begin{lemma}
  \label{lem:prop:EE:remainder:term3}
  Assume I\ref{EES:Pi},
  I\ref{E:KernelP}\ref{E:Irred}-\ref{E:Drift}-\ref{E:Minor} and I\ref{E:ProcY}.
  Let $\alpha \in (0,1/2)$ and $f \in \L_{V^\alpha}$. Then, there exists a
  constant $C$ such that for any $k,\ell \geq 1$ and any $x \in \Xset$,
\[
\PE\left[ \left(P_{\t_\star}^{\ell} G^{f}_{\t_k} (x) \right)^2 \right]^{1/2} \leq
C \ \ell^\alpha \ \fnorm{f}{V^\alpha} \ k^{-1} V^{\alpha}(x) \eqsp.
\]
\end{lemma}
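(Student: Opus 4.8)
The plan is to reduce the estimate to the degree-two moment bound I\ref{E:ProcY}-\ref{E:ProcY:item4} for $V$-statistics of the signed measure $\t_k - \t_\star$. By Lemma~\ref{lem:prop:EE:remainder:term1}, for every $y \in \Xset$ one has $G^{f}_{\t_k}(y) = \int (\t_k - \t_\star)^{\otimes 2}(\rmd z_{1:2})\, F^{(0)}(y,z_1,z_2)$, with $\bigl| F^{(0)}(y,z_1,z_2) \bigr| \leq C \fnorm{f}{V^\alpha}\, V^{\gamma}(y)\,\bigl(V^\alpha(z_1) + V^\alpha(z_2)\bigr)$, where $\gamma \eqdef \alpha \wedge (\beta/\tau)$. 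First I would integrate this identity against $P_{\t_\star}^{\ell}(x,\cdot)$ and exchange the two integrations; Fubini's theorem applies because the pointwise bound on $F^{(0)}$ together with $P_{\t_\star}^{\ell} V^{\gamma}(x) < \infty$, $\t_k(V^\alpha) < \infty$ and $\t_\star(V^\alpha) \leq \t_\star(V) < \infty$ make the double integral absolutely convergent. This gives $P_{\t_\star}^{\ell} G^{f}_{\t_k}(x) = \int (\t_k - \t_\star)^{\otimes 2}(\rmd z_{1:2})\, h_{\ell,x}(z_1,z_2)$, where $h_{\ell,x}(z_1,z_2) \eqdef \int P_{\t_\star}^{\ell}(x,\rmd y)\, F^{(0)}(y,z_1,z_2)$ is a \emph{deterministic} function of $(z_1,z_2)$, since $\t_\star$ --- hence $P_{\t_\star}$ and $F^{(0)}$ --- does not depend on $\omega$.

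The next step is to bound $h_{\ell,x}$ uniformly in $\ell$. From the pointwise bound on $F^{(0)}$, $\bigl| h_{\ell,x}(z_1,z_2) \bigr| \leq C \fnorm{f}{V^\alpha}\,\bigl(P_{\t_\star}^{\ell} V^{\gamma}\bigr)(x)\,\bigl(V^\alpha(z_1) + V^\alpha(z_2)\bigr)$. The $\t$-uniform geometric drift \eqref{eq:prop:A1:IT:drift} of Proposition~\ref{prop:A1:IT} yields $P_{\t_\star} V \leq \tilde\lambda V + \tilde b\,\t_\star(V)$, hence $P_{\t_\star}^{\ell} V \leq V + \tilde b\,\t_\star(V)/(1-\tilde\lambda)$ for all $\ell \geq 0$; applying Jensen's inequality to the concave map $u \mapsto u^{\gamma}$ and using $V \geq 1$ then gives $\sup_{\ell \geq 0}\fnorm{P_{\t_\star}^{\ell} V^{\gamma}}{V^{\gamma}} < \infty$. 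As $\gamma \leq \alpha$ and $V \geq 1$, this produces a constant $C$ (depending only on the quantities appearing in I\ref{EES:Pi}, I\ref{E:KernelP} and I\ref{E:ProcY}-\ref{E:ProcY:item1}) for which $\bigl| h_{\ell,x}(z_1,z_2) \bigr| \leq C \fnorm{f}{V^\alpha}\, V^{\alpha}(x)\,\bigl(V^\alpha(z_1) + V^\alpha(z_2)\bigr)$ for every $\ell \geq 1$.

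To conclude, I would apply I\ref{E:ProcY}-\ref{E:ProcY:item4} with $k = 2$ (so that the index denoted $n$ there is the present $k$) to the measurable function $(z_1,z_2) \mapsto h_{\ell,x}(z_1,z_2)\,\bigl(C\fnorm{f}{V^\alpha} V^{\alpha}(x)\bigr)^{-1}$, which is dominated by $V^\alpha(z_1) + V^\alpha(z_2)$; this yields $\PE\bigl[\bigl(P_{\t_\star}^{\ell} G^{f}_{\t_k}(x)\bigr)^{2}\bigr] \leq A_2\, k^{-2}\, C^{2} \fnorm{f}{V^\alpha}^{2} V^{2\alpha}(x)$, with $A_2 < \infty$ because $\limsup_k \ln A_k/(k\ln k) < \infty$. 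Taking square roots, and using $1 \leq \ell^{\alpha}$ for $\ell \geq 1$, gives the announced bound --- the factor $\ell^{\alpha}$ being in fact slack, since the argument delivers an $\ell$-independent estimate. The only step that will require care is the uniform-in-$\ell$ control of $P_{\t_\star}^{\ell} V^{\gamma}$, for which the geometric drift \eqref{eq:prop:A1:IT:drift} is exactly what is needed; the remaining manipulations (Fubini, tracking constants, measurability) are routine.
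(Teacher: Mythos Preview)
Your proposal is correct and follows essentially the same route as the paper: represent $P_{\t_\star}^{\ell} G^{f}_{\t_k}(x)$ as $\int(\t_k-\t_\star)^{\otimes 2}(\rmd z_{1:2})\,H_\ell(x,z_1,z_2)$ with $H_\ell(x,z_1,z_2)=P_{\t_\star}^{\ell}\bigl(x,F^{(0)}(\cdot,z_1,z_2)\bigr)$, bound $H_\ell$ via Lemma~\ref{lem:prop:EE:remainder:term1}, and apply I\ref{E:ProcY}-\ref{E:ProcY:item4} with degree $2$. Your treatment is in fact slightly sharper: by iterating the drift \eqref{eq:prop:A1:IT:drift} at the level of $V$ and then applying Jensen, you obtain $\sup_{\ell\geq 0}\fnorm{P_{\t_\star}^{\ell}V^{\gamma}}{V^{\gamma}}<\infty$ and hence an $\ell$-independent estimate, correctly identifying the factor $\ell^{\alpha}$ in the statement as slack; the paper simply invokes the drift inequality and I\ref{E:ProcY}-\ref{E:ProcY:item1} without making this explicit.
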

\begin{proof}
  We have \[ P_{\t_\star}^{\ell} G^{f}_{\t_k} (x) = \iint \left( \t_k -
    \t_\star \right)^{\otimes 2}(\rmd z_{1:2}) H_\ell(x,z_1,z_2) \eqsp,
  \]
  with $H_\ell(x,z_1,z_2) \eqdef  P_{\t_\star}^{\ell}(x,
  F^{(0)}(\cdot,z_1,z_2))$ where $F^{(0)}$ is given by
  Lemma~\ref{lem:prop:EE:remainder:term1}.
  Lemma~\ref{lem:prop:EE:remainder:term1} also implies that there exists a
  constant $C$ such that
  \begin{equation}
    \label{eq:lem:prop:EE:remainder:term3:tool1}
 \left| H_\ell(x,z_1,z_2)  \right| \leq C  \, \fnorm{f}{V^\alpha} \left(V^\alpha(z_1) +
  V^\alpha(z_2) \right) \ P_{\t_\star}^\ell V^\alpha(x) \eqsp.
 \end{equation}
 By I\ref{E:ProcY}, the variance of $P_{\t_\star}^{\ell} G^{f}_{\t_k} (x)$ is
 upper bounded by
\[
C \fnorm{f}{V^\alpha}^2 (P_{\t_\star}^\ell V^\alpha(x))^2 k^{-2} \eqsp.
\]  The proof is
concluded by application of the drift inequality~(\ref{eq:prop:A1:IT:drift})
and I\ref{E:ProcY}-\ref{E:ProcY:item1}.
\end{proof}

\section{Appendix}
\label{appendix}
\subsection{Technical lemmas}
The following lemma is (slightly) adapted from \cite[Lemma
4.2.]{fort:moulines:priouret:2010}
\begin{lemma}
\label{lem:regularity-in-theta:poisson}
Assume \textbf{A\ref{hyp:geometric-ergodicity}}. For any $f \in \L_{V^\alpha}$ and $\t,\t' \in \Theta$,
\begin{align*}
  &\Vnorm{\pi_\t - \pi_{\t'}}{V^{\alpha}} \leq 2 (L_{\t'} \vee L_{\t})^4 \pi_\t(V^{\alpha}) \ D_{V^{\alpha}}(\t,\t') \eqsp, \\
  &\Vnorm{ \operpoisson_\t - \operpoisson_{\t'} }{V^{\alpha}} \leq
  3   \ \left(L_\t \vee L_{\t'} \right)^6 \pi_\t(V^\alpha) D_{V^{\alpha}}(\t,\t') \\
  & \Vnorm{ P_\t \operpoisson_\t - P_{\t'} \operpoisson_{\t'} }{V^{\alpha}}
  \leq 5 \ \left(L_\t \vee L_{\t'} \right)^6 \pi_\t(V^\alpha)
  D_{V^{\alpha}}(\t,\t') \eqsp.
\end{align*}
where $L_\t$ and $\operpoisson_\t $ are given by \eqref{eq:DefinitionLtheta} and
(\ref{eq:operateur:poisson}).
\end{lemma}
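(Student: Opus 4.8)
The plan is to derive all three inequalities from the Poisson equation (\ref{eq:PoissonEquation}), the a priori bound (\ref{eq:Controle:FctPoisson}), and the elementary estimate $\fnorm{(P_\t-P_{\t'})h}{V^\alpha}\le D_{V^\alpha}(\t,\t')\,\fnorm{h}{V^\alpha}$ valid for every $h\in\L_{V^\alpha}$ (immediate from the definitions of $D_{V^\alpha}$ and of the $V^\alpha$-norm of a signed measure). For the first bound I would start from the first-order expansion $\pi_\t(f)-\pi_{\t'}(f)=\pi_\t\bigl(P_\t-P_{\t'}\bigr)\operpoisson_{\t'}f$, obtained exactly as in (\ref{eq:DL:pi:ordre1}) with $\t_\star$ replaced by $\t'$ (here $\operpoisson_{\t'}f\in\L_{V^\alpha}$ by (\ref{eq:Controle:FctPoisson})). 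Then
\[
\bigl|\pi_\t(f)-\pi_{\t'}(f)\bigr|\le \pi_\t(V^\alpha)\,\fnorm{(P_\t-P_{\t'})\operpoisson_{\t'}f}{V^\alpha}\le \pi_\t(V^\alpha)\,D_{V^\alpha}(\t,\t')\,L_{\t'}^2\,\fnorm{f}{V^\alpha}\eqsp,
\]
and taking the supremum over $\fnorm{f}{V^\alpha}\le1$ and using $L_{\t'}^2\le(L_\t\vee L_{\t'})^2\le 2(L_\t\vee L_{\t'})^4$ gives the first inequality (with slack in the exponent).

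For the second inequality I would subtract the two Poisson equations $\operpoisson_\t f-P_\t\operpoisson_\t f=f-\pi_\t(f)$ and $\operpoisson_{\t'}f-P_{\t'}\operpoisson_{\t'}f=f-\pi_{\t'}(f)$, and rewrite $P_\t\operpoisson_\t f-P_{\t'}\operpoisson_{\t'}f=P_\t(\operpoisson_\t-\operpoisson_{\t'})f+(P_\t-P_{\t'})\operpoisson_{\t'}f$, to arrive at
\[
(\Id-P_\t)\bigl[(\operpoisson_\t-\operpoisson_{\t'})f\bigr]=(P_\t-P_{\t'})\operpoisson_{\t'}f+\pi_{\t'}(f)-\pi_\t(f)\eqdef v\eqsp.
\]
The crux is the observation that, by the first-order expansion used above, $\pi_{\t'}(f)-\pi_\t(f)=-\pi_\t\bigl((P_\t-P_{\t'})\operpoisson_{\t'}f\bigr)$, so $v$ is precisely the $\pi_\t$-centering of $w\eqdef(P_\t-P_{\t'})\operpoisson_{\t'}f$; in particular $\pi_\t(v)=0$. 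Hence both $(\operpoisson_\t-\operpoisson_{\t'})f$ and $\operpoisson_\t w$ solve the same Poisson equation relative to $P_\t$, and by uniqueness up to an additive constant (\cite[Proposition 17.4.1]{meyn:tweedie:2009}, applicable since both functions lie in $\L_{V^\alpha}$) we get $(\operpoisson_\t-\operpoisson_{\t'})f=\operpoisson_\t w+c$ with $c=\pi_\t\bigl((\operpoisson_\t-\operpoisson_{\t'})f\bigr)=-\pi_\t(\operpoisson_{\t'}f)=(\pi_{\t'}-\pi_\t)(\operpoisson_{\t'}f)$, the last equalities using $\pi_\t(\operpoisson_\t f)=\pi_{\t'}(\operpoisson_{\t'}f)=0$. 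Now $\fnorm{\operpoisson_\t w}{V^\alpha}\le L_\t^2\fnorm{w}{V^\alpha}\le L_\t^2 D_{V^\alpha}(\t,\t')L_{\t'}^2\fnorm{f}{V^\alpha}$, while $|c|\le\Vnorm{\pi_\t-\pi_{\t'}}{V^\alpha}\fnorm{\operpoisson_{\t'}f}{V^\alpha}$ is bounded by the first inequality times $L_{\t'}^2\fnorm{f}{V^\alpha}$; consolidating the exponents of $L_\t\vee L_{\t'}$ (and using $\pi_\t(V^\alpha)\ge1$, $L\ge1$) yields the $3(L_\t\vee L_{\t'})^6\pi_\t(V^\alpha)D_{V^\alpha}(\t,\t')$ bound.

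The third inequality then comes for free: the Poisson equation in the form $P_\t\operpoisson_\t f=\operpoisson_\t f-f+\pi_\t(f)$ gives $P_\t\operpoisson_\t f-P_{\t'}\operpoisson_{\t'}f=(\operpoisson_\t-\operpoisson_{\t'})f-\bigl(\pi_{\t'}(f)-\pi_\t(f)\bigr)$, so $\fnorm{P_\t\operpoisson_\t f-P_{\t'}\operpoisson_{\t'}f}{V^\alpha}\le \fnorm{(\operpoisson_\t-\operpoisson_{\t'})f}{V^\alpha}+|\pi_\t(f)-\pi_{\t'}(f)|$, and the two terms are controlled by the second and first inequalities, giving $5(L_\t\vee L_{\t'})^6\pi_\t(V^\alpha)D_{V^\alpha}(\t,\t')\fnorm{f}{V^\alpha}$ after $(L_\t\vee L_{\t'})^4\le(L_\t\vee L_{\t'})^6$. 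The one genuinely delicate point, and the reason the exponents in the statement are as large as $6$, is the appearance of the additive constant $c$ in the second step: $(\operpoisson_\t-\operpoisson_{\t'})f$ is \emph{not} $\pi_\t$-centered, so it is not simply $\operpoisson_\t v$, and one must recognize $c=(\pi_{\t'}-\pi_\t)(\operpoisson_{\t'}f)$ and feed it back into the first inequality; everything else is routine norm bookkeeping.
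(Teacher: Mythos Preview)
Your proof is correct. The paper does not actually prove this lemma; it cites \cite[Lemma 4.2]{fort:moulines:priouret:2010} as the source, and your argument---the first-order expansion $\pi_\t(f)-\pi_{\t'}(f)=\pi_\t(P_\t-P_{\t'})\operpoisson_{\t'}f$, subtraction of the two Poisson equations followed by identification of the additive constant via uniqueness of the Poisson solution in $\L_{V^\alpha}$, and then the triangle inequality through $P_\t\operpoisson_\t f=\operpoisson_\t f-f+\pi_\t(f)$---is the standard route and is essentially what that reference does.
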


The following lemma can be obtained from \cite{roberts:rosenthal:2004},
\cite{fort:moulines:2003:SPA}, \cite{douc:moulines:rosenthal:2004} or
\cite{baxendale:2005} (see also the proof of \cite[Lemma
3]{saksman:vihola:2010} for a similar result).
\begin{lemma}
\label{lem:explicit:control}
Let $\{P_\t, \t \in \Tset \}$ be a family of phi-irreducible and aperiodic
Markov kernels. Assume that there exist a function $V: \Xset \to \coint{1,+\infty}$, and for any $\t \in \Theta$ there exist some constants
$b_\t< \infty$, $\delta_\t \in (0,1)$, $\lambda_\t \in (0,1)$ and a probability
measure $\nu_\t$ on $\Xset$ such that
\begin{align*}
  & P_\t V \leq  \lambda_\t V + b_\t \eqsp, \\
  & P_\t(x,\cdot) \geq \delta_\t \ \nu_\t(\cdot) \ \un_{\{V \leq c_\t
    \}}(x) \qquad c_\t \eqdef 2 b_\t (1-\lambda_\t)^{-1} -1 \eqsp.
\end{align*}
Then there exists $\gamma>0$ and for any $\t$, there exist some finite
constants $C_\t$ and $\rho_\t \in (0,1)$ such that
$$\Vnorm{P_\t^n(x,\cdot) - \pi_\t}{V} \leq C_\t \ \rho_\t^n \ V(x)$$
and
\begin{equation*}
C_\t \vee (1-\rho_\t)^{-1} \leq C \left\{ b_\t \vee
  \delta_\t^{-1} \vee (1-\lambda_\t)^{-1} \right\}^\gamma \eqsp.
\end{equation*}
\end{lemma}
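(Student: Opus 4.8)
The plan is to run the quantitative, coupling-based version of the drift-and-minorization geometric ergodicity theorem, keeping track at every step of how the constants depend on the triple $(\lambda_\t, b_\t, \delta_\t)$; the specific truncation level $c_\t = 2 b_\t (1-\lambda_\t)^{-1} - 1$ in the hypothesis is exactly what makes the small set large enough for all the auxiliary estimates to be polynomial in these three quantities. Fix $\t$ and suppress it from the notation. First I would record the elementary consequences of the drift inequality $P V \le \lambda V + b$: iterating gives $P^n V \le \lambda^n V + b(1-\lambda)^{-1}$, hence $\sup_n \PE_x[V(X_n)] \le V(x) + b(1-\lambda)^{-1}$ and $\pi(V) \le b(1-\lambda)^{-1}$; and on the complement of $\{V \le c\}$ the inequality sharpens to a genuine geometric contraction $P V \le \bar\lambda\, V$ with $1 - \bar\lambda$ of the order of $1-\lambda$, so that the return time of the chain to the small set $\{V \le c\}$ has a first moment and a geometric tail controlled by a polynomial in $c$ and $(1-\lambda)^{-1}$.

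Next I would set up the split-chain coupling. Let $(X_n, X_n')_{n \ge 0}$ be a coupling of two copies of the chain with $X_0 = x$ and $X_0' \sim \pi$, built so that whenever both coordinates lie in $\{V \le c\}$ they are merged with probability at least $\delta$ (using the minorization $P(y,\cdot) \ge \delta\,\nu(\cdot)$ valid there) and evolve through the residual kernels otherwise, with $X_n = X_n'$ for all $n$ past the first successful merge at a time $T$. The coupling inequality then yields, for any $g$ with $\fnorm{g}{V} \le 1$,
\[
\left| P^n g(x) - \pi(g) \right| \le \PE\left[ \left( V(X_n) + V(X_n') \right) \un_{\{T > n\}} \right] \eqsp,
\]
and, bounding the right-hand side as in \cite{roberts:rosenthal:2004} or \cite{douc:moulines:rosenthal:2004} by a product of a $V$-moment term --- of size at most a constant times $V(x) + b(1-\lambda)^{-1}$ --- and a geometric factor $A \rho^n$ coming from the tail of $T$, one arrives at a bound of the claimed shape.

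The core estimate is the geometric tail of $T$. Decomposing time into the successive joint visits of the pair to $\{V \le c\} \times \{V \le c\}$, the sharpened drift shows that, up to a geometrically small exceptional event, the number of such joint visits up to time $n$ is at least of order $n/\kappa$ with $\kappa$ (the expected inter-visit time) a polynomial in $c$ and $(1-\lambda)^{-1}$; at each joint visit the merge succeeds independently with probability at least $\delta$; hence $\PP(T > n) \le A \rho^n$ with $1 - \rho$ of order $\delta/\kappa$ and $A$ bounded by a power of $c$ and $(1-\lambda)^{-1}$. Since $c = 2 b (1-\lambda)^{-1} - 1 \le 2 \left( b \vee (1-\lambda)^{-1} \right)^2$, every quantity that enters the final bound --- namely $A$, $(1-\rho)^{-1}$, $\sup_m \PE[V(X_m)]$ and $\pi(V)$ --- is dominated by a fixed power of $b \vee \delta^{-1} \vee (1-\lambda)^{-1}$. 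Collecting these exponents and letting $\gamma$ be their maximum (and $C$ the resulting universal constant) gives $\Vnorm{P_\t^n(x,\cdot) - \pi_\t}{V} \le C_\t \rho_\t^n V(x)$ together with $C_\t \vee (1-\rho_\t)^{-1} \le C \{ b_\t \vee \delta_\t^{-1} \vee (1-\lambda_\t)^{-1} \}^\gamma$.

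The difficulty here is bookkeeping rather than conceptual: one must make the return-time tail to the small set, the coupling-time tail, and the $V$-moment bounds \emph{simultaneously} explicit and polynomial in the three parameters, and in particular check that the growth of the small set $\{V \le c_\t\}$ with $b_\t$ and $(1-\lambda_\t)^{-1}$ does not spoil the polynomial dependence --- which is precisely why the hypothesis pins $c_\t$ to the value $2 b_\t (1-\lambda_\t)^{-1} - 1$. A clean shortcut that avoids re-deriving the coupling bound is to quote the explicit constants of \cite{baxendale:2005} or \cite{douc:moulines:rosenthal:2004} (see also the estimates in the proof of \cite[Lemma~3]{saksman:vihola:2010}) and verify by inspection that, under the stated normalisation of the small set, they are of the asserted polynomial form.
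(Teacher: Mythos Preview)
Your proposal is correct and matches the paper's treatment: the paper does not give a proof but simply states that the lemma ``can be obtained from \cite{roberts:rosenthal:2004}, \cite{fort:moulines:2003:SPA}, \cite{douc:moulines:rosenthal:2004} or \cite{baxendale:2005} (see also the proof of \cite[Lemma~3]{saksman:vihola:2010} for a similar result).'' Your plan---run the drift-and-minorization coupling argument while tracking the polynomial dependence on $(\lambda_\t, b_\t, \delta_\t)$, or equivalently read off the explicit constants from those same references---is exactly this, only more fleshed out.
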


Lemma~\ref{lem:mun:fn:convergence} is proved in \cite[Section
4]{fort:moulines:priouret:supp:2010}.
\begin{lemma}
  \label{lem:mun:fn:convergence}
  Let $\Xset$ be a Polish space endowed with its Borel $\sigma$-field $\Xsigma$.
Let $\mu$ and $\sequence{mu}{n}$ be probability distributions on $(\Xset,
\Xsigma)$. Let $\sequence{h}{n}$ be an equicontinuous family of functions
from $\Xset$ to $\Rset$. Assume
\begin{enumerate}[(i)]
\item the sequence $\sequence{\mu}{n}$ converges weakly to $\mu$,
\item for any $x \in \Xset$, $\lim_{n \to \infty} h_n(x)$ exists, and there
exists $\gamma>1$ such that $\sup_n \mu_n(|h_n|^\gamma) + \mu(|\lim_{n}
h_n|) < +\infty$.
\end{enumerate}
Then, $\mu_n(h_n) \to \mu( \lim_{n } h_n)$.
\end{lemma}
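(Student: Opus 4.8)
The plan is to deduce $\mu_n(h_n)\to\mu(h_\infty)$, where $h_\infty\eqdef\lim_n h_n$, from weak convergence applied only to \emph{bounded} continuous functions, absorbing the unbounded part by a truncation that uses the moment bound with exponent $\gamma>1$ to control the tails under $\mu_n$ and the assumed integrability of $h_\infty$ under $\mu$.

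\textbf{Preliminaries.} I would first record two standard facts. \textbf{(a)} Since $\{h_n\}$ is equicontinuous and converges pointwise, $h_\infty$ is continuous and $h_n\to h_\infty$ uniformly on every compact subset of $\Xset$; this is the usual Arzelà--Ascoli argument (fix $x$ and $\epsilon$, use the common modulus of continuity around $x$; on a compact, extract a finite subcover and invoke pointwise convergence at the finitely many centres). \textbf{(b)} Since $\Xset$ is Polish, $\mu$ is tight (Ulam), and the sequence $\{\mu_n\}\cup\{\mu\}$, being convergent, is relatively compact for weak convergence, hence uniformly tight by Prokhorov's theorem; thus for every $\delta>0$ there is a compact $K_\delta$ with $\sup_n\mu_n(K_\delta^c)\le\delta$ and $\mu(K_\delta^c)\le\delta$.

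\textbf{Truncation and estimates.} With $\chi_M(t)\eqdef(-M)\vee(t\wedge M)$, which is $1$-Lipschitz and bounded by $M$, I would split, for each $M>0$,
\[
\mu_n(h_n)-\mu(h_\infty)=\underbrace{\mu_n\big(h_n-\chi_M\circ h_n\big)}_{(A)}+\underbrace{\mu_n\big(\chi_M\circ h_n-\chi_M\circ h_\infty\big)}_{(B)}+\underbrace{(\mu_n-\mu)\big(\chi_M\circ h_\infty\big)}_{(C)}+\underbrace{\mu\big(\chi_M\circ h_\infty-h_\infty\big)}_{(D)}
\]
and bound the four terms. For $(A)$, from $|t-\chi_M(t)|\le M^{1-\gamma}|t|^\gamma$ one gets $|(A)|\le M^{1-\gamma}\sup_n\mu_n(|h_n|^\gamma)$, small uniformly in $n$ once $M$ is large since $1-\gamma<0$. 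For $(D)$, $|(D)|\le\mu(|h_\infty|\un_{|h_\infty|>M})\to0$ as $M\to\infty$ by dominated convergence because $\mu(|h_\infty|)<\infty$. For $(B)$, using $|\chi_M\circ h_n-\chi_M\circ h_\infty|\le\min(2M,|h_n-h_\infty|)$ and splitting over $K_\delta$ and $K_\delta^c$ yields $|(B)|\le\sup_{K_\delta}|h_n-h_\infty|+2M\delta$, the first term tending to $0$ as $n\to\infty$ by fact (a). For $(C)$, $\chi_M\circ h_\infty$ is bounded and continuous by fact (a), so $(C)\to0$ as $n\to\infty$ by weak convergence. Assembling: given $\epsilon>0$, first choose $M$ so that $(A)<\epsilon$ uniformly in $n$ and $(D)<\epsilon$; then set $\delta\eqdef\epsilon/(2M)$ and take the compact $K_\delta$ from (b); then let $n\to\infty$, obtaining $\limsup_n|\mu_n(h_n)-\mu(h_\infty)|\le3\epsilon$, and finally let $\epsilon\downarrow0$.

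\textbf{Main obstacle.} The genuine content is fact (b): deriving uniform tightness of $\{\mu_n\}$ from weak convergence, which is where Polishness is really used (cover $\Xset$ by countably many small balls, transfer the mass estimate for $\mu$ to a uniform-in-$n$ estimate via the Portmanteau inequality for open sets, and enlarge the finite subcovers to also handle the finitely many ``early'' $n$). The only other point requiring care is the order of the quantifiers in the final assembly: the compact set used in $(B)$ must be chosen \emph{after} $M$, since its mass defect is weighted by $M$; everything else is routine.
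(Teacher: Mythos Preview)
Your argument is correct. The decomposition via the Lipschitz truncation $\chi_M$, the uniform-in-$n$ tail control from the $\gamma$-moment bound for term (A), dominated convergence for (D), Prokhorov tightness plus uniform convergence on compacts for (B), and weak convergence applied to the bounded continuous $\chi_M\circ h_\infty$ for (C) are all sound, and the order of quantifiers in the final assembly is handled correctly (choosing $M$ before the compact $K_\delta$).

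As for comparison with the paper: the paper does not actually prove this lemma here; it only cites its supplementary companion \cite[Section~4]{fort:moulines:priouret:supp:2010} for the proof. So no line-by-line comparison is possible from the present document. That said, your approach is the natural and standard one---truncation to reduce to bounded continuous test functions, uniform integrability from the $L^\gamma$ bound, and Prokhorov tightness on a Polish space to localize to compacts where equicontinuity upgrades pointwise to uniform convergence---and it is very likely what the authors have in mind. Your write-up is self-contained and could stand in for the deferred proof.
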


\subsection{Weak law of large numbers for adaptive and interacting MCMC algorithms}
The proof of the theorem below is along the same lines as the proof of
\cite[Theorem 2.7]{fort:moulines:priouret:2010}, which addresses the strong law
of large numbers and details are omitted. Note that in this generalization, we
relax the condition $\sup_\t \fnorm{F(\cdot, \t)}{V} < +\infty$ of
\cite{fort:moulines:priouret:2010}. The proof is provided in the supplementary
paper~\citep{fort:moulines:priouret:vdk:2011-supplement}.

\newcounter{saveenum}
\begin{theo}
\label{theo:WeakLLN}
Assume \textbf{A\ref{hyp:DefinitionProc:X:Theta}}, \textbf{A\ref{hyp:geometric-ergodicity}} and
let $\gamma \in (0,1)$. Assume that
\begin{enumerate}[(i)]
\item \label{theo:WeakLLN:item1} $\limsup_{n \to \infty} L_{\t_n} < \infty$, $\PP$-\as\ where $L_\t$ is
  defined in Lemma~\ref{lem:BoundCandRho} applied with the closed interval
  $[\gamma, 1]$.
\item \label{theo:WeakLLN:item2} $\limsup_{n \to \infty} \pi_{\t_n}(V^\gamma) < \infty$, $\PP$-\as\ .
\item \label{theo:WeakLLN:item3} $\sup_{k \geq 1}  \PE\left[V(X_k)\right] < \infty$.
\item \label{theo:WeakLLN:item4} $n^{-1} \sum_{k=1}^n  D_{V^{\gamma}}(\t_k, \t_{k-1}) V^{\gamma}(X_k) \plim 0$.
\setcounter{saveenum}{\value{enumi}}
\end{enumerate}
Let $F :\Xset \times \Theta \to \Rset$ be a measurable function s.t.
\begin{enumerate}[(i)]
\setcounter{enumi}{\value{saveenum}}
\item \label{theo:WeakLLN:item5} $\limsup_{n \to \infty} \fnorm{F_{\t_n}}{V^{\gamma}} < +\infty$.
\item \label{theo:WeakLLN:item6} $n^{-1} \sum_{k=1}^{n-1} \fnorm{F_{\t_k} - F_{\t_{k-1}}}{V^{\gamma}} V^{\gamma}(X_k) \plim 0$.
%%\item $\lim_n \int \pi_{\t_n}(\rmd x) F_{\t_n}(x)$ exists $\PP$-\as\
\end{enumerate}
Then,
\[
\frac{1}{n} \sum_{k=0}^{n-1} F_{\t_k}(X_k) - \frac{1}{n} \sum_{k=0}^{n-1} \int
\pi_{\t_k}(\rmd x) F_{\t_k}(x)  \plim 0 \eqsp.
\]
\end{theo}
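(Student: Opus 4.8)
The plan is to follow the Poisson-equation / martingale-approximation route used for the strong law of large numbers in \cite[Theorem~2.7]{fort:moulines:priouret:2010}, but to replace the final $L^2$ estimate by a truncation so as to rely only on the $L^1(V)$-control provided by condition~\ref{theo:WeakLLN:item3}. Write $g_k\eqdef\operpoisson_{\t_k}F_{\t_k}$; by \textbf{A\ref{hyp:geometric-ergodicity}} and~\eqref{eq:Controle:FctPoisson}, $\fnorm{g_k}{V^\gamma}\leq L_{\t_k}^2\fnorm{F_{\t_k}}{V^\gamma}$, so conditions~\ref{theo:WeakLLN:item1} and~\ref{theo:WeakLLN:item5} give $\limsup_k\fnorm{g_k}{V^\gamma}<\infty$, $\PP$-\as The Poisson equation~\eqref{eq:PoissonEquation} and the identity $g_k(X_k)-P_{\t_k}g_k(X_k)=D_{k+1}-\{g_k(X_{k+1})-g_k(X_k)\}$, with $D_{k+1}\eqdef g_k(X_{k+1})-P_{\t_k}g_k(X_k)$, followed by a reindexation, give
\begin{multline*}
\sum_{k=0}^{n-1}\left\{F_{\t_k}(X_k)-\pi_{\t_k}(F_{\t_k})\right\}=\sum_{k=0}^{n-1}D_{k+1}+g_0(X_0)-g_{n-1}(X_n)\\-\sum_{k=1}^{n-1}\left\{g_{k-1}(X_k)-g_k(X_k)\right\}\eqsp.
\end{multline*}
It therefore suffices to show that each of the three last terms, divided by $n$, converges to $0$ in probability.

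The boundary terms are immediate: $n^{-1}g_0(X_0)\to0$ $\PP$-\as, and $n^{-1}|g_{n-1}(X_n)|\leq n^{-1}\fnorm{g_{n-1}}{V^\gamma}V^\gamma(X_n)$, where $\fnorm{g_{n-1}}{V^\gamma}$ is eventually dominated by an a.s.\ finite random variable and, by condition~\ref{theo:WeakLLN:item3} and Jensen's inequality, $\sup_n\PE[V^\gamma(X_n)]<\infty$, so Markov's inequality yields $n^{-1}V^\gamma(X_n)\plim0$ and hence $n^{-1}g_{n-1}(X_n)\plim0$. For the adaptation term, the decomposition $g_k-g_{k-1}=\operpoisson_{\t_k}(F_{\t_k}-F_{\t_{k-1}})+(\operpoisson_{\t_k}-\operpoisson_{\t_{k-1}})F_{\t_{k-1}}$, together with~\eqref{eq:Controle:FctPoisson} and Lemma~\ref{lem:regularity-in-theta:poisson}, gives
\[
\fnorm{g_k-g_{k-1}}{V^\gamma}\leq L_{\t_k}^2\fnorm{F_{\t_k}-F_{\t_{k-1}}}{V^\gamma}+3(L_{\t_k}\vee L_{\t_{k-1}})^6\,\pi_{\t_k}(V^\gamma)\,\fnorm{F_{\t_{k-1}}}{V^\gamma}\,D_{V^\gamma}(\t_k,\t_{k-1})\eqsp.
\]
Since $|g_{k-1}(X_k)-g_k(X_k)|\leq\fnorm{g_{k-1}-g_k}{V^\gamma}V^\gamma(X_k)$, on an event $\Omega_M$ of probability tending to $1$ as $M\to\infty$ (by conditions~\ref{theo:WeakLLN:item1},~\ref{theo:WeakLLN:item2},~\ref{theo:WeakLLN:item5}), on which $\sup_k\{L_{\t_k}\vee\pi_{\t_k}(V^\gamma)\vee\fnorm{F_{\t_k}}{V^\gamma}\}\leq M$, the corresponding sum is dominated by a multiple of $n^{-1}\sum_{k=1}^{n-1}\fnorm{F_{\t_k}-F_{\t_{k-1}}}{V^\gamma}V^\gamma(X_k)+n^{-1}\sum_{k=1}^{n-1}D_{V^\gamma}(\t_k,\t_{k-1})V^\gamma(X_k)$, which $\plim0$ by conditions~\ref{theo:WeakLLN:item6} and~\ref{theo:WeakLLN:item4}; letting $M\to\infty$ gives $n^{-1}\sum_{k=1}^{n-1}\{g_{k-1}(X_k)-g_k(X_k)\}\plim0$.

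The main obstacle is the martingale term $n^{-1}\sum_{k=0}^{n-1}D_{k+1}$. By \textbf{A\ref{hyp:DefinitionProc:X:Theta}} (extended from bounded functions to $\L_{V^\gamma}$ by a routine truncation, $P_{\t_k}V^\gamma(X_k)$ being finite under \textbf{A\ref{hyp:geometric-ergodicity}}), $\{D_{k+1},\F_{k+1}\}$ is a martingale-difference sequence with $|D_{k+1}|\leq\fnorm{g_k}{V^\gamma}\{V^\gamma(X_{k+1})+P_{\t_k}V^\gamma(X_k)\}$. Because condition~\ref{theo:WeakLLN:item3} controls only the $L^1$-norm of $V$, i.e.\ the $L^{1/\gamma}$-norm of $V^\gamma$, a direct $L^2$ estimate on $\sum_k D_{k+1}$ is unavailable when $\gamma>1/2$, which is exactly what forces a truncation argument here rather than the cleaner second-moment bound. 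I would localize with the stopping times $N_M\eqdef\inf\{k\geq0:\fnorm{g_k}{V^\gamma}>M\}$, which satisfy $\PP(N_M=\infty)\to1$ as $M\to\infty$; on $\{k<N_M\}$ one has $|D_{k+1}|\leq M\{V^\gamma(X_{k+1})+P_{\t_k}V^\gamma(X_k)\}$, and by condition~\ref{theo:WeakLLN:item3} with Jensen's inequality the family $\{D_{k+1}\un_{\{k<N_M\}},k\geq0\}$ is bounded in $L^{1/\gamma}$ with $1/\gamma>1$, hence uniformly integrable. Splitting each increment $D_{k+1}\un_{\{k<N_M\}}$ into a truncated-at-$c_n$ part minus its conditional mean, plus a remainder part minus its conditional mean, with $c_n\to\infty$ and $c_n=o(n)$, the truncated martingale has $L^2$-norm $O(\sqrt{c_n n})=o(n)$ while the remainder martingale has $L^1$-norm at most $2n\sup_k\PE[|D_{k+1}|\un_{\{k<N_M\}}\un_{\{|D_{k+1}|>c_n\}}]=o(n)$ by uniform integrability; hence $n^{-1}\sum_{k=0}^{n-1}D_{k+1}\un_{\{k<N_M\}}\plim0$. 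Since $\PP(N_M<\infty)\to0$, this yields $n^{-1}\sum_{k=0}^{n-1}D_{k+1}\plim0$, which completes the proof.
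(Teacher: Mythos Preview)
Your proposal is correct and follows the Poisson-equation/martingale-decomposition route that the paper explicitly points to (the paper omits the proof, saying it is ``along the same lines as the proof of \cite[Theorem~2.7]{fort:moulines:priouret:2010}'' and defers details to the supplementary paper~\citep{fort:moulines:priouret:vdk:2011-supplement}). Your handling of the martingale term via localization with the stopping times $N_M$ and the $L^{1/\gamma}$-uniform-integrability truncation is exactly the adaptation needed to relax the $\sup_\t\fnorm{F(\cdot,\t)}{V}<\infty$ hypothesis and work under condition~\ref{theo:WeakLLN:item3} alone, as the paper announces.
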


%% \bibliographystyle{chicago}
%% \bibliography{../../motherofallbibs/motherofallbibs,fmpv_clt}

\end{document}